\DeclareMathOperator{\pr}{pr}
\DeclareMathOperator{\Ker}{Ker}
\DeclareMathOperator{\Image}{Im}
\DeclareMathOperator{\Hom}{Hom}
\DeclareMathOperator{\Gal}{Gal}
\DeclareMathOperator{\Frob}{Frob}
\DeclareMathOperator{\id}{id}
\DeclareMathOperator{\tr}{tr}
\DeclareMathOperator{\Spec}{Spec}
\DeclareMathOperator{\Res}{Res}
\DeclareMathOperator{\Br}{Br}
\DeclareMathOperator{\GL}{GL}
\DeclareMathOperator{\rank}{rank}
\DeclareMathOperator{\tame}{tame}
\DeclareMathOperator{\LS}{LS}
\newcommand{\ra}{\rightarrow}
\newcommand{\lra}{\longrightarrow}
\newcommand{\hra}{\hookrightarrow}
\newcommand{\N}{\mathbb{N}}
\newcommand{\Z}{\mathbb{Z}}
\newcommand{\Q}{\mathbb{Q}}
\newcommand{\R}{\mathbb{R}}
\newcommand{\C}{\mathbb{C}}
\newcommand{\A}{\mathbb{A}}
\newcommand{\F}{\mathbb{F}}
\newcommand{\calO}{\mathcal{O}}
\newcommand{\calE}{\mathcal{E}}
\newcommand{\calF}{\mathcal{F}}
\newcommand{\calG}{\mathcal{G}}
\newcommand{\fkm}{\mathfrak{m}}
\newcommand{\fkT}{\mathfrak{T}}
\theoremstyle{plain}
\newtheorem{thm}{Theorem}[section]
\newtheorem*{thm*}{Theorem}
\newtheorem{prop}[thm]{Proposition}
\newtheorem{lem}[thm]{Lemma}
\newtheorem{cor}[thm]{Corollary}
\newtheorem{conj}[thm]{Conjecture}
\theoremstyle{definition}
\newtheorem{defn}[thm]{Definition}
\theoremstyle{remark}
\newtheorem{rem}[thm]{Remark}
\newtheorem{claim}{Claim}
\newtheorem*{claim*}{Claim}
\begin{document}
\title{
Existence of compatible systems of lisse sheaves on arithmetic schemes
}
\date{}
\author{Koji Shimizu}
\address{Harvard University, Department of Mathematics, Cambridge, MA 02138}
\email{shimizu@math.harvard.edu}

\begin{abstract}
Deligne conjectured that a single $\ell$-adic lisse sheaf on a normal variety over a finite field can be embedded into a compatible system of $\ell'$-adic lisse sheaves with various $\ell'$. 
Drinfeld used Lafforgue's result as an input and proved this conjecture when the variety is smooth.
We consider an analogous existence problem for a regular flat scheme over $\Z$
and prove some cases using Lafforgue's result and the work of Barnet-Lamb, Gee, Geraghty, and Taylor.
\end{abstract}

\maketitle

\section{Introduction}

In \cite{MR601520}, Deligne 
conjectured that all the $\overline{\Q}_\ell$-sheaves on a variety over a finite field are mixed. A standard argument reduces this conjecture to the following one.

\begin{conj}[Deligne]\label{conj:Deligne}
Let $p$ and $\ell$ be distinct primes.
 Let $X$ be a connected normal scheme of finite type over $\F_p$ and $\calE$ an irreducible lisse $\overline{\Q}_\ell$-sheaf whose determinant has finite order.
Then the following properties hold:
\begin{enumerate}
 \item $\calE$ is pure of weight 0.
 \item There exists a number field $E\subset \overline{\Q}_\ell$ such that the polynomial $\det(1-\Frob_xt,\calE_{\bar{x}})$ has coefficients in $E$ for every $x\in \lvert X\rvert$.
 \item For every non-archimedean place $\lambda$ of $E$ prime to $p$, the 
roots of $\det(1-\Frob_xt,\calE_{\bar{x}})$ are $\lambda$-adic units.
 \item For a sufficiently large $E$ and for every non-archimedean place $\lambda$ of $E$ prime to $p$, there exists an $E_\lambda$-sheaf $\calE_\lambda$ compatible with $\calE$, that is, 
$\det (1-\Frob_xt,\calE_{\bar{x}})=\det (1-\Frob_xt,\calE_{\lambda,\bar{x}})$
for every $x\in \lvert X\rvert$.
\end{enumerate}
Here $\lvert X\rvert$ denotes the set of closed points of $X$ and
$\bar{x}$ is a geometric point above $x$.
\end{conj}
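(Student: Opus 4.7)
The plan is to follow Drinfeld's strategy, which succeeds for smooth $X$: reduce pointwise assertions to the case of smooth curves, invoke Lafforgue's theorem on the Langlands correspondence over function fields, and then assemble the curve-level data into a global compatible system.

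For parts (i)-(iii), which concern the Frobenius polynomial at a single closed point $x \in \lvert X\rvert$, the first step is to produce, for each such $x$, a smooth curve $C$ over $\F_p$ and a map $C \to X$ whose image passes through $x$ such that the pullback $\calE|_C$ retains enough information about $\Frob_x$ (for instance, contains an irreducible constituent with the same local Frobenius eigenvalues). Such curves exist by Bertini-type arguments applied to the smooth locus of $X$; normality ensures the singular locus has codimension at least two in each component, so a generic curve avoids it. On $C$, Lafforgue's theorem attaches to each irreducible lisse $\overline{\Q}_\ell$-sheaf of finite-order determinant a cuspidal automorphic representation of $\GL_n$ over the function field of $C$. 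From this one reads off purity of weight $0$, algebraicity of Frobenius eigenvalues in a number field $E$, and $\lambda$-adic integrality for $\lambda \nmid p$ via Ramanujan-Petersson and the local Langlands correspondence.

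For part (iv) one must promote the resulting collection of $E_\lambda$-sheaves on curves into a single $E_\lambda$-lisse sheaf $\calE_\lambda$ on $X$ itself. Since all the curve-level sheaves come from the same global $\calE$ through Lafforgue's correspondence, the Frobenius traces at closed points will be automatically compatible, so the missing input is a glueing theorem: a lisse sheaf on a smooth variety is determined by, and can be reconstructed from, a coherent family of lisse sheaves on sufficiently many smooth curves mapping to it, together with trace data at closed points. The main obstacle is precisely this glueing, whose proof uses Wiesend-type higher-dimensional class field theory and delicate control of the tame fundamental group; for merely normal $X$ one would reduce to the smooth case by passing to a dense smooth open and then extending across the singular locus using purity of the branch locus. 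A secondary obstacle is ensuring that the number field $E$ can be chosen uniformly in $x$, which should follow from bounded rank and bounded ramification once one has uniform control of the Satake parameters produced by Lafforgue across all curves through $X$.
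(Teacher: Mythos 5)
This statement is a conjecture, not a theorem of the paper, and the paper does not claim to prove it. As the paper itself notes immediately after Conjecture~\ref{conj:Deligne}, parts (i) and (iii) are due to Lafforgue (via reduction to curves), part (ii) is due to Deligne, and part (iv) is proved by Drinfeld only for \emph{smooth} varieties; for merely normal $X$ part (iv) remains open. So there is no ``paper's proof'' to compare your attempt against, and any attempt must fail somewhere.

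The concrete failure in your sketch is your treatment of part (iv) for normal non-smooth $X$. You write that ``for merely normal $X$ one would reduce to the smooth case by passing to a dense smooth open and then extending across the singular locus using purity of the branch locus.'' This does not work: Zariski--Nagata purity requires the ambient scheme to be regular, and the extension step across the closed locus where $X$ is singular is precisely the point where all known arguments break down. Drinfeld's gluing mechanism is doubly constrained to the regular/smooth setting --- the purity argument needs regularity of $X$, and the Hilbert-irreducibility/approximation step that finds curves through prescribed points in prescribed tangent directions uses smoothness. There is no known substitute in the normal case, which is why part (iv) is stated as a conjecture for normal $X$. A second, less central gap is your handling of part (ii): the uniform number field $E$ containing all Frobenius coefficients is genuinely subtle and cannot be obtained merely from ``bounded rank and bounded ramification''; Deligne's argument in \cite{MR3024820} is considerably more involved than your sketch suggests.
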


The conjecture for curves is proved by L. Lafforgue in \cite{MR1875184}.
He also deals with parts (i) and (iii) in general by reducing them to the case of curves (see also \cite{MR3024820}).
Deligne proves part (ii) in \cite{MR3024820}, and Drinfeld proves part (iv) for smooth varieties in \cite{MR3024821}. They both use Lafforgue's results.

We can consider similar questions for arbitrary schemes of finite type over $\Z[\ell^{-1}]$.
This paper focuses on part (iv), namely the problem of embedding a single lisse $\overline{\Q}_\ell$-sheaf into a compatible system of lisse sheaves. We have the following folklore conjecture in this direction.

\begin{conj}\label{conj:compatibility w/o finiteness}
Let $\ell$ be a rational prime.
Let $X$ be an irreducible regular scheme that is flat and of finite type over
$\Z[\ell^{-1}]$.
Let $E$ be a finite extension of $\Q$ and $\lambda$ a prime of $E$ above $\ell$.
Let $\calE$ be an irreducible lisse $E_\lambda$-sheaf on $X$ and $\rho$ the corresponding representation of $\pi_1(X)$.
Assume the following conditions:
\begin{enumerate}
 \item The polynomial $\det (1-\Frob_xt,\calE_{\bar{x}})$ has coefficients in $E$ for every $x\in\lvert X\rvert$.
 \item $\calE$ is de Rham at $\ell$ (see below for the definition).
\end{enumerate}
Then for each rational prime $\ell'$ and each prime $\lambda'$ of $E$ above $\ell'$
 there exists a lisse $\overline{E}_{\lambda'}$-sheaf on $X[\ell'^{-1}]$ which is compatible with $\calE|_{X[\ell'^{-1}]}$.
\end{conj}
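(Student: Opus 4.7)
The plan is to combine Drinfeld's theorem on the special fibers with the potential automorphy theorems of Barnet-Lamb--Gee--Geraghty--Taylor (BLGGT) applied to specializations of the generic fiber, and then to glue via Chebotarev density into a single lisse sheaf on $X[\ell'^{-1}]$.

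For each prime $p \neq \ell$, the special fiber $X_p := X \otimes_\Z \F_p$ is of finite type over $\F_p$, and its smooth locus is open and dense because $X$ is regular. The restriction of $\calE$ is lisse on this locus, so Drinfeld's theorem produces, for every $\ell' \neq p$, a compatible $\overline{E}_{\lambda'}$-sheaf on each smooth irreducible component; a purity argument then extends this to all of $X_p$. For the characteristic-zero side, a closed point $y$ of the generic fiber $X_\Q$ has residue field a number field $K_y$, and $\calE$ specializes to a $\lambda$-adic representation of $\Gal(\overline{K_y}/K_y)$ that is de Rham at $\ell$ by assumption (ii). Under the hypotheses of BLGGT---polarizability, regular Hodge--Tate weights, and a residual bigness condition---one obtains a compatible system of $\lambda'$-adic representations whose Frobenius traces match those dictated by $\calE$ via assumption (i).

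The gluing step is Tannakian in nature. Fix $\ell'$ and a geometric base point of $X[\ell'^{-1}]$. The characteristic polynomials of Frobenius at every closed point of $X[\ell'^{-1}]$ are pinned down by assumption (i), and Chebotarev density applied to the closed points of each $X_p$ (which exhaust $\lvert X\rvert$) determines the desired semisimple representation of $\pi_1(X[\ell'^{-1}])$ uniquely. The challenge is to realize this representation as an actual lisse sheaf, rather than a mere compatible collection of trace data; one assembles the Drinfeld sheaves across different primes $p$ using that their Frobenius eigenvalues agree with BLGGT-specializations at closed points of $X_\Q$, and applies a spreading-out argument on a fibered family of curves through a dense set of closed points.

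The main obstacle is the restrictive input required by BLGGT: polarizability, Hodge--Tate regularity, and residual bigness are not automatic, so the conjecture is only accessible when $\calE$ is known a priori to satisfy such hypotheses, for instance when it arises from a geometric family of suitably polarized abelian varieties. A secondary obstacle is the spreading-out step itself: passing from pointwise compatible systems at a dense set of specializations to a genuine lisse sheaf on $X_\Q[\ell'^{-1}]$ requires controlling ramification globally and likely demands a fibered-curve argument in the spirit of Drinfeld's geometric proof, adapted to the mixed-characteristic setting.
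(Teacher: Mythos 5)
The statement you are proving is labeled a \emph{conjecture} in the paper, not a theorem; the paper explicitly does not prove it. The theorems it does prove (Theorem~\ref{thm:main thm-special}, Theorem~\ref{thm:main thm}, Theorem~\ref{thm:main thm for CM}) build in exactly the extra hypotheses --- a (conjugate-)symplectic or orthogonal structure with odd/even multiplier, distinct Hodge--Tate weights, absolute irreducibility of the residual representation on $\pi_1(X[\zeta_\ell])$, and a lower bound on $\ell$ --- that the potential-automorphy machinery of \cite{MR3152941} requires. You observe this yourself in your final paragraph, which already signals that what you wrote cannot be a proof of the conjecture as stated but at best a strategy for the conditional versions.

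Beyond that concession, there is a genuine gap in the gluing step. Having a compatible $\overline{E}_{\lambda'}$-sheaf on each special fiber $X_p$ (via Lafforgue/Drinfeld) together with compatible $\lambda'$-adic representations at closed points of the generic fiber (via BLGGT) does not, by itself, produce a lisse sheaf on $X[\ell'^{-1}]$. The fibers $X_p$ for various $p$ are pairwise disjoint, and a family of lisse sheaves on them carries no information about ramification in horizontal directions: Chebotarev pins down the would-be semisimple representation uniquely \emph{if} it exists, but the existence is precisely the problem. What the paper actually does is prove a Wiesend--Drinfeld style reconstruction criterion (Theorem~\ref{thm:Dr-thm2.5} and its variants Theorem~\ref{thm:var of Dr-thm2.5}, Theorem~\ref{thm:Dr-thm2.5 for CM}): a polynomial-valued map of degree $r$ on $\lvert X\rvert$ arises from a lisse sheaf on $X$ if and only if its restrictions to totally real curves \emph{and} to separated smooth curves over finite fields do. This criterion is itself nontrivial; its proof depends on an approximation theorem (Theorem~\ref{thm:Dr-thm2.15} and Corollary~\ref{cor:Dr-prop2.17}) that uses Moret-Bailly's theorem to produce totally real (or CM) curves mapping to $X$ that pass through prescribed closed points with prescribed tangent directions and are inert in a prescribed covering, and then runs Drinfeld's chain-of-elementary-fibrations induction. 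Your ``spreading-out argument on a fibered family of curves through a dense set of closed points'' gestures in this direction but identifies neither the precise reconstruction criterion nor the Moret-Bailly mechanism that makes the totally real curves available; without these, the step from pointwise trace data to an actual lisse sheaf on $X$ is not justified.
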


The conjecture when $\dim X=1$ is usually rephrased in terms of Galois representations of a number field (see Conjecture~1.3 of \cite{MR1989198} for example).

When $\dim X>1$,  a lisse sheaf $\calE$ is called \textit{de Rham at $\ell$}
if for every closed point $y\in X\otimes \Q_\ell$, the representation $i_y^\ast\rho$
of $\Gal\bigl(\overline{k(y)}/k(y)\bigr)$ is de Rham, where $i_y$ is the morphism $\Spec k(y)\ra X$. 
Ruochuan Liu and Xinwen Zhu have shown
that this is equivalent to the condition that
the lisse $E_\lambda$-sheaf $\calE|_{X\otimes \Q_\ell}$
on $X\otimes \Q_\ell$ is a de Rham sheaf
in the sense of relative $p$-adic Hodge theory 
(see \cite{LZ} for details).

Now we discuss our main results.
They concern Conjecture~\ref{conj:compatibility w/o finiteness}
for schemes over the ring of integers of a totally real or CM field.

\begin{thm}\label{thm:main thm-special}
Let $\ell$ be a rational prime and $K$ a totally real field which is unramified at $\ell$.
Let $X$ be an irreducible smooth $\calO_K[\ell^{-1}]$-scheme such that 
\begin{itemize}
 \item the generic fiber is geometrically irreducible,
 \item $X_K(\R)\neq \emptyset$ for every real place $K\hra \R$, and
 \item $X$ extends to an irreducible smooth $\calO_K$-scheme with nonempty fiber over each place of $K$ above $\ell$.
\end{itemize}
Let $E$ be a finite extension of $\Q$ and $\lambda$ a prime of $E$ above $\ell$.
Let $\calE$ be a lisse $E_\lambda$-sheaf on $X$ and $\rho$ the corresponding representation of $\pi_1(X)$.
Suppose that $\calE$ satisfies the following assumptions:
\begin{enumerate}
 \item The polynomial $\det (1-\Frob_xt,\calE_{\bar{x}})$ has coefficients in $E$ for every $x\in \lvert X\rvert$.
 \item For every totally real field $L$ which is unramified at $\ell$ and every morphism $\alpha\colon \Spec L\ra X$, 
the $E_\lambda$-representation $\alpha^\ast\rho$ of $\Gal(\overline{L}/L)$ is crystalline at each prime $v$ of $L$ above $\ell$, and for each
$\tau\colon L\hra \overline{E}_\lambda$ it has 
distinct $\tau$-Hodge-Tate numbers in the range $[0,\ell-2]$.
 \item $\rho$ can be equipped with symplectic (resp.~ orthogonal) structure with multiplier $\mu\colon \pi_1(X)\ra E_\lambda^\times$ 
such that $\mu|_{\pi_1(X_K)}$ admits a factorization
\[
\mu|_{\pi_1(X_K)}\colon \pi_1(X_K)\lra\Gal(\overline{K}/K)\stackrel{\mu_K}{\lra} E_\lambda^\times
\]
with a totally odd (resp.~ totally even) character $\mu_K$
(see below for the definitions).
 \item The residual representation $\overline{\rho}|_{\pi_1(X[\zeta_\ell])}$ is absolutely irreducible.
Here $\zeta_\ell$ is a primitive $\ell$-th root of unity and
$X[\zeta_\ell]=X\otimes_{\calO_K}\calO_K[\zeta_\ell]$.
 \item $\ell\geq 2(\rank \calE+1)$.
\end{enumerate}
Then for each rational prime $\ell'$ and each prime $\lambda'$ of $E$ above $\ell'$ there exists a lisse $\overline{E}_{\lambda'}$-sheaf on $X[\ell'^{-1}]$ which is compatible with $\calE|_{X[\ell'^{-1}]}$.
\end{thm}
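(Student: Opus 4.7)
Fix a prime $\ell'$ and a place $\lambda'\mid \ell'$ of $E$; the case $\ell'=\ell$ is immediate, so assume $\ell'\ne \ell$. The plan is two-pronged. On each smooth special fiber $X_{\fkp}$ of $X$ over the residue field of a prime $\fkp$ of $\calO_K[\ell^{-1}]$ with $\fkp\nmid \ell'$, Drinfeld's theorem (via Lafforgue) produces a compatible $\overline E_{\lambda'}$-sheaf $\calE_{\fkp,\lambda'}$ on $X_{\fkp}$ whose Frobenius traces agree with those of $\calE|_{X_{\fkp}}$. In parallel, for totally real extensions $L/K$ unramified at $\ell$ and morphisms $\alpha\colon \Spec L\to X$, the potential automorphy theorems of Barnet-Lamb, Gee, Geraghty, and Taylor apply to $\alpha^\ast\rho$ and produce compatible systems of polarized $\lambda'$-adic Galois representations extending $\alpha^\ast\rho$. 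These two inputs will be combined to construct $\calE_{\lambda'}$ on $X[\ell'^{-1}]$.

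\textbf{Verifying the BLGGT hypotheses at test points.} Assumptions (ii)--(v) are tailored to the standing hypotheses of BLGGT. For $\alpha$ as above, the representation $\alpha^\ast\rho$ is crystalline at primes of $L$ above $\ell$ with distinct Hodge-Tate weights in $[0,\ell-2]$ (by (ii)), is polarized with the correct totally-odd or totally-even multiplier character inherited from (iii), and satisfies the numerical condition $\ell\ge 2(\rank\calE+1)$ (by (v)). The big-image condition on $\overline{\alpha^\ast\rho}|_{\Gal(\overline L/L(\zeta_\ell))}$ follows from (iv) at sufficiently general $\alpha$, using a Hilbert-irreducibility-type argument to ensure that the image of $\Gal(\overline L/L(\zeta_\ell))$ is Zariski-dense in $\pi_1(X[\zeta_\ell])$. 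Moreover, (iv) forces $\rho$ itself (and hence each restriction $\calE|_{X_\fkp}$) to be absolutely irreducible, which is what Drinfeld's theorem requires. The existence of enough totally real test morphisms $\alpha$ is guaranteed by $X_K(\R)\ne \emptyset$ at every real place together with the geometric irreducibility of $X_K$.

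\textbf{Assembly and main obstacle.} To assemble the $\calE_{\fkp,\lambda'}$ and the pointwise compatible systems into a single lisse sheaf on $X[\ell'^{-1}]$, one appeals to the Chebotarev-style rigidity principle that a semisimple lisse sheaf on a connected normal scheme is determined up to isomorphism by its Frobenius traces at closed points. Since every closed point of $X[\ell'^{-1}]$ lies in some $X_\fkp$, the candidate trace function is specified by the $\calE_{\fkp,\lambda'}$, and by construction it agrees with that of $\calE$. The content of the theorem --- and the main technical obstacle --- is to show that this trace function actually comes from a genuine lisse sheaf on all of $X[\ell'^{-1}]$, rather than from a disconnected family on the special fibers. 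This is precisely where BLGGT is used to bridge the different special fibers: the compatible systems attached to arithmetic points of $X_K$ provide characteristic-zero Galois representations that constrain how the $\calE_{\fkp,\lambda'}$ must fit together across $\fkp$, allowing one to deduce the existence of $\calE_{\lambda'}$ on $X[\ell'^{-1}]$ from regularity, flatness, and the matching of traces.
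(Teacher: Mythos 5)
Your proposal correctly identifies the two key inputs --- Lafforgue's theorem supplying compatible sheaves on curves over finite fields, and BLGGT supplying compatible systems of Galois representations at totally real points --- and correctly observes that hypotheses (ii)--(v) are tailored to the standing assumptions of BLGGT's Theorem C. But the assembly step, which you rightly flag as "the main technical obstacle," is left as a gesture rather than an argument, and that gesture points in a direction that does not work.

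The Chebotarev-style rigidity you invoke gives only \emph{uniqueness} of a lisse sheaf with prescribed Frobenius traces; it gives no existence criterion. Saying that BLGGT "provides characteristic-zero Galois representations that constrain how the $\calE_{\fkp,\lambda'}$ must fit together across $\fkp$" names the desired conclusion without supplying a mechanism. What the paper actually does --- and what your proposal is missing --- is to develop a Drinfeld--Wiesend-type \emph{reconstruction} theorem (Theorem~\ref{thm:Dr-thm2.5} and its variants, especially Remark~\ref{rem:var of Dr-thm2.5 for unramified case}): a polynomial-valued map $f$ on closed points of $X$ comes from a lisse $E_\lambda$-sheaf if and only if its restriction to every totally real curve (of a restricted type, unramified at $\ell$, inert in a fixed covering) and every smooth curve over a finite field does. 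Proving this reconstruction theorem is the content of Sections~2 and~3; it requires the notion of a map ``having a kernel,'' an elementary-fibration induction, the Zariski--Nagata purity theorem, and crucially an approximation theorem (Theorem~\ref{thm:Dr-thm2.15} via Moret-Bailly) producing totally real curves through prescribed points with prescribed tangent directions and inertness. None of this appears in your proposal, and without it there is no way to pass from local data on curves to a global lisse sheaf.

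Two smaller inaccuracies: Drinfeld's Theorem~2.5 does not require irreducibility of the sheaf (it works with arbitrary polynomial-valued maps and equivalence classes up to semisimplification), so the claim that (iv) is needed because ``Drinfeld's theorem requires'' absolute irreducibility is wrong --- (iv) is needed solely to verify the residual big-image hypothesis of BLGGT at test curves. Also, the relevant property of test curves is not Zariski density of the Galois image but surjectivity of $\pi_1(C)\to\pi_1(X)/H$ for an appropriate closed normal subgroup $H$, arranged via Moret-Bailly and the inertness criterion (Lemma~\ref{lem:inert criterion for a cover}); a generic ``Hilbert-irreducibility-type argument'' does not by itself give this in the arithmetic (non-function-field) setting with the totally real constraint.
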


For an $E_\lambda$-representation $\rho\colon\pi_1(X)\ra \GL(V_\rho)$,
a symplectic (resp.~ orthogonal) structure with multiplier 
is a pair $(\langle\,,\rangle,\mu)$ consisting of a symplectic (resp.~ orthogonal) pairing $\langle\,,\rangle\colon V_\rho\times V_\rho\ra E_\lambda$
and a continuous homomorphism $\mu\colon \pi_1(X)\ra E_\lambda^\times$
satisfying 
$\langle \rho(g)v,\rho(g)v'\rangle=\mu(g)\langle v,v'\rangle$
for any $g\in \pi_1(X)$ and $v,v'\in V_\rho$.

We show a similar theorem without assuming that $K$ is unramified at $\ell$ using the potential diagonalizability assumption. 
See Theorem~\ref{thm:main thm} for this statement and 
Theorem~\ref{thm:main thm for CM} for the corresponding statement when $K$ is CM.

The proof of Theorem~\ref{thm:main thm-special} uses Lafforgue's work and the work of
Barnet-Lamb, Gee, Geraghty, and Taylor (Theorem C of \cite{MR3152941}).
The latter work concerns
Galois representations of  a totally real field, and
it can be regarded as a special case of Conjecture~\ref{conj:compatibility w/o finiteness} when $\dim X=1$.
We remark that their theorem has several assumptions on Galois representations since they use potential automorphy.
Hence Theorem~\ref{thm:main thm-special} needs assumptions (ii)-(v) on lisse sheaves.

The main part of this paper is devoted to constructing a compatible system of 
lisse sheaves on a scheme from those on curves.
Our method is modeled after Drinfeld's result in \cite{MR3024821}, 
which we explain now.

For a given lisse sheaf on a scheme, one can obtain a lisse sheaf on 
each curve on the scheme by restriction. 
Conversely, Drinfeld proves that a collection of lisse sheaves on curves 
on a regular scheme defines a lisse sheaf on the scheme
if it satisfies some compatibility and tameness conditions
(Theorem 2.5 of \cite{MR3024821}). See also a remark after Theorem~\ref{thm:Dr-thm2.5}. 
This method originates from the work of Wiesend on 
higher dimensional class field theory (\cite{MR2268759}, \cite{MR2541032}). 

Drinfeld uses this method to
 reduce part (iv) of Conjecture~\ref{conj:Deligne} for smooth varieties to the case when $\dim X=1$, where he can use Lafforgue's result.
Similarly, one can use his result to reduce Conjecture~\ref{conj:compatibility w/o finiteness} to the case when $\dim X=1$.

However, Drinfeld's result cannot be used to reduce 
Theorem~\ref{thm:main thm-special} to the results of Lafforgue and 
Barnet-Lamb, Gee, Geraghty, and Taylor since
his theorem needs
a lisse sheaf on every curve on the scheme as an input.
On the other hand, 
the results of \cite{MR1875184} and \cite{MR3152941} only provide
compatible systems of lisse sheaves 
on special types of curves on the scheme: 
curves over finite fields and 
\emph{totally real curves}, that is, 
 open subschemes of the spectrum of the ring of integers of a totally real field.
Thus the goal of this paper 
is to deduce  Theorem~\ref{thm:main thm-special}
using the existence of compatible systems of lisse sheaves
on these types of curves.

We now explain our  method.
Fix a prime $\ell$ and a finite extension $E_\lambda$ of $\Q_\ell$.
Fix a positive integer $r$.
On a normal scheme $X$ of finite type over $\Spec \Z[\ell^{-1}]$, each lisse $E_\lambda$-sheaf $\calE$ of rank $r$ defines
a polynomial-valued map $f_{\calE}\colon \lvert X\rvert \ra E_\lambda[t]$
of degree $r$
given by $f_{\calE, x}(t)=\det (1-\Frob_xt,\calE_{\bar{x}})$.
Here we say that a polynomial-valued map is of degree $r$ if
its values are polynomials of degree $r$.
Moreover, $f_\calE$ determines $\calE$ up to semisimplifications by the Chebotarev density theorem.
Conversely, we can ask whether a polynomial-valued map $f\colon\lvert X\rvert\ra E_\lambda[t]$ of degree $r$ arises from a lisse sheaf of rank $r$ on $X$ in this way.

Let $K$ be a totally real field. 
Let $X$ be an irreducible smooth $\calO_K$-scheme which has geometrically irreducible generic fiber and satisfies $X_K(\R)\neq \emptyset$ for every real place $K\hra \R$.
In this situation, we show the following theorem.

\begin{thm}\label{thm:Dr-thm2.5}
A polynomial-valued map $f$ of degree $r$ on $\lvert X\rvert$ arises from a lisse sheaf on $X$
if and only if it satisfies the following conditions:
 \begin{enumerate}
  \item The restriction of $f$ to each totally real curve arises from a lisse sheaf.
  \item The restriction of $f$ to each separated smooth curve over a finite field arises from a lisse sheaf.
 \end{enumerate}
\end{thm}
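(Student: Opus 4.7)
The ``only if'' direction is immediate, since the restriction of a lisse sheaf on $X$ to any subcurve is a lisse sheaf. For the converse, the plan is to reduce to Drinfeld's Theorem~2.5 from \cite{MR3024821}, which constructs a lisse sheaf on $X$ from any polynomial-valued map $f$ that restricts to a lisse sheaf on \emph{every} curve $C\subset X$ (subject to a tameness condition along the boundary). The task is therefore to upgrade hypotheses (i) and (ii) from the two special classes of curves to arbitrary curves in $X$.

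If $C\subset X$ maps into a single fiber of $X\to\Spec\calO_K[\ell^{-1}]$, then after normalization $C$ is a separated smooth curve over a finite field and (ii) applies. The main case is an arithmetic curve $C$ that dominates $\Spec\calO_K[\ell^{-1}]$. For such $C$, the plan is to produce a lisse sheaf on $C$ indirectly by exploiting the abundance of totally real subcurves in $X$. The hypothesis $X_K(\R)\neq\emptyset$ at every real place, combined with smoothness of $X$, should allow a Moret-Bailly--type existence theorem to produce, for a Zariski-dense set of closed points $x\in C$, a totally real curve $D_x\subset X$ passing through $x$ (after possibly replacing $x$ by a closed point of $X$ with the same residue characteristic lying on $D_x$). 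Hypothesis (i) then furnishes lisse sheaves on the $D_x$ whose Frobenius characteristic polynomials at $x$ are the prescribed $f_x(t)$, and (ii) handles the remaining closed points via finite-field subcurves through the special fibers.

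With this input one then assembles a representation of $\pi_1(C^\circ)$ on a dense open $C^\circ\subset C$ whose Frobenius traces agree with $f$: Frobenius classes at a dense set of closed points are pinned down by the compatible lisse sheaves on the various $D_x$, Chebotarev density controls the semisimplification, and a patching and continuity argument along the totally real curves upgrades the bare polynomial data into an honest lisse sheaf. The tameness condition of Drinfeld's theorem is comparatively mild in this smooth setting and should reduce to the tameness already encoded in the lisse sheaves provided by (i) and (ii).

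The main obstacle splits into two parts: a geometric one, namely producing enough totally real curves $D_x$ through prescribed points of $X$, which rests crucially on the real-point hypothesis and Moret-Bailly--type existence results; and an algebraic one, namely patching the compatible polynomial data on the $D_x$ into a single lisse sheaf on $C$ rather than a mere family of characteristic polynomials indexed by $|C|$. Both steps depend on the totally real curves forming a class rich enough to detect representations of $\pi_1(C)$, so that (i) and (ii) together yield the full curve-wise input that Drinfeld's theorem demands.
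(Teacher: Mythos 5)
Your plan to reduce to Drinfeld's Theorem~2.5 by upgrading hypotheses (i) and (ii) to \emph{all} curves runs into exactly the obstacle the paper is designed to overcome, and the proposal does not resolve it. Drinfeld's theorem genuinely needs a lisse sheaf on \emph{every} regular curve $C$ as input. For an arbitrary arithmetic curve $C\subset X$ (say one whose fraction field is a non-totally-real number field), you propose to manufacture a lisse sheaf by ``patching'' the data coming from totally real curves $D_x$ through various closed points of $C$. But knowing the values $f_x(t)$ for $x\in\lvert C\rvert$ is not the issue --- $f$ is already defined on all of $\lvert X\rvert$. The issue is producing a continuous representation of $\pi_1(C)$ realizing those values, and your ``patching and continuity argument'' gives no mechanism for this. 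Pinning down semisimple representations from Frobenius data requires already having a representation of some profinite group to work with, and the auxiliary totally real curves $D_x$ have fundamental groups that do not obviously surject onto $\pi_1(C)$ or onto anything controlling it. This is precisely as hard as the original problem, so the ``reduction'' to Drinfeld's theorem is circular in the crucial case.

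The paper instead bypasses arbitrary curves entirely. It introduces the notion of $f$ ``having a kernel,'' i.e.\ a closed normal $H\subset\pi_1(X)$ such that $\pi_1(X)/H$ has an open pro-$\ell$ subgroup and $f$ becomes $\fkm^n$-trivial on a cofinal family of finite covers. The three propositions then run as follows: (a) using an induction along elementary fibrations and the Moret-Bailly machinery, $f$ has a kernel on some dense open $U\subset X$; (b) given the kernel, one picks a \emph{single} totally real curve $C$ with $\pi_1(C)\twoheadrightarrow\pi_1(X)/H$, and the representation on $\pi_1(C)$ attached to $f|_C$ is shown to factor through $\pi_1(X)/H$, hence descends to $\pi_1(X)$; agreement with $f$ at all of $\lvert X\rvert$ is then checked by Faltings' finiteness lemma plus one more well-chosen totally real curve; (c) a Zariski--Nagata purity argument extends from $U$ to $X$. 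No lisse sheaf on an arbitrary arithmetic curve is ever constructed. Your proposal is missing the kernel concept, the $\pi_1$-surjectivity engineered by Moret-Bailly, and this descent step --- which together constitute the entire content of the converse direction.

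A smaller but real gap: you wave off Drinfeld's tameness hypothesis for the finite-characteristic curves as ``comparatively mild.'' It is not automatic from (ii) as stated. The paper handles it by replacing $X$ with $X\otimes_\Z\Z[N^{-1}]$ for $N=\lvert\GL_r(k_\lambda)\rvert$; after this base change, wild inertia (pro-$p$ with $p\nmid N$) must act trivially, so all lisse sheaves on finite-field curves are forced to be tame. Without this device you would have to justify tameness separately.
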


We prove a similar theorem when $K$ is CM (Theorem~\ref{thm:Dr-thm2.5 for CM}).

Drinfeld's theorem involves a similar equivalence, which holds for arbitrary regular schemes of finite type, although his condition (i) is required to hold for arbitrary regular curves and there is an additional tameness assumption in his condition (ii).\footnote{We do not need tameness assumption in condition (ii) in Theorem~\ref{thm:Dr-thm2.5}. This was pointed out by Drinfeld.}

If $K$ and $X$ satisfy the assumptions in Theorem~\ref{thm:main thm-special}, 
then we prove a variant of Theorem~\ref{thm:Dr-thm2.5}, 
where we require condition (i) to hold only for totally real curves which are unramified over $\ell$ (Remark~\ref{rem:var of Dr-thm2.5 for unramified case}).
This variant, combined with the results by Lafforgue and Barnet-Lamb, Gee, Geraghty, and Taylor, implies Theorem~\ref{thm:main thm-special}.

One of the main ingredients for the proof of these types of theorems is an approximation theorem: One needs to find a curve passing through given points in given tangent directions and satisfying a technical condition
coming from a given \'etale covering.
To prove this Drinfeld uses the Hilbert irreducibility theorem. 
In our case, we need to further require that such a curve  be totally real or CM. For this we use a theorem of Moret-Bailly.

We briefly mention a topic related to Conjecture~\ref{conj:compatibility w/o finiteness}.
As conjectures on Galois representations suggest, the following stronger statement should hold.

\begin{conj}\label{conj:compatibility}
With the notation as in Conjecture~\ref{conj:compatibility w/o finiteness},
condition \emph{(ii)} implies condition \emph{(i)}
after replacing $E$ by a bigger number field inside $E_\lambda$.
\end{conj}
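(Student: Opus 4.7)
The plan is to mimic the approach developed in this paper: reduce the algebraicity claim on $X$ to the analogous claim on curves of the two types considered here, namely separated smooth curves over finite fields and totally real (or CM) curves. Since the characteristic polynomials in question are attached to closed points of $X$, it suffices, for each $x\in|X|$, to find an embedding $C\hra X$ passing through $x$ with $C$ of one of those two types, and verify condition~(i) for $\calE|_C$. The Moret--Bailly approximation machinery used in the proof of Theorem~\ref{thm:Dr-thm2.5} should produce such curves passing through prescribed points with prescribed tangent directions.

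On each curve one invokes the appropriate input. When $C$ is smooth over a finite field of characteristic $\neq\ell$, Lafforgue's theorem guarantees that the Frobenius characteristic polynomials of $\calE|_C$ have coefficients in a single number field depending only on $\calE|_C$. When $C$ is totally real or CM, the lisse sheaf $\calE|_C$ corresponds to an $\ell$-adic Galois representation of the corresponding number field which is de~Rham at every prime above $\ell$, thanks to hypothesis~(ii); one then invokes a Fontaine--Mazur-type statement --- supplied under suitable hypotheses by potential automorphy such as Theorem~C of \cite{MR3152941} --- to conclude algebraicity of its Frobenius characteristic polynomials, together with the existence of a compatible system with a common coefficient field.

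To patch the algebraic traces obtained on the various curves into a common finite extension $E'\subset E_\lambda$ of $E$, one uses Chebotarev density together with the fact that the image of $\rho$ is compact and hence, after conjugation, contained in $\GL(r,\calO_{E_\lambda'})$ for a finite extension $E_\lambda'/E_\lambda$. The central obstacle is the totally real (or CM) curve case: proving that a de~Rham Galois representation of a number field has algebraic Frobenius traces is essentially the Fontaine--Mazur conjecture, and the currently available potential automorphy theorems (BLGGT and their descendants) require substantial additional hypotheses, precisely the sort of conditions already imposed in Theorem~\ref{thm:main thm-special}. For a number field $K$ that is neither totally real nor CM there is at present no adequate automorphy input, so even the one-dimensional case of Conjecture~\ref{conj:compatibility} lies beyond this strategy without major advances in the Langlands program.
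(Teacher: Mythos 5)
This is Conjecture~\ref{conj:compatibility}, which the paper explicitly does \emph{not} prove --- immediately after stating it, the author writes that ``even if we assume the conjectures for curves, that is, Galois representations of a number field, no method is known to prove Conjecture~\ref{conj:compatibility} in full generality,'' and then points to a separate paper \cite{Shi} where a version is established for smooth schemes under the assumption of conjectures on Galois representations \emph{and} the Generalized Riemann Hypothesis. So there is no ``paper's own proof'' against which to compare your attempt, and any purported unconditional proof would contradict the paper's own assessment.

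To your strategy itself: you correctly identify one of the main obstacles --- that algebraicity of Frobenius traces for a de~Rham representation of a number field is essentially Fontaine--Mazur, and available potential automorphy theorems require exactly the extra hypotheses (self-duality, residual irreducibility, bounds on Hodge--Tate weights, totally real/CM base) that Theorem~\ref{thm:main thm-special} imposes. But there is a second, independent gap that your proposal glosses over. Even granting the curve case with no caveats, you still have to produce a \emph{single} number field $E'\subset E_\lambda$ containing the traces at \emph{all} closed points of $X$. Your appeal to Chebotarev plus compactness of the image only shows that traces lie in $\calO_{E_\lambda'}$ for some fixed $\ell$-adic field; it says nothing about a uniform finite extension of $\Q$. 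Each curve $C\hra X$ produces its own trace field $E_C$, and without a bound on $[E_C:\Q]$ that is uniform over the (infinite) family of curves needed to cover $|X|$, the union of these fields need not be a number field. This uniformity is precisely what the author needs GRH for in \cite{Shi}; no argument along the lines you sketch handles it. This is why the paper leaves the statement as a conjecture even conditionally on the curve case.

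In short: your plan is a reasonable reading of the paper's general philosophy, and you are right that the curve-level input is presently out of reach for general number fields, but the patching step is also nontrivial and your proposal treats it as routine when it is not.
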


This is an analogue of Conjecture~\ref{conj:Deligne} (ii) and (iii).
Even if we assume the conjectures for curves, that is, Galois representations of a number field, no method is known to prove Conjecture~\ref{conj:compatibility} in full generality. 
However in \cite{Shi} 
we show the conjecture for smooth schemes assuming conjectures on Galois representations of a number field and the Generalized Riemann Hypothesis. 
Note that  Deligne's proof of Conjecture~\ref{conj:Deligne} (iii) (\cite{MR3024820}) uses the Riemann Hypothesis for varieties over finite fields, or more precisely, the purity theorem of \cite{MR601520}.

We now explain the organization of this paper.
In Section~\ref{section:tatally real curves}, we review the theorem of Moret-Bailly and prove an approximation theorem for ``schemes with enough totally real curves.''  We show a similar theorem in  the CM case.
In Section~\ref{section:prf of Dr-thm2.5}, we prove Theorem~\ref{thm:Dr-thm2.5} and its variants using the approximation theorems. Most arguments in Section~\ref{section:prf of Dr-thm2.5} originate from Drinfeld's paper \cite{MR3024821}. 
Finally, we prove main theorems in Section~\ref{section:prf of main thm}.

\textbf{Notation.}
For a number field $E$ and a place $\lambda$ of $E$, we denote by $\overline{E}_\lambda$ a fixed algebraic closure of $E_\lambda$.

For a scheme $X$, we denote by $\lvert X\rvert$ the set of closed points of $X$. We equip finite subsets of $\lvert X\rvert$ with the reduced scheme structure. 
We denote the residue field of a point $x$ of $X$ by $k(x)$.
An \'etale covering over $X$ means a scheme which is finite and \'etale over $X$.

For a number field $K$ and an $\calO_K$-scheme $X$, $X_K$ denotes the generic fiber of $X$ regarded as a $K$-scheme.
In particular, for a $K$-algebra $R$, $X_K(R)$ means $\Hom_K(\Spec R, X_K)$, 
not $\Hom_{\Z}(\Spec R, X_K)$. We also write $X(R)$ instead of $X_K(R)$.

For simplicity, we omit base points of  fundamental groups and
we often change base points implicitly in the paper.

\textbf{Acknowledgments.}
I would like to thank Takeshi Saito for introducing
me to Drinfeld's paper and Mark Kisin for suggesting this topic to me.
This work owes a significant amount to the work of Drinfeld.
He also gave me important suggestions on the manuscript, which simplified some of the main arguments.
I would like to express my sincere admiration and gratitude to Drinfeld for his work and comments. 
Finally, it is my pleasure to thank George Boxer for a clear explanation 
of potential automorphy and many suggestions on the manuscript,
and Yunqing Tang for a careful reading of the manuscript
and many useful remarks.

\section{Existence of totally real and CM curves via the theorem of Moret-Bailly }
\label{section:tatally real curves}
First we recall the theorem of Moret-Bailly.

\begin{thm}[{Moret-Bailly, \cite[II]{MR1005158}}] 
\label{thm:Moret-Bailly}
Let $K$ be a number field. We consider a quadruple $(X_K, \Sigma, \{M_v\}_{v\in \Sigma}, \{\Omega_v\}_{v\in \Sigma})$ consisting of
\begin{enumerate}
 \item a geometrically irreducible, smooth and separated $K$-scheme $X_K$,
 \item a finite set $\Sigma$ of places of $K$,
 \item a finite Galois extension $M_v$ of $K_v$ for every $v\in\Sigma$, and
 \item a nonempty $\Gal(M_v/K_v)$-stable open subset $\Omega_v$ of $X_K(M_v)$ with respect to $M_v$-topology.
\end{enumerate}
Then there exist a finite extension $L$ of $K$ and an $L$-rational point $x\in X_K(L)$ satisfying the following two conditions:
\begin{itemize}
 \item For every $v\in \Sigma$, $L$ is $M_v$-split, that is, $L\otimes_KM_v\cong M_v^{[L\colon K]}$.
 \item The images of $x$ in $X_K(M_v)$ induced from embeddings $L\hra M_v$ lie in $\Omega_v$.
\end{itemize}
\end{thm}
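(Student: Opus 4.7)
The plan is to reduce the problem, through a sequence of geometric manipulations, to the case $X_K=\mathbb{P}^1_K$, and there invoke the capacity-theoretic existence of algebraic points with prescribed local behavior (Rumely's theorem).

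First I would reduce to $X_K$ affine: shrinking each $\Omega_v$ if necessary, one finds a common affine open subscheme $U \subset X_K$ whose $M_v$-points meet each $\Omega_v$, so replacing $(X_K,\Omega_v)$ with $(U, U(M_v)\cap \Omega_v)$ preserves the hypotheses. Next, I would reduce to $\dim X_K = 1$ by a Bertini-type argument: embed $X_K \hookrightarrow \mathbb{A}^N_K$, pick any $M_v$-rational point $y_v \in \Omega_v$ for each $v\in\Sigma$, and cut $X_K$ by sufficiently general hyperplanes until one reaches a smooth geometrically irreducible curve $C_K \subset X_K$ which approximates $y_v$ in the $M_v$-topology. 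By smoothness and the implicit function theorem, $C_K(M_v)\cap \Omega_v$ is nonempty and Galois-stable open. Then, composing with a finite map $C_K \to \mathbb{P}^1_K$ (obtained from a smooth projective compactification and projection to coordinates), one transports the data to $\mathbb{P}^1_K$, at the cost of enlarging $M_v$ by a fixed finite extension and slightly shrinking the $\Omega_v$.

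The heart of the argument is then the case $X_K=\mathbb{P}^1_K$: given a Galois-stable nonempty open $\Omega_v\subset \mathbb{P}^1(M_v)$ for each $v\in\Sigma$, produce a finite extension $L/K$ that is $M_v$-split for all $v\in\Sigma$, together with a point $x\in\mathbb{P}^1(L)$ whose $M_v$-images lie in $\Omega_v$. This is handled by Rumely's adelic capacity theory on $\mathbb{P}^1$: the set of algebraic points with height bounded by $H$ equidistribute (as $H\to\infty$) according to Fekete-Szegő-type measures at each place, so an adelic open set of sufficiently large capacity contains many global algebraic points. The splitting requirement on $L$ is enforced by arranging the local adelic open sets so that the Galois orbit of $x$ over each $K_v$ decomposes as a union of transitive $\mathrm{Gal}(M_v/K_v)$-orbits on $\Omega_v$; this is possible precisely because $\Omega_v$ is $\mathrm{Gal}(M_v/K_v)$-stable and nonempty.

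The main obstacle is this final step, namely the construction of $L$-rational points on $\mathbb{P}^1$ with prescribed local behavior at all places of $\Sigma$ simultaneously and with the splitting condition on $L$. This is the substantive content of Moret-Bailly's work and rests on non-trivial potential theory on the Berkovich projective line, together with its archimedean counterpart, which together quantify when an adelic subset of $\mathbb{P}^1(\overline{K})$ contains infinitely many Galois orbits. The dimension reductions above are standard, but verifying that the reductions preserve the splitting condition (in particular that enlarging $M_v$ by a fixed extension does not destroy it) requires some care and is the point at which I would be most worried about hidden subtleties.
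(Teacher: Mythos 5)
The paper does not actually prove this statement: it is quoted as a theorem of Moret-Bailly, and the remark following it merely explains that this formulation follows from Th\'eor\`eme~1.3 of \cite[II]{MR1005158} after spreading $X_K$ out to an integral model $X\to B$ over a small open $B\subset \Spec\calO_K$ so that $(X\to B, \Sigma, \{M_v\}, \{\Omega_v\})$ becomes an ``incomplete Skolem datum'' in the sense of D\'efinition~1.2 loc.~cit. You are attempting instead to reprove Moret-Bailly's theorem itself from scratch, which is a different and far more ambitious task than what the paper requires; the paper's route is just a routine translation between two formulations of the same cited theorem.

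Granting that ambition, the sketch does follow the broad shape of Moret-Bailly's actual proof (reduce to curves, then to $\mathbb{P}^1$, then invoke Fekete--Szeg\H{o}/Rumely--type capacity results), but there is a genuine gap, and you in fact point at it yourself: when you push data forward along a finite map $C_K\to\mathbb{P}^1_K$ you propose to replace $M_v$ by a finite extension $M_v'$. The splitting condition is not preserved under this operation. If $L\otimes_K M_v'\cong (M_v')^{[L:K]}$, this says only that $L$ admits $[L:K]$ embeddings into $M_v'$; it does not give you any embedding of $L$ into the smaller field $M_v$, so $L$ need not be $M_v$-split. This is not a bookkeeping subtlety to be checked later -- as written, the reduction loses precisely the conclusion you are trying to prove. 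Any rescue would need to control the fields of definition of preimages along $C_K\to\mathbb{P}^1_K$ (e.g.\ by choosing a map that is completely split over the chosen adelic neighborhoods), which is exactly the sort of delicate local analysis that Moret-Bailly builds into the notion of a Skolem datum. Similar care is also needed in the Bertini step: the generic hyperplane slice of a variety with an $M_v$-point need not itself have an $M_v$-point close to $y_v$ unless the slicing hyperplanes are themselves $K$-rational and chosen to pass through (an approximation of) $y_v$, and one must then re-verify smoothness and geometric irreducibility of the slice. None of these obstructions is insurmountable, but they are the actual content of Moret-Bailly's argument, and the proposal does not resolve them.
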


\begin{rem} 
 Our formulation is slightly different from Moret-Bailly's, but 
Theorem~\ref{thm:Moret-Bailly}
is a simple consequence of Th\'eor\`eme~1.3 of \cite[II]{MR1005158}.
Namely, we can always find an integral model $f\colon X\ra B$
of $X_K\ra \Spec K$ over a sufficiently small open subscheme $B$ of $\Spec \calO_K$
such that 
$(X\ra B, \Sigma, \{M_v\}_{v\in \Sigma}, \{\Omega_v\}_{v\in \Sigma})$
is an incomplete Skolem datum (see D\'efinition~1.2 of \cite[II]{MR1005158}).
Then Theorem~\ref{thm:Moret-Bailly} follows from Th\'eor\`eme~1.3 of \cite[II]{MR1005158} applied to this incomplete Skolem datum.
\end{rem}

Since the set $\Sigma$ can contain infinite places,
the above theorem implies the existence of totally real or CM valued points.

\begin{lem}\label{lem:totally real points}\hfill
\begin{enumerate}
 \item Let $K$ be a totally real field and $X_K$ a geometrically irreducible smooth $K$-scheme such that $X_K(\R)\neq\emptyset$ for every real place $K\hra \R$. 
For any dense open subscheme $U_K$ of $X_K$, there exists a totally real extension $L$ of $K$ such that $U_K(L)\neq\emptyset$.
 \item Let $F$ be a CM field and $Z_F$ a geometrically irreducible smooth $F$-scheme.
For any dense open subscheme $V_F$ of $Z_F$, there exists a CM extension $L$ of $F$ such that $V_F(L)\neq\emptyset$.
\end{enumerate}
\end{lem}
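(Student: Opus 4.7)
The plan is to deduce both parts from Theorem~\ref{thm:Moret-Bailly}, with part (i) serving as an input for part (ii).

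For part (i), I would apply Theorem~\ref{thm:Moret-Bailly} to the smooth geometrically irreducible $K$-scheme $U_K$. Take $\Sigma$ to be the set of real places of $K$; for each $v\in\Sigma$ set $M_v=K_v=\R$ (so $\Gal(M_v/K_v)$ is trivial and the stability requirement on $\Omega_v$ is automatic) and $\Omega_v=U_K(\R)$. The only nontrivial verification is that $\Omega_v$ is nonempty. Since $X_K$ is smooth and geometrically irreducible with $X_K(\R)\neq\emptyset$, the real-analytic space $X_K(\R)$ is a manifold of pure dimension $\dim X_K$, while the closed subscheme $X_K\setminus U_K$ has strictly smaller dimension, so its $\R$-points form a closed subset of lower real dimension. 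Hence $U_K(\R)$ is a nonempty open subset of $X_K(\R)$. Moret-Bailly then produces a finite extension $L/K$ with an $L$-point of $U_K$ and satisfying $L\otimes_K\R\cong \R^{[L\colon K]}$ at every real place $v$; this splitting condition says exactly that $L$ is totally real.

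For part (ii), let $F^+$ be the maximal totally real subfield of the CM field $F$, so that $F/F^+$ is imaginary quadratic. Set $W:=\Res_{F/F^+}V_F$, a smooth $F^+$-scheme with universal property $W(R)=V_F(R\otimes_{F^+}F)$ for every $F^+$-algebra $R$. I would check that $W$ is geometrically irreducible: via the two embeddings of $F$ into $\overline{F^+}$ the algebra $F\otimes_{F^+}\overline{F^+}$ splits as $\overline{F^+}\times\overline{F^+}$, so $W\otimes_{F^+}\overline{F^+}$ is identified with the product over $\overline{F^+}$ of two base changes of $V_F$, each irreducible by hypothesis, and a product of irreducible varieties over an algebraically closed field is irreducible. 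For every real place $v$ of $F^+$ we have $W(\R)=V_F(F\otimes_{F^+}\R)=V_F(\C)\neq\emptyset$ since $V_F$ is a nonempty smooth $F$-scheme. Applying part (i) to $W$ (with $X_K=U_K=W$, base field $F^+$) produces a totally real extension $L^+/F^+$ and a point in $W(L^+)=V_F(L)$, where $L:=L^+\otimes_{F^+}F$. Writing $F=F^+(\sqrt{-d})$ with $d\in F^+$ totally positive, the element $-d$ remains totally negative and therefore a non-square in $L^+$, so $L$ is a field, is an imaginary quadratic extension of the totally real field $L^+$, and hence is a CM extension of $F$, as required.

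The only real subtlety, modest in both parts, is verifying the inputs to Moret-Bailly and the transfer along the Weil restriction: nonemptiness of $U_K(\R)$ in part (i), and the geometric irreducibility of $W$ together with the field-ness and CM property of $L=L^+\otimes_{F^+}F$ in part (ii). All of these reduce to standard dimension arguments and to standard facts about separable quadratic extensions.
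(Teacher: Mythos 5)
Your proof is correct and takes essentially the same route as the paper: Moret-Bailly applied at the real places for part (i), and Weil restriction from $F$ to $F^+$ to reduce part (ii) to part (i), with your dimension argument for $U_K(\R)\neq\emptyset$ playing the role of the paper's implicit-function-theorem remark. The one point you leave implicit, which the paper handles by an initial reduction, is that Theorem~\ref{thm:Moret-Bailly} requires a \emph{separated} scheme, so one should first replace $U_K$ (resp.\ $V_F$) by a dense affine open before invoking it; this also guarantees that the Weil restriction $\Res_{F/F^+}V_F$ exists as a scheme.
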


\begin{proof}
In either setting, we may assume that the scheme is separated over the base field by replacing it by an open dense subscheme.

First we prove (i).
For every real place $v\colon K\hra \R$, let $U_v=U_K\cap (X_K\otimes_{K,v}\R)(\R)$.
It follows from the assumptions and the implicit function theorem that
$U_v$ is a nonempty open subset of $(X_K\otimes_{K,v}\R)(\R)$
with respect to real topology.

We apply the theorem of Moret-Bailly 
 to the datum $\bigl(X_K,\{v\}, \{\R\}_v,\{U_v\}_v\bigr)$ to find a finite extension $L$ of $K$ and a point $x\in X_K(L)$ such that $L\otimes_K\R\cong \R^{[L:K]}$ and the images of $x$ induced from real embeddings $L\hra \R$ above $v$ lie in $U_v$.
Then $L$ is totally real and $x\in U_K(L)$. Hence $U_K(L)\neq \emptyset$.

Next we prove (ii).
Let $F^+$ be the maximal totally real subfield of $F$.
Define
$Z^+_{F^+}$ (resp.~ $V^+_{F^+}$)
to be the Weil restriction $\Res_{F/F^+}Z_F$
(resp.~ $\Res_{F/F^+}V_F$).
Denote the nontrivial element of $\Gal(F/F^+)$ by $c$ 
and $Z_F\otimes_{F,c}F$ by $c^\ast Z_F$.
Then we have
 $Z^+_{F^+}\otimes_{F^+}F\cong Z_F\times_{F}c^\ast Z_F$,
and this scheme is geometrically irreducible over $F$.
Thus $Z^+_{F^+}$ is a geometrically irreducible smooth $F^+$-scheme, and
$V^+_{F^+}$ is dense and open in $Z^+_{F^+}$.
Moreover, for every real place $F^+\hra\R$, we can extend it to a complex place $F\hra \C$ and get $F\otimes_{F^+}\R\cong \C$.
Hence we have $Z^+_{F^+}(\R)=Z_F(\C)\neq \emptyset$.
Therefore we can apply (i) to the triple $(F^+, Z^+_{F^+}, V^+_{F^+})$ 
and find a totally real extension $L^+$ of $F^+$ such that 
$V_F(L^+\otimes_{F^+}F)=V^+_{F^+}(L^+)\neq\emptyset$.
Since $L^+\otimes_{F^+}F$ is a CM extension of $F$, this completes the proof.
\end{proof}

This lemma leads to the following definitions.

\begin{defn}
A \emph{totally real curve} is an open subscheme of the spectrum of the ring of integers of a totally real field.
A \emph{CM curve} is an open subscheme of the spectrum of the ring of integers of a CM field.
\end{defn}

\begin{defn}
Let $K$ be a totally real field and $X$ an irreducible regular $\calO_K$-scheme.
We say that $X$ is an $\calO_K$-scheme \emph{with enough totally real curves} 
if $X$ is flat and of finite type over $\calO_K$ with geometrically irreducible generic fiber and $X_K(\R)\neq \emptyset$ for every real place $K\hra \R$.
\end{defn}

Now we introduce some notation and state our approximation theorems.

\begin{defn}
Let $g\colon X\ra Y$ be a morphism of schemes.
For $x\in X$, consider the tangent space $T_x X=\Hom_{k(x)}\bigl(\fkm_x/\fkm^2_x, k(x)\bigr)$ at $x$, where $\fkm_x$ denotes the maximal ideal of the local ring at $x$. This contains $T_x(X_{g(x)})$, where $X_{g(x)}=X\otimes_Yk\bigl(g(x)\bigr)$.
A one-dimensional subspace $l$ of $T_x X$ is said to be \emph{horizontal} 
(with respect to $g$) if $l$ does not lie in the subspace $T_x(X_{g(x)})$.
\end{defn}

\begin{defn}
Let $X$ be a connected scheme and $Y$ a generically \'etale $X$-scheme. 
A point $x\in X(L)$ with some field $L$ is said to \emph{be inert in} $Y\ra X$ if for each irreducible component $Y_\alpha$ of $Y$, $(\Spec L)\times_{x, X}Y_\alpha$ is nonempty and connected.
\end{defn}

\begin{thm} 
\label{thm:Dr-thm2.15}
Let $K$ be a totally real field and $X$ an irreducible smooth separated $\calO_K$-scheme with enough totally real curves.
Consider the following data:
\begin{enumerate}
 \item a flat $\calO_K$-scheme $Y$ which is generically \'etale over $X$;
 \item a finite subset $S\subset \lvert X\rvert$ such that  $S\ra\Spec \calO_K$ is injective;
 \item a one-dimensional subspace $l_s$ of $T_sX$ for every $s\in S$.
\end{enumerate}

Then there exist a totally real curve $C$ with fraction field $L$, a morphism $\varphi\colon C\ra X$ and a section $\sigma\colon S\ra C$ of $\varphi$ over $S$ such that $\varphi(\Spec L)$ is inert in $Y\ra X$  and  $\Image (T_{\sigma(s)}C\ra T_sX)=l_s$ for every $s\in S$.
\end{thm}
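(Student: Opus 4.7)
The plan is to apply the Moret-Bailly theorem (Theorem~\ref{thm:Moret-Bailly}) to $X_K$ after encoding the tangent-direction data via a blow-up, and to combine this with Hilbert irreducibility to ensure the inertness condition.

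First I would let $\pi \colon \tilde X \to X$ be the blow-up of $X$ along $S$. Since $S$ consists of regular points of the smooth $\calO_K$-scheme $X$, $\tilde X$ is again smooth over $\calO_K$, and the exceptional fiber over each $s \in S$ is $\mathbb{P}(T_s X)$. The line $l_s$ then determines a closed point $\tilde s \in \tilde X$ with residue field $k(s)$. Because every $s \in S$ lies over a finite prime $v_s$ of $\calO_K$ and the $v_s$ are pairwise distinct (the injectivity of $S \to \Spec \calO_K$), we have $\tilde X_K = X_K$, which is geometrically irreducible, smooth, and separated over $K$.

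Next I would apply Moret-Bailly to $X_K = \tilde X_K$ with $\Sigma$ the union of the real places of $K$ and $\{v_s : s \in S\}$. At each real $v$ take $M_v = \R$ and $\Omega_v = X_K(\R)$, which is nonempty by the enough-totally-real-curves hypothesis. At each $v_s$, take $M_{v_s}$ to be the unramified extension of $K_{v_s}$ with residue field $k(s)$ (Galois over $K_{v_s}$), and let $\Omega_{v_s} \subset X_K(M_{v_s})$ consist of those $M_{v_s}$-points whose (unique) extension to an $\calO_{M_{v_s}}$-point of $\tilde X$ sends the closed point to $\tilde s$; by smoothness of $\tilde X$ at $\tilde s$, this is a nonempty $\Gal(M_{v_s}/K_{v_s})$-stable open ball. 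To simultaneously force the resulting point to be inert in $Y \to X$, I would invoke a Hilbert-irreducibility refinement of Moret-Bailly: the non-inert locus in $X_K(\overline{K})$ is a thin set in the sense of Serre, and one arranges the $L$-point produced by Moret-Bailly to land outside it.

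From the resulting totally real $L/K$ and $x \in X_K(L)$, the curve is recovered as follows. Let $P \in X_K$ be the image of $x$, with residue field $K(P) \subset L$ (totally real, as any subfield of a totally real field). The closure $\overline{\{P\}} \subset X$ is a one-dimensional irreducible closed subscheme, flat over $\calO_K$; its normalization in $K(P)$ is an open subscheme $C_0 \subset \Spec \calO_{K(P)}$ with a natural morphism $\varphi \colon C_0 \to X$. For each $s \in S$, the local condition at $v_s$ combined with the valuative criterion applied to the separated $\calO_K$-scheme $\tilde X$ produces a place $u$ of $K(P)$ above $v_s$ at which $\varphi$ passes through $s$ with tangent image exactly $l_s$; set $\sigma(s) := u$ and restrict $C_0$ to an open $C$ containing all $\sigma(s)$. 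Inertness of $\varphi(\Spec K(P)) = P$ in $Y \to X$ comes from the Hilbert-type refinement. The main obstacle is reconciling Moret-Bailly's local-prescription approach with the Hilbert-style inertness condition: Drinfeld's analogous result over $\F_p$ needs only Hilbert irreducibility, while over $\calO_K$ the technical novelty lies in strengthening Moret-Bailly to simultaneously enforce totally real splitting, the tangent-direction data at $S$, and thin-set avoidance.
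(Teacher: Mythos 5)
Your approach diverges from the paper's and, as written, has two genuine gaps.

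\textbf{The non-horizontal case breaks the blow-up construction.} You assert that $\tilde X = \mathrm{Bl}_S X$ is smooth over $\calO_K$; this is false. Blowing up a smooth arithmetic scheme at a closed point of a special fiber produces a scheme that is regular but not smooth over the base: in a chart of the exceptional divisor the special fiber acquires normal-crossing singularities (e.g.\ blowing up $\Spec \Z[t]$ at $(p,t)$ yields a chart $\Z[t,u]/(p-tu)$ whose fiber over $p$ is $\Spec \F_p[t,u]/(tu)$). Smoothness of $\tilde X$ at $\tilde s$ holds exactly when $l_s$ is horizontal, i.e.\ not contained in $T_s(X_{v(s)})$. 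Worse, your choice of $M_{v_s}$ unramified over $K_{v(s)}$ makes $\Omega_{v_s}$ \emph{empty} in the non-horizontal case: any $\calO_{M_{v_s}}$-point of $\tilde X$ reducing to $\tilde s$ must, in a suitable chart, satisfy $u_0 t_1 = \pi_{K_{v(s)}}$ with $u_0, t_1 \in \fkm_{M_{v_s}}$, forcing $v_{M_{v_s}}(\pi_{K_{v(s)}}) \geq 2$, which is impossible over an unramified extension. This is precisely why the paper's proof splits into a horizontal case (where $M_s = M'_s$ is unramified and Galois) and a non-horizontal case (where $M_s$ is taken to be a ramified Galois closure, and the local homomorphism $\hat{\calO}_{X,s} \ra \calO_{M_s}$ is built by hand from a $\pi$-adic expansion to preserve the tangent line after passing from $M'_s$ to $M_s$).

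\textbf{The inertness condition is left as a black box.} You invoke ``a Hilbert-irreducibility refinement of Moret-Bailly'' for thin-set avoidance, flagging it yourself as the main obstacle. No such refinement is needed. The paper encodes the inertness condition as honest local conditions inside the Moret-Bailly quadruple: for each proper subgroup $H_{ij}$ of each Galois group $G_i$ it chooses a finite place $v_{ij}$ (via Lang-Weil and Chebotarev when the scheme $W_i/H_{ij}$ acquires constant-field extension, via Proposition 3.5.2 of Serre's book otherwise) and an open $V_{ij} \subset X(K_{v_{ij}})$ so that any $L$-point landing in all the $V_{ij}$ with $L$ split at all $v_{ij}$ is automatically inert (Claim~\ref{claim:choice of v_ij} and Lemma~\ref{lem:inert criterion for a cover}). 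The whole theorem then falls out of a single application of Theorem~\ref{thm:Moret-Bailly}, with no thin-set machinery. The blow-up idea is a legitimate alternative for packaging tangent data once the ramified local models and the local-condition approach to inertness are in place, but as you have written it, both of these crucial ingredients are missing.
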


\begin{proof}
We will use the theorem of Moret-Bailly to find the desired curve.
Note that we can replace $Y$ by any dominant \'etale $Y$-scheme. 

Let $v$ denote the structure morphism $X\ra \Spec \calO_K$.
Take an open subscheme $U\subset X$ 
such that $v(U)\cap v(S)=\emptyset$ and the morphism $Y\times_X U\ra U$ is finite and \'etale.
Replacing each connected component of $Y\times_X U$ by its Galois closure, we may assume that each connected component of $Y\times_X U$ is Galois over $U$.
Write $Y\times_X U=\coprod_{1\leq i\leq k}W_i$ as the disjoint union of connected components and denote by $G_i$ the Galois group of the covering $W_i\ra U$.
Let $H_{i1},\ldots,H_{ir_i}$ be all the proper subgroups of $G_i$.

We will choose a quadruple of the form
\[
\bigl(X_K, 
\{v_{ij}\}_{i,j}\cup\{v(s)\}_{s\in S}\cup \{v_{\infty,i}\}_i,
\{K_{v_{ij}}\}\cup \{M_s\}\cup \{\R\},
\{V_{ij}\}\cup \{V_s\}\cup \{V_{\infty,i}\}\bigr).
\]

First we choose $v_{ij}$ and $V_{ij}$ 
($1\leq i\leq k, 1\leq j\leq r_i$) which 
control the inerting property.

\begin{claim}\label{claim:choice of v_ij}
There exist a finite set $\{v_{ij}\}_{i,j}$ of finite places of $K$
and a nonempty open subset $V_{ij}$ of $X(K_{v_{ij}})$ 
with respect to $K_{v_{ij}}$-topology for each $(i,j)$
such that

\begin{enumerate}
 \item for any finite extension $L$ of $K$ and any $L$-rational point
$x\in X(L)$, $x$ is inert in $Y\ra X$ if 
$L$ is $K_{v_{ij}}$-split and if
all the images of $x$ under the induced maps 
$X(L)\ra X(K_{v_{ij}})$ lie in $V_{ij}$, and 
 \item $v_{ij}$ are different from any element of $v(S)$
regarded as a finite place of $K$.
\end{enumerate}
\end{claim}

\begin{proof}[Proof of Claim~\ref{claim:choice of v_ij}]
For each $i=1,\ldots,k$ and $j=1,\ldots, r_i$,
let $\pi_{H_{ij}}$ denote the induced morphism $W_i/H_{ij}\ra X$ and
let $M_{ij}$ be the algebraic closure of $K$ 
in the field of rational functions of $W_i/H_{ij}$.
Then we have a canonical factorization $W_i/H_{ij}\ra \Spec \calO_{M_{ij}}\ra \Spec \calO_K$.

If $M_{ij}=K$, then the generic fiber $(W_i/H_{ij})_K$ is geometrically integral over $K$.
It follows from Proposition~3.5.2 of \cite{MR2363329} that there are infinitely many finite places $v_0$ of $K$ such that $U(K_{v_0})\setminus \pi_{H_{ij}}\bigl(W_i/H_{ij}(K_{v_0})\bigr)$ is a nonempty open subset of $U(K_{v_0})$.
Thus choose such a finite place $v_{ij}$ and put 
\[
 V_{ij}=U(K_{v_{ij}})\setminus \pi_{H_{ij}}\bigl(W_i/H_{ij}(K_{v_{ij}})\bigr).
\]

Next consider the case where $M_{ij}\neq K$.
The Lang-Weil theorem and the Chebotarev density theorem show that there are infinitely many finite places $v_0$ of $K$ such that $U(K_{v_0})\neq \emptyset$ and $v_0$ does not split completely in $M_{ij}$, that is, $M_{ij}\otimes_KK_{v_0}\not\cong K_{v_0}^{[M_{ij}:K]}$ 
(see Propositions~3.5.1 and~3.6.1 of \cite{MR2363329} for example).
In this case, choose such a finite place $v_{ij}$ and put 
\[
 V_{ij}=U(K_{v_{ij}}).
\]

It is obvious to see that we can choose $v_{ij}$ satisfying condition (ii).
We now show that these $v_{ij}$ and $V_{ij}$ satisfy condition (i).
Take $L$ and $x\in X(L)$ as in condition (i).
By the lemma below (Lemma \ref{lem:inert criterion for a cover}), it suffices to prove that $x\not\in \pi_{H_{ij}}\bigl(W_i/H_{ij}(L)\bigr)$ for any $H_{ij}$.

When $M_{ij}=K$, this is obvious because the images of $x$ under the maps $X(L)\ra X(K_{v_{ij}})$ lie in $V_{ij}=U(K_{v_{ij}})\setminus \pi_{H_{ij}}\bigl(W_i/H_{ij}(K_{v_{ij}})\bigr)$.
When $M_{ij}\neq K$, we know that $M_{ij}\otimes_KK_{v_{ij}}$ is not $K_{v_{ij}}$-split. Since $L$ is assumed to be $K_{v_{ij}}$-split, $M_{ij}$ cannot be embedded into $L$.
On the other hand, we have a canonical factorization $W_i/H_{ij}\ra \Spec \calO_{M_{ij}}$.
Therefore $W_i/H_{ij}(L)=\emptyset$. 
Thus $x$ is inert in $Y\ra X$ in both cases.
\end{proof}

Next we choose a finite Galois extension $M_s$ of $K_{v(s)}$ and a $\Gal(M_s/K_{v(s)})$-stable nonempty open subset $V_s$ of $X(M_s)$ with respect to $M_s$-topology to make a totally real curve pass through $s$ in the tangent direction $l_s$. 
Here $K_{v(s)}$ denotes the completion of $K$ with respect to the finite place $v(s)$ of $K$.
Let $\hat{\calO}_{X,s}$ denote the completed local ring of $X$ at $s\in S$. 
Since $\hat{\calO}_{X,s}$ is regular, we can find a regular one-dimensional closed subscheme $\Spec R_s \subset \Spec \hat{\calO}_{X,s}$ which is tangent to $l_s$ and satisfies $R_s\otimes_{\calO_K}K\neq\emptyset$ (see  Lemma A.6 of \cite{MR3024821}). 

It follows from the construction that $R_s$ is a complete discrete valuation ring which is finite and flat over $\calO_{K_{v(s)}}$ and has residue field $k(s)$.
Let $M'_s$ be the fraction field of $R_s$.
For each $s\in S$ we first choose $M_s$ and a local homomorphism $\calO_{X,s}\ra \calO_{M_s}$. There are two cases.

If $l_s$ is horizontal, then $R_s$ is unramified over $\calO_{K_{v(s)}}$ and hence $M'_s$ is Galois over $K_{v(s)}$. Put $M_s:=M'_s$ in this case.
Then we have a natural local homomorphism
$\calO_{X,s}\ra\hat{\calO}_{X,s}\ra \calO_{M_s}$.

If $l_s$ is not horizontal, then $R_s$ is ramified over $\calO_{K_{v(s)}}$.
Let $K'_{v(s)}$ be the maximal unramified extension of $K_{v(s)}$ in $M'_s$ 
and $M_s$ the Galois closure of $M'_s$ over $K_{v(s)}$. 
Then both $K'_{v(s)}$ and $M_s$ have the same residue field $k(s)$.

We construct a local homomorphism
$\hat{\calO}_{X,s}\ra \calO_{M_s}$ in this setting.
Since $X$ is smooth over $\calO_K$, the ring $\hat{\calO}_{X,s}$ is isomorphic to 
the ring of formal power series  $\calO_{K'_{v(s)}}[[t_1,\ldots,t_m]]$ for some $m$ and we identify these rings.

Let $u_i\in \calO_{M'_s}$ denote the image of $t_i$ under the homomorphism
$\calO_{K'_{v(s)}}[[t_1,\ldots,t_m]] =\hat{\calO}_{X,s}\ra R_s=\calO_{M'_s}$. 
Let $\pi$ (resp.~ $\varpi$) be a uniformizer of $\calO_{M'_s}$ (resp.~ $\calO_{M_s}$) and consider $\pi$-adic expansion $u_i=\sum_{j=0}^\infty a_{ij}\pi^j$.
Since $\hat{\calO}_{X,s}\ra\calO_{M'_s}$ is a local homomorphism, we have $a_{i0}=0$ for each $i$. 

Consider the differential of $\Spec \calO_{M'_s}=\Spec R_s\ra \Spec \hat{\calO}_{X,s}$ at the closed point. The tangent vector $\frac{\partial}{\partial \pi}$ is sent to $\sum_{i=1}^ma_{i1}\frac{\partial}{\partial t_i}$ under this map, and the latter spans the tangent line $l_s$.

Define a local homomorphism
$\hat{\calO}_{X,s}\ra \calO_{M_s}$ by sending $t_i$
to $\sum_{j=1}^\infty a_{ij}\varpi^j$.  
Then the image of the differential of the corresponding morphism
$\Spec \calO_{M_s}\ra X$ at the closed point is $l_s$ by the same computation as above.

In either case, we have chosen $M_s$ and a homomorphism $\calO_{X,s}\ra \calO_{M_s}$.
Let $\hat{s}\in X(\calO_{M_s})$ be the point induced by 
the homomorphism.
Note that $X(\calO_{M_s})$ is an open subset of $X(M_s)$ by separatedness.
Let $\alpha\colon X(\calO_{M_s})\ra X(\calO_{M_s}/\fkm_{M_s}^2)$ be the reduction map, where $\fkm_{M_s}$ denotes the maximal ideal of $\calO_{M_s}$.
Define $V'_s=\alpha^{-1}\bigl(\alpha(\hat{s})\bigr)$, which is a nonempty open subset of $X(M_s)$,  
and put 
\[
 V_s=\bigcup_{\sigma}\sigma(V'_s),
\]
 where 
$\sigma$ runs over all the elements of $\Gal(M_s/K_{v(s)})$.
Since $\Gal(M_s/K_{v(s)})$ acts continuously on $X(M_s)$, 
$V_s$ is a nonempty $\Gal(M_s/K_{v(s)})$-stable open subset of $X(M_s)$.

Finally, let $v_{\infty,1},\ldots,v_{\infty,n}$  be the real places of $K$ and put 
\[
 V_{\infty,i}=X(\R)
\]
 for each $i=1,\ldots,n$.
This is nonempty by our assumption.

It follows from the theorem of Moret-Bailly (Theorem~\ref{thm:Moret-Bailly}) 
that there exist a finite extension $L$ of $K$ and an $L$-rational point $x\in X(L)$ satisfying the following properties:
\begin{enumerate}
 \item $L\otimes_KK_{v_{ij}}$ is $K_{v_{ij}}$-split and $x$ goes into $V_{ij}$
under any embedding $L\hra K_{v_{ij}}$.
 \item  $L\otimes_KM_s$ is $M_s$-split and $x$ goes into $V_s$
under any embedding $L\hra M_s$.
 \item $L$ is totally real.
\end{enumerate}

We can spread out the $L$-rational point $x\colon \Spec L\ra X$ to a morphism $\varphi\colon C\ra X$ where $C$ is a totally real curve with fraction field $L$. By property (ii), we can choose $C$ and $\varphi$ so that all the points of $\Spec \calO_L$ above $v(S)\subset \Spec \calO_K$ are contained in $C$. 
Claim~\ref{claim:choice of v_ij} shows that $x$ is inert in $Y\ra X$.
Thus it remains to prove that there exists a section $\sigma$ of $\varphi$ over $S$ such that $\Image (T_{\sigma(s)}C\ra T_sX)=l_s$ for every $s\in S$

It follows from property (ii) and the definition of $V_s$ that 
there exists an embedding $L\hra M_s$ such that
the image of $x$ under the associated map $X(L)\ra X(M_s)$ lies in $V'_s$.
Let $s'\in \Spec \calO_L$ be the closed point corresponding to this embedding.
Then we have $s'\in C$, $k(s')=k(s)$, and 
$\Image (T_{s'}C\ra T_sX)=l_s$.
Hence we can define a desired section of $\varphi$ over $S$.
\end{proof}

\begin{lem} \label{lem:inert criterion for a cover}
Let $L$ be a field, U a locally noetherian connected scheme and $\pi\colon W\ra U$ a Galois covering with Galois group $G$.
For any subgroup $H\subset G$, let $\pi_H$ denote the induced morphism $W/H\ra U$.
An $L$-valued point of $X$ is inert in $\pi$ if and only if it lies in $U(L)\setminus \bigcup_{H\subsetneq G}\pi_H\bigl(W/H(L)\bigr)$.
\end{lem}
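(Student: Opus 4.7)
The plan is to reduce the statement to a purely group-theoretic fact via the standard dictionary between finite \'etale covers and $G$-sets. Fix a geometric point $\bar{x}$ above $x$ and an identification of the geometric fiber $W_{\bar{x}}$ with $G$ as a right $G$-set, which exists because $W\to U$ is a connected $G$-torsor. Since the $G$-action on $W_{\bar{x}}$ commutes with the continuous action of $\Gamma:=\Gal(L^{\mathrm{sep}}/L)$, the $\Gamma$-action is given by left multiplication through a continuous homomorphism $\rho\colon\Gamma\to G$.

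In these terms, the connected components of $(\Spec L)\times_{x,U}W$ correspond to the $\Gamma$-orbits on $W_{\bar{x}}\cong G$, namely the right cosets $\rho(\Gamma)\backslash G$; hence $x$ is inert in $\pi$ if and only if $\rho(\Gamma)=G$. Similarly, $(\Spec L)\times_{x,U}(W/H)$ is identified with $G/H$ with $\Gamma$ acting by $\gamma\cdot gH=\rho(\gamma)gH$, and its $L$-rational points are the $\Gamma$-fixed cosets. A coset $gH$ is $\Gamma$-fixed precisely when $g^{-1}\rho(\Gamma)g\subseteq H$, so $x\in\pi_H(W/H(L))$ if and only if some conjugate of $\rho(\Gamma)$ is contained in $H$.

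Combining these two reformulations, the lemma reduces to the tautology: $\rho(\Gamma)=G$ if and only if no conjugate of $\rho(\Gamma)$ lies in any proper subgroup $H\subsetneq G$. The forward direction is trivial, and for the converse one simply takes $H:=\rho(\Gamma)$ itself whenever $\rho(\Gamma)\subsetneq G$. The only real task in the argument is bookkeeping---carefully matching connected components and $L$-points of the relevant fibers with $\Gamma$-orbits and fixed cosets---and there is no serious obstacle beyond keeping left and right actions straight.
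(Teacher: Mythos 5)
Your argument is correct and is essentially the same as the paper's: both fix a geometric point to obtain a homomorphism from the absolute Galois group of $L$ to $G$, identify inertness with surjectivity of this homomorphism, and identify $x\in\pi_H(W/H(L))$ with the condition that some conjugate of the image is contained in $H$. You spell out the \'etale-covers-to-$\Gamma$-sets dictionary more explicitly than the paper does, but the substance is identical.
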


\begin{proof}
Let $x$ denote the $L$-valued point.
Choose a point of $W$ above $x$ and fix a geometric point above it.
This also defines a geometric point above $x$ 
and we have a homomorphism $\pi_1(x)\ra G$, where
$\pi_1(x)$ is the absolute Galois group of $L$.
Let $H_0$ denote the image of this homomorphism.
Then $x$ is inert in $\pi$ if and only if 
the homomorphism is surjective, that is, $H_0=G$.
On the other hand, for a subgroup $H\subset G$, 
the point $x$ lies in $\pi_H\bigl(W/H(L)\bigr)$ 
if and only if $\Spec L\times_{x,U}W/H\ra\Spec L$ has a section,
which is equivalent to the condition that
some conjugate of $H$ contains $H_0$.
The lemma follows from these two observations.
\end{proof}

For our applications, we need a stronger variant of the theorem.

\begin{cor}  \label{cor:Dr-prop2.17}
Let $K$ be a totally real field and $X$ an irreducible smooth separated  $\calO_K$-scheme
with enough totally real curves.
Let $U$ be a nonempty open subscheme of $X$.
Suppose that we are given the following data:
\begin{enumerate}
 \item a flat $\calO_K$-scheme $Y$ which is generically \'etale over $X$;
 \item a closed normal subgroup $H\subset \pi_1(U)$ such that $\pi_1(U)/H$ contains an open pro-$\ell$ subgroup;
 \item a finite subset $S\subset\lvert X\rvert$ such that $S\ra\Spec \calO_K$ is injective;
 \item a one-dimensional subspace $l_s$ of $T_sX$ for every $s\in S$.
\end{enumerate}

Then there exist a totally real curve $C$ with fraction field $L$, 
a morphism $\varphi\colon C\ra X$ with $\varphi^{-1}(U)\neq\emptyset$ and a section $\sigma\colon S\ra C$ of $\varphi$ over $S$ such that
\begin{itemize}
 \item $\varphi(\Spec L)$ is inert in $Y\ra X$,
 \item $\pi_1\bigl(\varphi^{-1}(U)\bigr)\ra \pi_1(U)/H$ is surjective, and 
 \item $\Image (T_{\sigma(s)}C\ra T_sX)=l_s$ for every $s\in S$.
\end{itemize}
\end{cor}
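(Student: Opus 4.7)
The plan is to deduce this corollary from Theorem~\ref{thm:Dr-thm2.15} by absorbing the required fundamental-group surjectivity into an enlargement of the generically étale cover $Y$. The hypothesis that $G := \pi_1(U)/H$ contains an open pro-$\ell$ subgroup is precisely what makes the profinite surjectivity amenable to a \emph{finite} inerting condition, via Frattini's theorem for pro-$\ell$ groups.

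After replacing the given open pro-$\ell$ subgroup by its intersection with its (finitely many) conjugates, we have an open normal pro-$\ell$ subgroup $N \lhd G$. Set $\Phi(N) := \overline{N^\ell [N,N]}$, which is closed and characteristic in $N$, hence normal in $G$. When $N$ is topologically finitely generated, $N/\Phi(N)$ is a finite $\F_\ell$-vector space. Let $U_1 \to U$ and $U_2 \to U$ be the finite étale Galois covers corresponding to the preimages of $N$ and $\Phi(N)$ in $\pi_1(U)$, and extend them to flat $\calO_K$-schemes $Y_1, Y_2$ generically étale over $X$, by taking the normalization of $X$ in $U_i$ and shrinking as necessary. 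Now apply Theorem~\ref{thm:Dr-thm2.15} to the datum $(Y \sqcup Y_1 \sqcup Y_2, S, \{l_s\})$. The output is a totally real curve $\varphi \colon C \to X$ with fraction field $L$ and a section $\sigma$ over $S$ respecting the tangent directions, such that $\varphi(\Spec L)$ is inert in $Y \sqcup Y_1 \sqcup Y_2 \to X$. To also ensure $\varphi^{-1}(U) \neq \emptyset$, one inserts an extra open constraint into the Moret-Bailly data in the proof of Theorem~\ref{thm:Dr-thm2.15}: shrink one of the $V_{ij}$ so that it is contained in $U(K_{v_{ij}})$, which is possible since $U$ is dense open in $X$.

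The inerting conditions on $Y_1$ and $Y_2$ then imply that the image of $\pi_1(\Spec L)$, and hence the closed image $G_0$ of $\pi_1(\varphi^{-1}(U)) \to G$, surjects onto both $G/N$ and $G/\Phi(N)$. So $G_0 \cdot N = G$ and $(G_0 \cap N) \cdot \Phi(N) = N$; Frattini's theorem for the pro-$\ell$ group $N$ forces $G_0 \cap N = N$, whence $G_0 = G$, the desired surjectivity. The first and third required conclusions (inerting in $Y$ and the tangent condition at $S$) come directly from the conclusion of Theorem~\ref{thm:Dr-thm2.15}. The main obstacle is the case in which $N$ fails to be topologically finitely generated, so that $N/\Phi(N)$ is infinite and $U_2 \to U$ is not finite; this does not occur in the intended applications, where $G$ is a closed subgroup of some $\GL_r(E_\lambda)$ and its open pro-$\ell$ subgroup is finitely generated, but the fully general statement requires a limit argument across finite quotients of $N$, iterating Theorem~\ref{thm:Dr-thm2.15}.
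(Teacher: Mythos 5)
Your proof takes essentially the same route as the paper's: enlarge $Y$ by an auxiliary covering of $U$ and invoke Theorem~\ref{thm:Dr-thm2.15}, so that the inerting condition forces the desired $\pi_1$-surjectivity. The paper does this more compactly by citing Drinfeld's Proposition~2.17 for the existence of an open normal subgroup $G_0\subset\pi_1(U)/H$ with the property that any closed subgroup surjecting onto $\bigl(\pi_1(U)/H\bigr)/G_0$ equals the whole group, and then applies Theorem~\ref{thm:Dr-thm2.15} to $Y\sqcup Y'$ where $Y'$ is the covering corresponding to $G_0$. Your argument unpacks Drinfeld's input: one may take $G_0=\Phi(N)$ for an open normal pro-$\ell$ subgroup $N$, and your cover $Y_1$ (for $N$) is then redundant, since surjectivity onto $G/\Phi(N)$ already implies surjectivity onto $G/N$. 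Your insertion of the extra constraint $V_{ij}\subset U(K_{v_{ij}})$ to force $\varphi^{-1}(U)\neq\emptyset$ is a legitimate point that the paper leaves implicit; equivalently, one should choose the initial open shrinking in the proof of Theorem~\ref{thm:Dr-thm2.15} to lie inside $U$. Your caveat about $N$ failing to be topologically finitely generated is honest, but note that in all applications in the paper $\pi_1(U)/H$ is the image of a continuous representation into $\GL_r(E_\lambda)$, so it is a compact $\ell$-adic Lie group and the relevant $N$ is automatically topologically finitely generated; the paper sidesteps the point by citing Drinfeld. Your suggested ``limit argument across finite quotients,'' however, would not produce a single curve, so it is not a repair in the fully general case.
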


\begin{proof}
As is shown in the proof of Proposition~2.17 of \cite{MR3024821}, we can find an open normal subgroup $G_0\subset \pi_1(U)/H$ satisfying the following property:
Every closed subgroup $G\subset \pi_1(U)/H$ such that the map $G\ra \bigl(\pi_1(U)/H\bigr)/G_0$ is surjective equals $\pi_1(U)/H$. 

Let $Y'$ be the Galois covering of $U$ corresponding to $G_0$.
Then we can apply Theorem~\ref{thm:Dr-thm2.15} to $\bigl(Y\sqcup Y', S, (l_s)_{s\in S}\bigr)$ and get the desired triple $(C, \varphi,\sigma)$.
\end{proof}

We have a similar approximation theorem in the CM case.
The proof uses the Weil restriction and is essentially similar to the totally real case, although one has to check that the conditions are preserved under the Weil restriction.

\begin{thm} \label{thm:Dr-thm2.15 for CM}
Let $F$ be a CM field and $Z$ an irreducible smooth separated $\calO_F$-scheme with geometrically irreducible generic fiber. Let $U$ be a nonempty open subscheme of $Z$.
Suppose that we are given the following data:
\begin{enumerate}
 \item a flat $\calO_F$-scheme $W$ which is generically \'etale over $Z$;
 \item a closed normal subgroup $H\subset \pi_1(U)$ such that 
$\pi_1(U)/H$ contains an open pro-$\ell$ subgroup;
 \item a finite subset $S\subset \lvert Z\rvert$ such that 
$S\ra\Spec \calO_{F^+}$ is injective;
 \item a one-dimensional subspace $l_s$ of $T_sZ$ for every $s\in S$.
\end{enumerate}
Then there exist a CM curve $C$ with fraction field $L$, 
a morphism $\varphi\colon C\ra Z$ with $\varphi^{-1}(U)\neq\emptyset$ and a section $\sigma\colon S\ra C$ of $\varphi$ over $S$ such that
\begin{itemize}
 \item $\varphi(\Spec L)$ is inert in $W\ra Z$,
 \item $\pi_1\bigl(\varphi^{-1}(U)\bigr)\ra \pi_1(U)/H$ is surjective, and
 \item $\Image (T_{\sigma(s)}C\ra T_sZ)=l_s$ for every $s\in S$.
\end{itemize}

\end{thm}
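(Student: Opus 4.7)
The strategy is to reduce to the totally real case (Corollary~\ref{cor:Dr-prop2.17}) by Weil restriction along the CM extension $F/F^+$, in the spirit of the passage from (i) to (ii) in the proof of Lemma~\ref{lem:totally real points}. After possibly shrinking $\calO_{F^+}$ to avoid ramification of $F/F^+$, set $Z^+:=\Res_{\calO_F/\calO_{F^+}}Z$ and $U^+:=\Res_{\calO_F/\calO_{F^+}}U$. As in the proof of Lemma~\ref{lem:totally real points}(ii), $Z^+$ is an irreducible smooth separated $\calO_{F^+}$-scheme with enough totally real curves, since $Z^+_{F^+}\otimes_{F^+}F\cong Z_F\times_F c^*Z_F$ is geometrically irreducible and $Z^+_{F^+}(\R)=Z_F(\C)\neq\emptyset$ for every real place of $F^+$. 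The Galois group $\Gal(F/F^+)=\{1,c\}$ acts on $U^+\otimes_{\calO_{F^+}}\calO_F\cong U\times_{\calO_F}c^*U$ by swapping factors.

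Next, translate the input data to analogous data $(W^+,H^+,S^+,(l^+_{s^+}))$ on $Z^+$. Take $W^+:=W\times_Z(Z^+\otimes_{\calO_{F^+}}\calO_F)$ via the first projection; this is flat over $\calO_{F^+}$ and generically \'etale over $Z^+$. For the subgroup, combine the exact sequence $1\to\pi_1(U^+\otimes_{F^+}F)\to\pi_1(U^+)\to\Gal(F/F^+)\to1$ with the map $\pi_1(U^+\otimes_{F^+}F)\to\pi_1(U)\to\pi_1(U)/H$ induced by the first projection, and let $H^+\subset\pi_1(U^+)$ be the preimage of the kernel of this composition intersected with its $c$-conjugate; then $H^+$ is $c$-stable hence normal in $\pi_1(U^+)$, and $\pi_1(U^+)/H^+$ embeds into an extension of $(\pi_1(U)/H)\times(\pi_1(U)/H)$ by $\Gal(F/F^+)$, so it still contains an open pro-$\ell$ subgroup. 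The injectivity of $S\to\Spec\calO_{F^+}$ lets each $s\in S$ lift to a closed point $s^+\in Z^+$ via the canonical morphism $Z\to Z^+$ induced by $\calO_{F^+}\hookrightarrow\calO_F$; transport $l_s$ to $l^+_{s^+}\subset T_{s^+}Z^+$ through the resulting identification of tangent spaces.

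Apply Corollary~\ref{cor:Dr-prop2.17} to $(Z^+,U^+,W^+,H^+,S^+,(l^+_{s^+}))$ to obtain a totally real curve $C^+$ with fraction field $L^+$, a morphism $\varphi^+\colon C^+\to Z^+$ with $\varphi^{+,-1}(U^+)\neq\emptyset$, and a section $\sigma^+$ enjoying the stated properties. By the Weil restriction adjunction, $\varphi^+$ corresponds to a morphism $C^+\otimes_{\calO_{F^+}}\calO_F\to Z$; since $L^+$ is totally real and $F/F^+$ is CM, $L:=L^+\otimes_{F^+}F$ is a CM field, and after normalizing and passing to an appropriate open subscheme this yields the required CM curve $C$ with fraction field $L$ and morphism $\varphi\colon C\to Z$. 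The three desired conclusions for $(\varphi,\sigma)$ then follow from their analogues for $(\varphi^+,\sigma^+)$ via the definitions of $W^+$, $H^+$, and $l^+_{s^+}$. The main obstacle is engineering $H^+$ correctly: it must be normal in $\pi_1(U^+)$ with quotient still containing an open pro-$\ell$ subgroup, while ensuring that surjectivity onto $\pi_1(U^+)/H^+$ forces the target surjectivity $\pi_1(\varphi^{-1}(U))\to\pi_1(U)/H$ after base change, which requires careful bookkeeping of basepoints and of the two projections $U\times_{\calO_F}c^*U\to U$.
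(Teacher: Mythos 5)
Your overall strategy --- Weil restriction along $F/F^+$ to reduce to the totally real approximation theorem --- is exactly the idea the paper uses, and the observation that a point of $Z^+$ inert in $W^+ := W\times_{\calO_F}c^*Z$ gives a point of $Z$ inert in $W$ is sound (in fact, this is even a little cleaner than the paper's $Y=W\times_{\calO_F}c^*W$, which the paper handles via an extra diagram chase involving schemes $P$ and $Q$). However, the paper does \emph{not} reduce to Corollary~\ref{cor:Dr-prop2.17} as a black box on $Z^+$: it re-runs the Moret-Bailly argument for $Z^+$ directly, precisely because the data $(S,\{l_s\})$ do not transport along a morphism $Z\to Z^+$. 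And this is where your proposal has a real gap.

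Concretely, there is no ``canonical morphism $Z\to Z^+$ induced by $\calO_{F^+}\hookrightarrow\calO_F$.'' The adjunction for $\Res_{\calO_F/\calO_{F^+}}$ supplies a counit $Z^+\otimes_{\calO_{F^+}}\calO_F\to Z$, going the other way; a map $Z\to Z^+$ over $\calO_{F^+}$ would be an $\calO_F$-morphism $Z\otimes_{\calO_{F^+}}\calO_F\to Z$, and no canonical such exists when $Z$ is only given as an $\calO_F$-scheme. Even granting some choice of point $s^+$ over $v(s)$, the tangent space $T_{s^+}Z^+$ has roughly twice the dimension of $T_sZ$ (as $k(s^+)$-vector spaces), so ``the resulting identification of tangent spaces'' does not make sense, and it is not clear which $1$-dimensional $l^+_{s^+}$ to choose so that the CM curve one eventually extracts meets $s$ with tangent $l_s$. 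The paper avoids any such identification: for each $s$ it constructs the local field $M_s$ over $F_{w(s)}$ (also Galois over $F^+_{v(s)}$), a point $\hat{s}\in Z(\calO_{M_s})$ with the prescribed tangent, and then encodes the constraint directly as the open set $V_s := V''_s\times c^*V''_s$ inside $Z^+(M_s) = Z(M_s)\times c^*Z(M_s)$. This packaging never asks for a morphism $Z\to Z^+$ and is why the Moret-Bailly machine has to be fed by hand rather than through Corollary~\ref{cor:Dr-prop2.17}. A secondary remark: the paper dispenses with your somewhat delicate construction of $H^+$ by absorbing $H$ into the covering datum $W$ \emph{before} Weil-restricting, as in the proof of Corollary~\ref{cor:Dr-prop2.17}; your route can likely be made to work but is harder and, as you note, leaves the back-translation of the surjectivity onto $\pi_1(U)/H$ unverified.
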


\begin{proof}
Let $F^+$ be the maximally totally real subfield of $F$.
Let $w$ (resp.~ $v$) denote the structure morphism $Z\ra \Spec\calO_F$
(resp.~ $Z\ra \Spec \calO_{F^+}$).
As in the proof of Corollary~\ref{cor:Dr-prop2.17}, we may omit 
the datum (ii) by replacing $W$ by another flat, generically \'etale $Z$-scheme and prove the first and third properties of the triple $(C,\varphi,\sigma)$.

Define
$Z^+$ to be the Weil restriction $\Res_{\calO_F/\calO_{F^+}}Z$.
Then we have
 $Z^+\otimes_{\calO_{F^+}}\calO_F\cong Z\times_{\calO_F}c^\ast Z$,
where $c$ denotes the nontrivial element of $\Gal(F/F^+)$ and
$c^\ast Z$ denotes $Z\otimes_{\calO_{F},c}\calO_F$.
It follows from the assumptions that $Z^+$ is 
an irreducible smooth $\calO_{F^+}$-scheme
with enough totally real curves.
We will apply the theorem of Moret-Bailly to $Z^+$ with appropriate data.

We may assume that each connected component of $W$ is a Galois cover over its image in $Z$ by replacing $W$ if necessary.
Put $Y=W\times_{\calO_F}c^\ast W$ and regard it as an $\calO_{F^+}$-scheme. 
Then $Y\ra Z\times_{\calO_F}c^\ast Z\ra Z^+$ 
is flat and generically \'etale, and therefore 
satisfies the same assumptions as $Y\ra X$ in Theorem~\ref{thm:Dr-thm2.15}
and the second paragraph of its proof.
Hence, as in Claim~\ref{claim:choice of v_ij} in the proof of Theorem~\ref{thm:Dr-thm2.15}, 
there exist a finite set $\{v_{ij}\}_{1\leq i\leq k, 1\leq j\leq r_i}$ of finite places of $F^+$
and a nonempty open subset $V_{ij}$ of $Z^+(F^+_{v_{ij}})$ for each $(i,j)$
satisfying the following properties:

\begin{enumerate}
 \item For any finite extension $L^+$ of $F^+$ and any $L^+$-rational point
$z^+\in Z^+(L^+)$, $z^+$ is inert in $Y\ra Z^+$ if 
$L^+$ is $F^+_{v_{ij}}$-split and if
$z^+$ lands in $V_{ij}$ under any embedding $L^+\hra F^+_{v_{ij}}$.
 \item $v_{ij}$ are different from any element of $v(S)$.
\end{enumerate}

Next take any $s\in S$. 
We will choose a finite Galois extension $M_s$ of $F_{w(s)}$
and a $\Gal(M_s/F^+_{v(s)})$-stable nonempty subset
of $Z^+(M_s)$ with respect to $M_s$-topology.
Here we regard $w(s)$ (resp.~ $v(s)$) as a finite place of $F$ (resp.~ $F^+$).
Then $M_s$ is Galois over $F^+_{v(s)}$ since
$w(s)$ lies above $v(s)$ and $[F_{w(s)}: F^+_{v(s)}]$ is either 1 or 2.

As in the proof of Theorem~\ref{thm:Dr-thm2.15}, 
we can find a finite Galois extension $M_s$ of $F_{w(s)}$ with residue field $k(s)$ and a homomorphism $\calO_{Z,s}\ra \calO_{M_s}$
such that the image of the differential of the corresponding morphism
$\Spec \calO_{M_s}\ra Z$ at the closed point is $l_s$.
Denote by $\hat{s}\in Z(\calO_{M_s})\subset Z(M_s)$ the point corresponding to
this morphism.

Let $\alpha\colon Z(\calO_{M_s})\ra Z(\calO_{M_s}/\fkm_{M_s}^2)$ be the reduction map, where $\fkm_{M_s}$ denotes the maximal ideal of $\calO_{M_s}$.
Define $V'_s=\alpha^{-1}\bigl(\alpha(\hat{s})\bigr)$, which is a nonempty open subset of $Z(M_s)$, 
and put $V''_s=\bigcup_{\sigma}\sigma(V'_s)$
where $\sigma$ runs over all the elements of $\Gal(M_s/F_{w(s)})$.

Denote by $\iota$ a natural embedding $F\hra F_{w(s)}\hra M_s$.
We have $\Hom_{F^+}(F,M_s)=\{\iota,\iota\circ c\}$ and
the $F^+$-homomorphism $(\iota, \iota\circ c)\colon F\ra M_s\times M_s$
induces an isomorphism $M_s\otimes_{F^+}F\cong M_s\times M_s$
which sends $a\otimes b$ to $\bigl(a\iota(b), a\iota(c(b))\bigr)$.
Hence we get identifications
$Z^+(M_s)=Z(M_s\otimes_{F^+}F)=Z(M_s)\times c^\ast Z(M_s)$. 
Here $Z(M_s)$ denotes $\Hom_{\calO_F}(\Spec M_s, Z)$ by regarding $\Spec M_s$ as an $F$-scheme via $\iota$.

Define 
\[
 V_s:=V_s''\times c^\ast V_s''\subset Z(M_s)\times c^\ast Z(M_s)=Z^+(M_s). 
\]
This is a nonempty open subset of $Z^+(M_s)$.
Since $V_s''$ is $\Gal(M_s/F_{w(s)})$-stable and $\Gal(F/F^+)=\{\id,c\}$, 
$V_s$ is $\Gal(M_s/F^+_{v(s)})$-stable.

Let $v_{\infty,1},\ldots, v_{\infty,n}$ be the real places of $F^+$.
Then for each $1\leq i\leq n$ put 
\[
V_{\infty,i}=Z^+(\R)=Z(\C)
\]
via an isomorphism $F\otimes_{F^+}\R\cong \C$. 
This is a nonempty open set.

We apply Theorem~\ref{thm:Moret-Bailly} to the quadruple
\[
\bigl(Z^+_{F^+}, 
\{v_{ij}\}_{i,j}\cup\{v(s)\}_{s\in S}\cup \{v_{\infty,i}\}_i,
\{F^+_{v_{ij}}\}\cup \{M_s\}\cup \{\R\},
\{V_{ij}\}\cup \{V_s\}\cup \{V_{\infty,i}\}\bigr)
\]
and find a totally real finite extension $L^+$ of $F^+$ and an $L^+$-rational point $z^+\in Z^+(L^+)$ satisfying the following properties:
\begin{enumerate}
 \item $z^+$ is inert in $Y\ra Z^+$.
 \item  $L^+$ is $M_s$-split and $z^+$ goes into $V_s$ via any embedding $L^+\hra M_s$.
\end{enumerate}

Let $L$ be the CM field $L^+\otimes_{F^+}F$ and $z\in Z(L)$ be
the $L$-rational point corresponding to $z^+\in Z^+(L^+)$.
Then the morphism $z$ is equal to the composite
\[
 \pr_Z\circ (z^+\otimes_{F^+}F)\colon \Spec L\ra Z^+\otimes_{\calO_{F^+}}\calO_F
=Z\times_{\calO_F}c^\ast Z\ra Z.
\]
We can spread out $z\colon \Spec L\ra Z$ to a morphism
$\varphi\colon C\ra Z$ 
for some CM curve $C$ with fraction field $L$.
We may assume that $C$ contains all the points of $\Spec \calO_L$ above $w(S)\subset \Spec \calO_F$.
It follows from property (ii) and the definition of $V_s$ that $\varphi$ has a section $\sigma$ over $S$ such that $\Image (T_{\sigma(s)}C\ra T_sX)=l_s$ for every $s\in S$.

It remains to prove that $z=\varphi(\Spec L)$ is inert in $W\ra Z$.
Without loss of generality, we may assume that $W_F$ is connected, and thus
it suffices to show that 
$\Spec L\times_{z,Z}W=\Spec L\times_{z,Z_F}W_F$ is connected.
Define the schemes $P$ and $Q$ such that 
the squares in the following diagram are Cartesian:
\[
\xymatrix{
P \ar[r] \ar[d] & Q \ar[r] \ar[d]& \Spec L \ar[r]\ar[d]^{z^+\otimes_{F^+}F}
& \Spec L^+ \ar[d]^{z^+}\\
W_F\times_Fc^\ast W_F \ar[r] & W_F\times_Fc^\ast Z_F \ar[r]\ar[d]
& Z_F\times_Fc^\ast Z_F \ar[r] \ar[d]^{\pr_{Z_F}} & Z^+_{F^+} \\
&W_F\ar[r]&Z_F.
} 
\]

Since $W_F\times_Fc^\ast W_F=Y_{F^+}$, we have
$P\cong \Spec L^+\times_{z^+,Z^+_{F^+}}Y_{F^+}$.
As $Q\cong \Spec L\times_{z,Z_F}W_F$,
we need to show that $Q$ is connected.

Note that $W_F\times_Fc^\ast Z_F$ is connected;
this follows from the fact 
that $c^\ast Z_F$ is geometrically connected over $F$
and $W_F$ is connected.
Now take any connected component $T$ of $Y_{F^+}=W_F\times_F c^\ast W_F$.
Since $W_F\times_F c^\ast W_F\ra W_F\times_Fc^\ast Z_F$ is an 
 \'etale covering with connected base, $T$ surjects onto 
$W_F\times_Fc^\ast Z_F$.
At the same time, the subscheme $\Spec L^+\times_{z^+,Z^+_{F^+}}T\subset P$ 
is connected because $z^+$ is inert in $Y\ra Z^+$.
Since the connected scheme $\Spec L^+\times_{z^+,Z^+_{F^+}}T$ surjects 
onto $Q$, the latter is also connected.
\end{proof}

\begin{rem}\label{rem:sm and sep assump for Dr-thm2.15}
In Theorem~\ref{thm:Dr-thm2.15}, Corollary~\ref{cor:Dr-prop2.17}, and Theorem~\ref{thm:Dr-thm2.15 for CM}, we assume that the scheme in question is smooth and separated. If $S=\emptyset$, then we can replace these two assumptions by regularity.
In fact, if $S=\emptyset$, we can replace the scheme by an open subscheme, and thus reduce to the separated case. Moreover, the regularity implies that the generic fiber of the scheme is smooth. So we can apply the theorem of Moret-Bailly to our scheme. Note that the smoothness assumption was used only when $S\neq \emptyset$ and $l_s$ is not horizontal for some $s\in S$.
\end{rem}

\section{Proofs of Theorem~\ref{thm:Dr-thm2.5} and its variants}
\label{section:prf of Dr-thm2.5}

In this section, we prove Theorem~\ref{thm:Dr-thm2.5} and its variants following \cite{MR3024821}.
First we set up our notation.
Fix a prime $\ell$ and a finite extension $E_\lambda$ of $\Q_\ell$.
Let $\calO$ be the ring of integers of $E_\lambda$ and $\fkm$ its maximal ideal.

Fix a positive integer $r$. For a normal scheme $X$ of finite type over $\Spec \Z[\ell^{-1}]$, $\LS^{E_\lambda}_r(X)$ denotes the set of equivalence classes of lisse $E_\lambda$-sheaves on $X$ of rank $r$, and $\widetilde{\LS}^{E_\lambda}_r(X)$ denotes the set of maps from the set of closed points of $X$ to the set of polynomials of the form $1+c_1t+\cdots+c_rt^r$ with $c_i\in\calO$ and $c_r\in \calO^\times$. 
Here we say that two lisse $E_\lambda$-sheaves on $X$ are \emph{equivalent} if they have isomorphic semisimplifications. 
Since the coefficient field $E_\lambda$ is fixed throughout this section, 
we simply write $\LS_r(X)$ or $\widetilde{\LS}_r(X)$.

For an element $f\in \widetilde{\LS}_r(X)$, we denote by $f_x(t)$ or $f(x)(t)$ the value of $f$ at $x\in X$; this is a polynomial in $t$.
By the Chebotarev density theorem, we can regard $\LS_r(X)$ as a subset of $\widetilde{\LS}_r(X)$ by attaching to each equivalence class its Frobenius characteristic polynomials. 
For another scheme $Y$ and a morphism $\alpha\colon Y\ra X$, we have a canonical map $\alpha^\ast\colon \widetilde{\LS}_r(X)\ra\widetilde{\LS}_r(Y)$ whose restriction to $\LS_r(X)$ coincides with the pullback map of sheaves 
$\LS_r(X)\ra \LS_r(Y)$. 
We also denote $\alpha^\ast (f)$ by $f|_Y$.

Let $C$ be a separated smooth curve over a finite field and $\overline{C}$ the smooth compactification of $C$. 
We define $\LS^{\tame}_r(C)$ to be the subset of $\LS_r(C)$ consisting of equivalence classes of lisse $E_\lambda$-sheaves on $C$ which are tamely ramified at each point of $\overline{C}\setminus C$. This condition does not depend on the choice of a lisse sheaf in the equivalence class.
Let $\varphi$ be a morphism $C\ra X$ and $f\in \widetilde{\LS}_r(X)$.
When $\varphi^\ast(f) \in \LS_r(C)$ (resp.~ $\varphi^\ast(f) \in \LS^{\tame}_r(C)$), we simply say that $f$ \emph{arises from} a lisse sheaf (resp.~ a tame lisse sheaf) over the curve $C$.

To show Theorem 2.5 of \cite{MR3024821}, which is a prototype of Theorem~\ref{thm:Dr-thm2.5}, 
 Drinfeld considers a subset $\LS'_r(X)$  of $\widetilde{\LS}_r(X)$ which contains $\LS_r(X)$ and is characterized by a group-theoretic property.
He then proves the following three statements for $f\in \widetilde{\LS}_r(X)$,
which imply Theorem 2.5 of \cite{MR3024821}.

\begin{itemize}
 \item If the map $f$ satisfies two conditions\footnote{One needs tameness assumption in the second condition (it is identical to condition (ii) of Propositions ~\ref{prop:Dr-prop2.12}).}
similar to those in Theorem~\ref{thm:Dr-thm2.5}, then $f|_U\in \LS'_r(U)$ for some dense open subscheme $U\subset X$. 
 \item If $U$ is regular, then $\LS'_r(U)=\LS_r(U)$. In particular, the restriction $f|_U\in \LS'_r(U)$ arises from a lisse sheaf.
 \item If $f|_U$ arises from a lisse sheaf, then so does $f$ 
under the assumptions that $X$ is regular and that $f|_C$ arises from a lisse sheaf for every regular curve $C$.
\end{itemize}

Following Drinfeld, we will introduce the group-theoretic notion of ``having a kernel'' and prove similar statements, Propositions ~\ref{prop:Dr-prop2.12}, \ref{prop:Dr-prop2.13}, and \ref{prop:Dr-prop2.14}.
Theorem~\ref{thm:Dr-thm2.5} and its variants will be deduced from them at the end of the section.

\begin{defn}
Let $X$ be a scheme of finite type over $\Z[\ell^{-1}]$ and $f\in \widetilde{\LS}_r(X)$.
For a nonzero ideal $I\subset\calO$, the map $f$ is said to be \emph{trivial modulo $I$} if it has the value congruent to $(1-t)^r$ modulo $I$ at every closed point of $X$. 

When $X$ is connected, the map $f$ is said to \emph{have a kernel} if there exists a closed normal subgroup $H\subset \pi_1(X)$ satisfying the following conditions:
\begin{enumerate}
 \item $\pi_1(X)/H$ contains an open pro-$\ell$ subgroup.
 \item For every $n\in\N$, there exists an open subgroup $H_n\subset \pi_1(X)$ containing $H$ such that the pullback of $f$ to $X_n$ is trivial modulo $\fkm^n$. Here $X_n$ denotes the covering of $X$ corresponding to $H_n$.
\end{enumerate}
When $X$ is disconnected, the map $f$ is said to have a kernel if the restriction of $f$ to each connected component of $X$ has a kernel.
\end{defn}

\begin{rem} \label{rem:sheaf has a kernel}
If $f$ arises from a lisse sheaf on $X$, it has a kernel. 
To see this, we may assume that $X$ is connected.
Then the kernel of the $E_\lambda$-representation of $\pi_1(X)$ corresponding to the lisse sheaf satisfies the conditions.
\end{rem}

\begin{rem}
 The set $\LS'_r(X)$ defined by Drinfeld
consists of the maps $f$ which have a kernel and arise from a lisse sheaf over every regular curve (Definition~2.11 of \cite{MR3024821}).
\end{rem}

\begin{prop} \label{prop:Dr-prop2.12}
Let $K$ be a totally real field.
Let $X$ be an irreducible regular $\calO_K[\ell^{-1}]$-scheme with enough totally real curves and $f\in \widetilde{\LS}_r(X)$.
Assume that
 \begin{enumerate}
  \item $f$ arises from a lisse sheaf over every totally real curve, and
  \item there exists a dominant \'etale morphism $X'\ra X$ such that
the pullback $f|_{X'}$ arises from a tame lisse sheaf over every separated smooth curve over a finite field.
 \end{enumerate}
Then there exists a dense open subscheme $U\subset X$ such that $f|_U$ has a kernel.
\end{prop}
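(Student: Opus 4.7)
The plan is to adapt Drinfeld's proof of the analogous Proposition~2.12 of \cite{MR3024821}, using Corollary~\ref{cor:Dr-prop2.17} in place of his general approximation theorem so that condition (i) need only involve totally real curves rather than all regular curves. After shrinking $X$ to a dense open subscheme, I can assume that the dominant \'etale morphism in (ii) restricts to a finite \'etale cover $X'\to X$. Taking the Galois closure and checking that having a kernel descends along a finite Galois \'etale cover — by replacing a candidate kernel in $\pi_1(X')$ by the intersection of its $\Gal(X'/X)$-conjugates and verifying that the resulting quotient of $\pi_1(X)$ is still pro-$\ell$-by-finite — I reduce to proving the proposition under the stronger hypothesis that (ii) holds over $X$ itself: $f|_C$ arises from a tame lisse sheaf for every separated smooth curve $C\to X$ over a finite field.

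The heart of the proof is the construction, by induction on $n$, of a descending chain of open normal subgroups $\pi_1(X)\supset H_1\supset H_2\supset \cdots$ with $f|_{X_n}\equiv (1-t)^r \pmod{\fkm^n}$, where $X_n\to X$ is the cover corresponding to $H_n$. The base case $n=1$ uses condition (ii): tameness forces the hypothetical mod-$\fkm$ residual monodromy on any finite-field curve $C\to X$ to take values in a finite subgroup of $\GL_r(\calO/\fkm)$ of uniformly bounded type, and Chebotarev density together with Drinfeld's machinery then produces $H_1$. For the inductive step, given $H_{n-1}$ I would run a cohomological deformation argument on $\pi_1(X_{n-1})$, as in Drinfeld, to produce a candidate $H_n\subset H_{n-1}$. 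Verification that $f|_{X_n}\equiv (1-t)^r\pmod{\fkm^n}$ reduces by Chebotarev to checking the congruence of the Frobenius polynomial at each closed point $x\in X_n$. For such an $x$ with image $x_0\in X$, I apply Corollary~\ref{cor:Dr-prop2.17} with $S=\{x_0\}$, a one-dimensional $l_{x_0}\subset T_{x_0}X$, $Y=X_n$, and closed normal subgroup $H_n\subset \pi_1(X)$ (whose quotient is finite, so the pro-$\ell$ condition is automatic) to obtain a totally real curve $\varphi\colon C\to X$ through $x_0$ such that $C\times_X X_n$ is connected and a section $\sigma$ with $\sigma(x_0)\in C$ lifting $x_0$. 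Condition (i) supplies a lisse sheaf $\calE_C$ on $C$ whose Frobenius at $\sigma(x_0)$ yields $f(x_0)$; the inerting property then propagates the mod-$\fkm^n$ triviality dictated by the choice of $H_n$ from $C\times_X X_n$ back down to $x$, giving the desired congruence on eigenvalues (note that raising eigenvalues to the residue degree $[k(x):k(x_0)]$ preserves congruence to $1$ modulo $\fkm^n$).

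Setting $H:=\bigcap_n H_n$, the quotient $\pi_1(X)/H$ injects into $\varprojlim_n \pi_1(X)/H_n\subset \GL_r(\calO)$, and since the principal congruence subgroup $1+\fkm M_r(\calO)$ is an open pro-$\ell$ subgroup of $\GL_r(\calO)$, the resulting $H$ verifies the pro-$\ell$ condition in the definition of ``having a kernel.'' The main obstacle is the inductive construction of the candidate $H_n$: one must extract a finite-level deformation datum on $X$ from purely curve-level information, using tameness on finite-field curves (which uniformly bounds ramification of any would-be $\ell$-adic representation) together with actual lisse sheaves on totally real curves (which realize the infinitesimal structure of $\rho$ at each level). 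Corollary~\ref{cor:Dr-prop2.17} is essential precisely here: it guarantees that totally real curves are plentiful enough — passing through any prescribed closed point with any prescribed tangent direction and compatibly with any prescribed \'etale cover and pro-$\ell$-by-finite quotient — to play the role Drinfeld's original argument assigns to arbitrary regular curves, and to render Chebotarev-type density arguments effective even within the restricted class of totally real curves.
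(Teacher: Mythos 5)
The key step — producing the open normal subgroups $H_n\subset\pi_1(X)$ — is not actually carried out in your proposal, and you acknowledge this yourself (``the main obstacle is the inductive construction of the candidate $H_n$''). You invoke a ``cohomological deformation argument on $\pi_1(X_{n-1})$, as in Drinfeld,'' but Drinfeld's proof does not construct the covers by deformation theory; that phrase does not correspond to any argument in his paper or in this one, and there is no reason to expect that purely curve-level data can be assembled into the required tower of covers of $X$ without some global geometric input. The verification step you describe is therefore vacuous: you propose to check $f|_{X_n}\equiv(1-t)^r\bmod\fkm^n$ at closed points via totally real curves through those points, but the lisse sheaf on such a curve $C$ supplied by condition (i) has no a priori relation to your hypothetical $H_n$ — there is nothing forcing its mod-$\fkm^n$ reduction to factor through $\pi_1(C)\to\pi_1(X)/H_n$, so the ``mod-$\fkm^n$ triviality dictated by the choice of $H_n$'' is not something $C$ knows about.

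What the paper does instead is structurally different and fills exactly this gap: it builds a chain of split elementary fibrations $X_n\to\cdots\to X_1$ over $X$ (Lemma~\ref{lem:totally real elementary fibration}), with $X_n$ \'etale over $X$ and $X_1$ a totally real curve. The base case is then handled by your condition (i) directly (a lisse sheaf on a curve has a kernel, Remark~\ref{rem:sheaf has a kernel}), and the inductive step up each elementary fibration is Lemma~\ref{lem:induction in Dr-lem3.1}, where the covers are produced not by deformation theory but by the representability of the moduli of tame $\GL_r(\calO/\fkm^n)$-torsors relative to the base of the fibration. This relative representability, together with the tameness hypothesis on finite-field curves (needed so that $T_n\to S$ is finite \'etale and so that $\pi_1^{\tame}$ of the geometric fiber is topologically finitely generated), is the mechanism that assembles curve-level information into a tower of covers. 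The approximation result (Corollary~\ref{cor:Dr-prop2.17}) plays no role in Proposition~\ref{prop:Dr-prop2.12} at all; its only use via Lemma~\ref{lem:var of totally real elementary fibration} is in producing a totally real curve as the base of the chain. Your proposal leans on Corollary~\ref{cor:Dr-prop2.17} for the congruence-checking step, but that is where it is not needed; where an actual construction is needed, the proposal substitutes a nonexistent technique. A smaller point: the paper does not reduce to the case where (ii) holds over $X$ itself; it keeps $X'$ in play and pulls it back along the chain ($X_i'=X_i\times_X X'$), which is simpler than your Galois-closure descent argument, though that step of yours is at least plausible.
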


We will first show two lemmas and then prove Proposition~\ref{prop:Dr-prop2.12} by induction on the dimension of $X$.
For this we use elementary fibrations, which we recall now.

\begin{defn}
A morphism of schemes $\pi\colon X\ra S$ is called an \emph{elementary fibration} 
if there exist an $S$-scheme $\pi\colon \overline{X}\ra S$ and a factorization
$X\ra \overline{X}\stackrel{\overline{\pi}}{\ra} S$ of $\pi$
such that
\begin{enumerate}
 \item the morphism $X\ra \overline{X}$ is an open immersion and
$X$ is fiberwise dense in $\overline{\pi}\colon\overline{X}\ra S$,
 \item $\overline{\pi}$ is a smooth and projective morphism whose geometric fibers are nonempty irreducible curves, and
 \item the reduced closed subscheme $\overline{X}\setminus X$ is finite and \'etale over $S$.
\end{enumerate}
\end{defn}

The next lemma, which is due to Drinfeld and Wiesend, is a key to our induction argument in the proof of Proposition~\ref{prop:Dr-prop2.12}.
\begin{lem} \label{lem:induction in Dr-lem3.1}
Let $X$ be a scheme of finite type over $\Z[\ell^{-1}]$ and $f\in \widetilde{\LS}_r(X)$. 
Suppose that $X$ admits an elementary fibration $X\ra S$ with a section $\sigma\colon S\ra X$. 
Assume that
\begin{enumerate}
 \item $f$ arises from a tame lisse sheaf over every fiber of $X\ra S$, and
 \item there exists a dense open subscheme $V\subset S$ such that $\sigma^\ast(f)|_V$ has a kernel.
\end{enumerate}
Then there exists a dense open subscheme $U\subset X$ such that $f|_U$ has a kernel.
\end{lem}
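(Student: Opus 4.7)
The plan is to exploit the elementary fibration together with the section $\sigma$ to decompose the problem via a tame relative homotopy exact sequence: hypothesis (ii) controls the base direction, and hypothesis (i) controls the fiber direction.

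First I would replace $S$ by $V$, so that $\sigma^\ast(f)$ has a kernel on all of $S$; write $H_S \subset \pi_1(S)$ for a witnessing closed normal subgroup and, for each $n \in \N$, $H_{S,n} \supset H_S$ for the open subgroup provided by the definition of ``having a kernel''. After possibly shrinking $S$ further, I may assume that $\overline{X} \to S$ is smooth projective with geometrically connected one-dimensional fibers and that $D := \overline{X}\setminus X$ is finite \'etale over $S$. Set $U := X\times_S V$ initially (to be shrunk further at the end of the argument).

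The key tool is the homotopy exact sequence for the relative tame fundamental group with respect to $D$,
\[
 1 \lra \pi_1^{\tame}(X_{\bar\eta},D_{\bar\eta}) \lra \pi_1^{\tame}(U,D) \lra \pi_1(V) \lra 1,
\]
which is split by $\sigma$. Hypothesis (i) says that for each closed $s\in S$, the Frobenius polynomials of $f|_{X_s}$ come from a representation of $\pi_1^{\tame}(X_s,D_s)$. A Chebotarev and specialization argument, applied to a sufficiently dense family of closed fibers, then upgrades this fiberwise statement (after a further shrinking of $U$) to: $f|_U$ agrees with the Frobenius polynomials of some continuous representation that factors through $\pi_1^{\tame}(U,D)$.

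With this factorization in hand, $H\subset \pi_1(U)$ is built as follows. Since $\pi_1^{\tame}(X_{\bar\eta},D_{\bar\eta})$ is topologically finitely generated, the image of the fiber part of the mod-$\fkm^n$ representation of $\pi_1^{\tame}(U,D)$ attached to $f$ is finite; let $H_n^{\mathrm{fib}}$ be its kernel, an open subgroup of the fiber group. Using the semidirect product structure, let $H_n \subset \pi_1(U)$ be the preimage in $\pi_1^{\tame}(U,D)$ of the open subgroup determined by $H_{S,n}$ on the base factor and $H_n^{\mathrm{fib}}$ on the fiber factor, and set $H := \bigcap_n H_n$. Then $\pi_1(U)/H$ is an extension of $\pi_1(V)/H_S$ by a quotient of the tame fiber group; each of these contains an open pro-$\ell$ subgroup, hence so does $\pi_1(U)/H$, and by construction the pullback of $f$ to the cover of $U$ associated to $H_n$ is trivial modulo $\fkm^n$.

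The main obstacle is the spreading-out step: passing from the \emph{fiber-wise} hypothesis that each $f|_{X_s}$ comes from a tame lisse sheaf to the \emph{relative} statement that $f|_U$ factors, up to semisimplification, through $\pi_1^{\tame}(U,D)$. This is where the bulk of the technical work lives; it requires carefully matching Frobenius conjugacy classes of closed points on $U$ against those obtained by specialization to closed fibers, and choosing the shrinking of $U$ compatibly with the tame structure so that the pieced-together representation is well-defined.
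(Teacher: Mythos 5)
Your plan runs into an essential difficulty at exactly the step you flag as ``the main obstacle,'' and this difficulty is not a technical nuisance that can be deferred: it is the whole content of the problem. The claim that a Chebotarev-and-specialization argument upgrades hypothesis (i) to the statement that $f|_U$ is the Frobenius characteristic polynomial of a continuous representation factoring through $\pi_1^{\tame}(U,D)$ is, up to semisimplification, the assertion that $f|_U$ arises from a lisse sheaf on $U$ tamely ramified along $D$. Nothing in the hypotheses forces the fiberwise lisse sheaves of (i) to fit together: there is no compatibility or continuity assumption on $s\mapsto f|_{X_s}$ beyond the mere fact that they have matching characteristic polynomials of Frobenius, and Chebotarev relates closed points to conjugacy classes in a \emph{fixed} covering group, not across a varying family of unrelated representations. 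Indeed ``having a kernel'' was introduced precisely as a weaker, purely group-theoretic surrogate that can be established without ever producing such a representation; your intermediate step is strictly stronger than the conclusion and is not available here. (It is also not clear that the tame relative homotopy exact sequence you invoke holds in this form over the arithmetic base $V$, but that is a secondary concern.)

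The paper's proof sidesteps the gluing issue entirely. For each $n$ it considers the functor sending an $S$-scheme $S'$ to the set of isomorphism classes of $\GL_r(\calO/\fkm^n)$-torsors on $X\times_S S'$ tame along $(\overline{X}\setminus X)\times_S S'$ relative to $S'$ and trivialized along $\sigma$. This functor is representable by an \'etale $S$-scheme $T_n$ of finite type (representability, not a homotopy exact sequence, is the operative input), and after shrinking $S$ one may take $T_n\to S$ finite. Letting $\fkT_n$ be the universal tame torsor over $X\times_S T_n$, one forms the \'etale cover $Y_n:=\Res_{X\times_S T_n/X}\fkT_n$ of $X$. Over a geometric fiber $X_{\bar s}$, $Y_n$ is the fiber product of \emph{all} tame $\GL_r(\calO/\fkm^n)$-torsors on $X_{\bar s}$, so it trivializes the mod-$\fkm^n$ reduction of whichever lisse sheaf realizes $f|_{X_s}$, without ever needing to know which one or whether the choices cohere as $s$ varies. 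Setting $X_n:=Y_n\times_S S_n$, with $S_n$ the covers coming from the kernel of $\sigma^\ast(f)$, one verifies triviality mod $\fkm^n$ fiberwise, and the pro-$\ell$ condition on $\pi_1(X)/\bigcap_n H_n$ follows from topological finite generation of the geometric tame fundamental group of a fiber combined with the corresponding property on the base. This ``universal torsor'' construction is the missing idea; a spreading-out argument at the level of a single representation of $\pi_1^{\tame}(U,D)$ does not seem to be available.
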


\begin{proof}
This is shown in the latter part of the proof of Lemma~3.1 of \cite{MR3024821}. 
For convenience of the reader, we summarize the proof below.

We may assume that $X$ is connected and normal, and that $V=S$. 
For every $n\in \N$, consider the functor which attaches to an $S$-scheme $S'$ the set of isomorphism classes of $\GL_r(\calO/\fkm^n)$-torsors on $X\times_SS'$ tamely ramified along $(\overline{X}\setminus X)\times_SS'$ relative to $S'$ with trivialization over the section $S'\hra X\times_SS'$.
Then this functor is representable by an \'etale scheme $T_n$ of finite type over $S$ and the morphism $T_{n+1}\ra T_n$ is finite for each $n$.
By shrinking $S$, we may assume that the morphism $T_n\ra S$ is finite for each $n$. We will prove that $f$ has a kernel in this situation.

Since $\sigma^\ast(f)$ has a kernel by assumption (ii), there exist connected \'etale coverings $S_n$ of $S$ such that 
\begin{itemize}
 \item the pullback of $\sigma^\ast(f)$ to $S_n$ is trivial modulo $\fkm^n$, and
 \item for some (or any) geometric point $\bar{s}$ of $S$, the quotient of the group $\pi_1(S,\bar{s})$ by the intersection of the kernels of its actions on the fibers $(S_n)_{\bar{s}}$ where $n$ runs in $\N$ contains an open pro-$\ell$ subgroup.
\end{itemize}

Let $\fkT_n$ be the universal tame $\GL_r(\calO/\fkm^n)$-torsor over $X\times_ST_n$. Define the $X$-scheme $Y_n$ to be the Weil restriction $\Res_{X\times_ST_n/X}\fkT_n$ and let $X_n$ denote $Y_n\times_S S_n$. We thus have a diagram
whose squares are Cartesian
\[
\xymatrix{
X_n \ar[r] \ar[d] & X\times_SS_n \ar[r] \ar[d] & S_n \ar[ld] \ar[d]\\
Y_n \ar[r] & X \ar[r] & S\\
}
\]
 and regard $X_n$ as an \'etale covering of $X$. 
Here the morphism $S_n\ra X$ is the composite of $S_n\ra S$ and the section $\sigma\colon S\ra X$.

It suffices to prove the following two assertions:
\begin{enumerate}
 \item[(a)] The pullback of $f$ to $X_n$ is trivial modulo $\fkm^n$.
 \item[(b)] For some (or any) geometric point $\bar{x}$ of $X$, the quotient of the group $\pi_1(X,\bar{x})$ by the intersection of the kernels of its actions on the fibers $(X_n)_{\bar{x}}$ where $n$ runs in $\N$ contains an open pro-$\ell$ subgroup.
\end{enumerate}

In fact, if we take a Galois covering $X_n'$ of $X$ splitting the (possibly disconnected) covering $X_n$, the corresponding subgroup $H_n:=\pi(X_n',\bar{x})\subset \pi_1(X,\bar{x})$ and the intersection $H:=\bigcap_n H_n$ satisfy the conditions for the map $f$ to have a kernel.

First we prove assertion (a). Take an arbitrary closed point $x\in X_n$.
Let $s\in S$ denote the image of $x$ and choose a geometric point $\bar{s}$ above $s\in S$. 
By assumption (i), the restriction $f|_{X_s}$ arises from a lisse $E_\lambda$-sheaf of rank $r$. Let $\calF$ be a locally constant constructible sheaf of free $(\calO/\fkm^n)$-modules of rank $r$ obtaining from the above lisse sheaf modulo $\fkm^n$.

Consider the $X_{\bar{s}}$-scheme $(Y_n)_{\bar{s}}$.
The scheme $(X\times_ST_n)_{\bar{s}}$ is the disjoint union of copies of $X_{\bar{s}}$, and $(\fkT_n)_{\bar{s}}$ is a disjoint union of the $\GL_r(\calO/\fkm^n)$-torsors, each of which lies above a copy of $X_{\bar{s}}$ in $(X\times_ST_n)_{\bar{s}}$. 
Since the Weil restriction and the base change commute, $(Y_n)_{\bar{s}}$ is the fiber product of the tame $\GL_r(\calO/\fkm^n)$-torsors over $X_{\bar{s}}$. Hence $\calF|_{(Y_n)_{\bar{s}}}$ is constant, and so is $\calF|_{(X_n)_{\bar{s}}}$. 

Now let $s'\in S_n$ be the image of $x$. 
By the choice of $S_n$, we have  
\[
 (\sigma^\ast(f))|_{S_n}(s')(t)\equiv (1-t)^r \mod \fkm^n.
\]
Since we have shown that $\calF|_{(X_n)_{\bar{s}}}$ is constant, 
it follows that $f|_{X_n}(x)(t)\equiv (1-t)^r \mod \fkm^n$.

Finally, we prove assertion (b).
Let $\eta$ be the generic point of $S$. Choose a geometric point $\bar{\eta}$ above $\eta\in S$ and let $\bar{x}$ denote the geometric point above $\sigma(\eta)$ induced from $\bar{\eta}$.
Let $H$ be the intersection of the kernels of actions 
of $\pi_1(X,\bar{x})$
on the fibers $(X_n)_{\bar{x}}$ where $n$ runs in $\N$.
We need to show that $\pi_1(X,\bar{x})/H$
contains an open pro-$\ell$ subgroup.

Using the fact that the tame fundamental group $\pi_1^{\tame}(X_{\bar{\eta}},\bar{x})$ is topologically finitely generated, one can prove that the quotient of the group $\pi_1(X,\bar{x})$ by the intersection $H_Y$ of the kernels of its actions on the fibers $(Y_n)_{\bar{x}}, n\in \N$ contains an open pro-$\ell$ subgroup (see the last part of the proof of Lemma~3.1 of \cite{MR3024821}).

Let $H_S'$ be the intersection of the kernels of actions of $\pi_1(S,\bar{\eta})$ on the fibers $(S_n)_{\bar{\eta}}$ where $n$ runs in $\N$ and let $H_S$ be the inverse image of $H_S'$ with respect to the homomorphism $\pi_1(X,\bar{x})\ra \pi_1(S,\bar{\eta})$. 
By the choice of $S_n$, the group $\pi_1(S,\bar{\eta})/H_S'$ contains an open pro-$\ell$ subgroup. 
Since we have a surjection 
\[
 \pi_1(X,\bar{x})/(H_Y\cap H_S)\ra \pi_1(X,\bar{x})/H
\] and an injection
\[
 \pi_1(X,\bar{x})/(H_Y\cap H_S)\ra \pi_1(X,\bar{x})/H_Y\times \pi_1(S,\bar{\eta})/H_S',
\]
 the group $\pi_1(X,\bar{x})/H$ also contains an open pro-$\ell$ subgroup. 
\end{proof}

To use the above lemma, we show that there exists a chain of split fibrations ending with a totally real curve.

\begin{defn}
A sequence of schemes $X_n\ra X_{n-1}\ra\cdots\ra X_1$ 
is called \emph{a chain of split fibrations} if 
the morphism $X_{i+1}\ra X_i$ is an elementary fibration which admits a section $X_i\ra X_{i+1}$
for each $i=1,\ldots,n-1$ .
\end{defn}

\begin{lem} \label{lem:totally real elementary fibration}
Let $K$ be a totally real field and
$X$ an $n$-dimensional irreducible regular $\calO_K$-scheme 
with enough totally real curves.
Then there exist an \'etale $X$-scheme $X_n$, a totally real curve $X_1$ and a chain of split fibrations $X_n\ra \cdots\ra X_1$.
\end{lem}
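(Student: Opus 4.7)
The plan is to prove this lemma by induction on the dimension $n$.

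For the base case $n = 1$: since $X_K$ has relative dimension $0$ over $\Spec K$ and is irreducible, $X_K = \Spec L$ for some finite extension $L/K$. Geometric irreducibility in characteristic $0$ forces $L = K$. Hence $X$ is a $1$-dimensional regular flat $\calO_K$-scheme of finite type with generic fiber $\Spec K$, i.e. an open subscheme of $\Spec \calO_K$. Since $K$ is totally real, $X$ is itself a totally real curve, and I can take $X_1 := X$.

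For the inductive step $n \geq 2$: the generic fiber $X_K$ is smooth and geometrically irreducible over $K$ of dimension $n-1$, and $X$ is smooth over $\calO_K$ above a dense open subset $B \subset \Spec \calO_K$. I would apply a theorem of Artin (SGA 4, Exp.~XI, Th\'eor\`eme 3.1) on the existence of elementary fibrations in an \'etale neighborhood of a point, in a suitable relative form over $B$, to produce an \'etale morphism $X' \to X$ with $X'$ nonempty and irreducible, together with an elementary fibration $\pi \colon X' \to S$ over some irreducible smooth separated $\calO_K$-scheme $S$ of dimension $n-1$ with geometrically irreducible generic fiber. By Artin's construction (projecting from a generic hyperplane in a projective compactification), and after possibly enlarging $X'$ by a further \'etale cover, I can arrange that $\pi$ admits a section $\sigma \colon S \to X'$.

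I then need to verify that $S$ itself is an $\calO_K$-scheme with enough totally real curves, i.e.\ that $S_K(\R) \neq \emptyset$ for every real place of $K$. For this I would use $X_K(\R) \neq \emptyset$ together with the approximation results of Section~\ref{section:tatally real curves} (ultimately Theorem~\ref{thm:Moret-Bailly}) to choose the \'etale neighborhood $X' \to X$ so that $X'_K$ carries a real point above any prescribed real point of $X_K$; composing with $\pi$ then produces a real point of $S_K$ over every real place of $K$. With $S$ now satisfying the hypotheses of the lemma in dimension $n-1$, the inductive hypothesis yields an \'etale $S$-scheme $S_{n-1}$ and a chain of split elementary fibrations $S_{n-1} \to S_{n-2} \to \cdots \to S_1$ whose foot $S_1$ is a totally real curve. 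Setting $X_n := X' \times_S S_{n-1}$, the composition $X_n \to X' \to X$ is \'etale, and the projection $X_n \to S_{n-1}$ is the base change of $\pi$ along $S_{n-1}\to S$, hence is an elementary fibration admitting the base change of $\sigma$ as a section. Concatenating yields the desired chain $X_n \to S_{n-1} \to \cdots \to S_1$.

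The main obstacle is the propagation of the ``enough totally real curves'' hypothesis from $X$ to the base $S$ of the elementary fibration; this is not automatic from Artin's theorem and requires combining the \'etale-local existence of elementary fibrations with the approximation techniques built on Moret-Bailly to ensure the needed real points on $S_K$ at every real place of $K$.
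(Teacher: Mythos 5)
Your proposal has the right skeleton (induction on dimension, Artin's theorem on elementary fibrations, and the recognition that the bottleneck is propagating the ``enough totally real curves'' hypothesis from $X$ to the base $S$), but the step you flag as the main obstacle is exactly where your argument has a gap, and the mechanism you suggest for overcoming it is not the one the paper uses and would not work as stated.

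The paper's key move is to apply Lemma~\ref{lem:totally real points} \emph{first}, before invoking Artin: it replaces $K$ by a totally real finite extension $L$ and $X$ by a nonempty open of $X\otimes_{\calO_K}\calO_L$ so that the generic fiber $X_K$ actually has a \emph{$K$-rational point} $x$. This is essential for two independent reasons. First, the version of Artin's theorem applicable over a number field (Exp.~XI, Prop.~3.3, for a perfect infinite base field) is applied to the pair $(X_K,x)$, so one needs a rational point to start the construction. Second, and more importantly for your concern, the paper then builds the section by taking an \'etale morphism $\alpha\colon V_K\to \A^1_{S_K}$ through $x$, choosing a section $\tau\colon S_K\to\A^1_{S_K}$ through $\alpha(x)$, and replacing $S_K$ by the connected component $S_K'$ of $S_K\times_{\tau,\A^1_{S_K}}V_K$ containing the $K$-rational point $(\pi(x),x)$. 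This forces $S_K'(K)\neq\emptyset$, and the existence of a $K$-rational point on a connected regular $K$-scheme gives geometric integrality of $S_K'$ \emph{and} automatically produces real points at \emph{every} real place of $K$. Your suggestion --- ``use $X_K(\R)\neq\emptyset$ together with Moret--Bailly to choose the \'etale neighborhood $X'\to X$ so that $X'_K$ carries a real point above any prescribed real point of $X_K$'' --- is applied at the wrong stage and is not clearly executable: Artin's theorem produces one \'etale neighborhood around one chosen point, so you cannot independently adjust it to catch a real point at each archimedean place; and even if you could, you would still need to check these real points push forward coherently to $S_K$. Pushing the Moret--Bailly step to \emph{before} Artin's theorem, in the form of upgrading to a $K$-rational point, is the insight that makes everything downstream straightforward, and it is missing from your write-up.

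Two smaller remarks: the paper also treats $\dim X=2$ separately (there $S_K=\Spec K$ and $X_K\to\Spec K$ is already an elementary fibration after compactifying), and the section $\sigma\colon S\to X$ is produced by the explicit $\tau$-construction rather than the vague ``after possibly enlarging $X'$ by a further \'etale cover.'' Your base case $n=1$ is fine (geometric irreducibility already forces the generic fiber to be $\Spec K$, so $X$ is a totally real curve).
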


\begin{proof}
We prove the lemma by induction on $n=\dim X$. When $\dim X=1$, the lemma holds by assumption. Thus we assume $\dim X\geq2$.

By induction on $\dim X$, it suffices to prove that after replacing $K$ by a totally real field extension and $X$ by a nonempty \'etale $X$-scheme,
there exist an irreducible regular $\calO_K$-scheme $S$ with enough totally real curves and an elementary fibration $X\ra S$ with a section $S\ra X$.

It follows from Lemma~\ref{lem:totally real points} (i) that there exists a totally real extension $L$ of $K$ such that $X_K(L)\neq\emptyset$.
Replacing $K$ by $L$ and $X$ by a nonempty open subscheme of $X\otimes_{\calO_K}\calO_L$ that is \'etale over $X$, we may further assume that the generic fiber $X_K\ra \Spec K$ has a section $x\colon \Spec K\ra X_K$. We also denote the image of $x$ in $X_K$ by $x$.

If $\dim X=2$, then $X_K$ is a smooth and geometrically connected curve over $K$. Take the smooth compactification $\overline{X}_K$ of $X_K$ over $K$.
Then the structure morphism $X_K\ra \Spec K$ has the factorization
$X_K\subset \overline{X}_K\ra \Spec K$ 
and thus it is an elementary fibration with a section $x$. 
After shrinking $X$, we can spread it out into an elementary fibration $X\ra S$ over an open subscheme $S$ of $\Spec \calO_K$ such that it admits a section $S\ra X$.

Now assume $\dim X\geq 3$. We apply Artin's theorem on elementary fibration (Proposition~3.3 in Expos\'{e} XI of \cite{MR0354654}) to the pair $(X_K, x)$, and by shrinking $X$ if necessary we get an elementary fibration $\pi\colon X_K\ra S_K$ over $K$ with a geometrically irreducible smooth $K$-scheme $S_K$.
Note that this theorem holds if the base field is perfect and infinite.

Since $X_K$ is smooth over $S_K$, there exist an open neighborhood $V_K$ of $x$ in $X_K$ and an \'etale morphism $\alpha\colon V_K\ra \A^1_{S_K}$ such that $\pi|_{V_K}\colon V_K\ra S_K$ admits a factorization 
\[
 V_K\stackrel{\alpha}{\lra}\A^1_{S_K}\ra S_K.
\]
Take a section $\tau\colon S_K\ra \A^1_{S_K}$ of the projection such that
$\alpha(x)$ lies in $\tau(S_K)$.

Consider  the connected component $S'_K$ of $S_K\times_{\tau,\A^1_{S_K}}V_K$ that contains the $K$-rational point $(\pi(x), x)$.
This is \'etale over $S_K$ and satisfies $S'_K(K)\neq \emptyset$. 
Moreover, $S'_K$ is geometrically integral over $K$ since it is a connected regular $K$-scheme containing a $K$-rational point.

We replace $S_K$ by  $S'_K$ and $X_K$ by $X_K\times_{S_K}S'_K$.
By this replacement, the elementary fibration $\pi\colon X_K\ra S_K$ admits a section and $S_K(K)\neq \emptyset$.
After shrinking $X$, we can spread it out into an elementary fibration $X\ra S$ with a section $S\ra X$, where $S$ is an irreducible regular scheme which is flat and of finite type over $\calO_K$ with geometrically irreducible generic fiber and contains a $K$-rational point.
The existence of a $K$-rational point implies that $S$ has enough totally real curves. 
\end{proof}

\begin{proof}[Proof of Proposition~\ref{prop:Dr-prop2.12}]
First note that if $\alpha^\ast(f)|_{U''}$ has a kernel for a nonempty \'{e}tale $X$-scheme $\alpha\colon X''\ra X$ 
and a dense open subscheme $U''\subset X''$, then so does $f|_{\alpha(U'')}$.

Let $n$ denote the dimension of $X$.
Replacing $X$ by the image of $X'\ra X$, we may assume that $X'\ra X$ is surjective.

Take a chain of split fibrations $X_n\ra X_{n-1}\ra\cdots\ra X_1$ with
a totally real curve $X_1$ as in Lemma~\ref{lem:totally real elementary fibration}.
We regard $X_1$ as an $X$-scheme via $X_n\ra X$ and the sections $X_i\ra X_{i+1}$.
Put $X'_1=X'\times_XX_1$. This is a nonempty scheme.
For $i=2,\ldots,n$ we put $X_i'=X_i\times_{X_1}X_1'$ via the morphism
$X_i\ra X_{i-1}\ra \cdots \ra X_1$.
Then $X_n'\ra X_{n-1}'\ra \cdots \ra X_1'$ is a chain of split fibrations.

Since $f|_{X_1}$ lies in $\LS_r(X_1)$ by assumption (i), we have $f|_{X'_1}=(f_{X_1})|_{X'_1}\in \LS_r(X'_1)$.
Then we get the result for $(X_2',f|_{X_2'})$ by Lemma~\ref{lem:induction in Dr-lem3.1}.
Repeating this argument for the chain of split fibrations
$X_n'\ra\cdots \ra X_2'$ we get the result for $(X_n',f|_{X_n'})$.
Applying the remark at the beginning to the morphism $X'_n\ra X$, we get the result for $(X,f)$.
\end{proof}

For the later use, we prove  variants of Lemma~\ref{lem:totally real elementary fibration}.
The proof given below is similar to that of Lemma~\ref{lem:totally real elementary fibration}, but instead of Lemma~\ref{lem:totally real points}
we will use Corollary~\ref{cor:Dr-prop2.17}
and Theorem~\ref{thm:Dr-thm2.15 for CM}.

\begin{lem}\label{lem:var of totally real elementary fibration}
\hfill
\begin{enumerate}
 \item With the notation as in Lemma~\ref{lem:totally real elementary fibration}, suppose that we are given a connected \'etale covering $Y\ra X$.
Then there exist an \'etale $X$-scheme $X_n$, a totally real curve $X_1$, and a chain of split fibrations $X_n\ra \cdots\ra X_1$ 
such that $X_1\times_{X}Y$ is connected.
Here $X_1\ra X$ is the composite of sections $X_{i+1}\ra X_i$ and $X_n\ra X$.
 \item Let $F$ be a CM field and
$Z$ an $n$-dimensional irreducible regular $\calO_F$-scheme with geometrically irreducible generic fiber. Let $Y\ra Z$ be a connected \'etale covering.
Then there exist an \'etale $Z$-scheme $Z_n$, a CM curve $Z_1$, and a chain of split fibrations $Z_n\ra \cdots\ra Z_1$ 
such that $Z_1\times_{Z}Y$ is connected.
Here $Z_1\ra Z$ is the composite of sections $Z_{i+1}\ra Z_i$ and $Z_n\ra Z$.
\end{enumerate}
\end{lem}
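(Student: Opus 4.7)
The plan is to follow the inductive structure of the proof of Lemma~\ref{lem:totally real elementary fibration} (resp.~its CM analogue) and replace the use of Lemma~\ref{lem:totally real points} by Corollary~\ref{cor:Dr-prop2.17} (resp.~Theorem~\ref{thm:Dr-thm2.15 for CM}). The effect is that the totally real (resp.~CM) point produced at the start of the inductive step is \emph{inert} in $Y\to X$ (resp.~$Y\to Z$), and inertness propagates through Artin's elementary fibration construction so that the section obtained pulls $Y$ back to a connected \'etale covering of the base.

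For part~(i) I induct on $n=\dim X$. The base case $n=1$ is immediate: take $X_1=X$, so $X_1\times_X Y=Y$ is connected by hypothesis. For the inductive step $n\geq 2$, replace $Y$ by its Galois closure (still connected and \'etale over some open $U\subset X$), let $G$ be its Galois group, and set $H:=\ker(\pi_1(U)\to G)$; since $G$ is finite, it contains the trivial subgroup as an open pro-$\ell$ subgroup. Apply Corollary~\ref{cor:Dr-prop2.17} with the datum $(Y,H,S=\emptyset)$ to obtain a totally real curve with fraction field $L$ and an $L$-valued point $x\in X(L)$ inert in $Y\to X$. Replace $K$ by $L$ and $X$ by an \'etale $X$-scheme as in the original proof so that $x$ becomes a $K$-rational point of $X_K$. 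Then proceed as in Lemma~\ref{lem:totally real elementary fibration}: use either the smooth compactification of $X_K$ (if $\dim X=2$) or Artin's theorem on elementary fibrations applied to $(X_K,x)$ (if $\dim X\geq 3$) to produce an elementary fibration $X\to S$ together with a section $\sigma\colon S\to X$ whose image contains $x$, where $S$ is an irreducible regular $\calO_K$-scheme with enough totally real curves. The inertness of $x$ gives a surjection $\pi_1(x)\twoheadrightarrow G$ factoring through $\pi_1(\sigma(S))$, so $\sigma^*Y\to S$ has connected generic fiber and hence is itself a connected \'etale covering. Applying the inductive hypothesis to $(S,\sigma^*Y)$ yields a chain of split fibrations $S_{n-1}\to\cdots\to S_1$ with $S_{n-1}$ \'etale over $S$ and $S_1\times_S\sigma^*Y$ connected. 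Setting $X_n:=X\times_S S_{n-1}$ and $X_i:=S_i$ for $i<n$ gives the desired chain, and $X_1\times_X Y=S_1\times_S\sigma^*Y$ is connected.

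Part~(ii) proceeds identically, using Theorem~\ref{thm:Dr-thm2.15 for CM} in place of Corollary~\ref{cor:Dr-prop2.17} (with $S=\emptyset$; Remark~\ref{rem:sm and sep assump for Dr-thm2.15} allows the regularity hypothesis to replace the usual smooth and separated assumptions). The base of the elementary fibration in each inductive step is automatically an irreducible regular $\calO_F$-scheme with geometrically irreducible generic fiber, as required by the inductive hypothesis. The main obstacle I expect is ensuring that the section $\sigma$ produced in Artin's construction really does pass through the specific inert point $x$, so that inertness can be transported to $\sigma^*Y$; this is built into the construction, since Artin's theorem is applied to the pointed scheme $(X_K,x)$ and the subsequent passage to the connected component of $S_K\times_{\A^1_{S_K}}V_K$ containing $(\pi(x),x)$ preserves this containment.
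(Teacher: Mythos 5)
Your proof is correct and follows essentially the same route as the paper's: both replace the use of Lemma~\ref{lem:totally real points} in the inductive argument of Lemma~\ref{lem:totally real elementary fibration} by an application of Corollary~\ref{cor:Dr-prop2.17} (resp.~Theorem~\ref{thm:Dr-thm2.15 for CM}) with $S=\emptyset$, so that the totally real (resp.~CM) point produced is inert in $Y\to X$, and then verify that connectedness of $\sigma^*Y$ propagates through Artin's elementary fibration construction. Two minor remarks: passing to the Galois closure of $Y$ is superfluous, since one can take $H=\pi_1(U)$ in Corollary~\ref{cor:Dr-prop2.17} and conclude inertness in the given $Y$ directly; and the phrase ``connected generic fiber'' should read ``connected fiber over $\pi(x)$'' (a $K$-rational point of $S_K$), which is all that is needed to conclude $\sigma^*Y$ is connected, as in the paper.
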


\begin{proof} 
First we prove (i) by induction on $n=\dim X$.
Since the claim is obvious when $\dim X=1$, we assume $\dim X\geq 2$.

By induction on $\dim X$, it suffices to prove that after replacing $K$ by a totally real field extension, $X$ by a nonempty \'etale $X$-scheme, and 
the covering $Y\ra X$ by its pullback, 
there exist an irreducible regular $\calO_K$-scheme $S$ with enough totally real curves and an elementary fibration $X\ra S$ with a section $S\ra X$ such that
$S\times_X Y$ is connected.
The construction of such an $S$ will be the same as that of Lemma~\ref{lem:totally real elementary fibration}.

It follows from Corollary~\ref{cor:Dr-prop2.17} 
and Remark~\ref{rem:sm and sep assump for Dr-thm2.15}
that there exist a totally real extension $L$ of $K$ 
and an $L$-rational point $x\in X(L)$
such that $\Spec L\times_{x,X}Y$ is connected.
Note that 
$Y\otimes_{\calO_K}\calO_L$ is connected
because $Y\otimes_{\calO_K}\calO_L\ra X\otimes_{\calO_K}\calO_L$
is an \'etale covering with connected base and 
\[
 \Spec L\times_{x,(X\otimes_{\calO_K}\calO_L)}(Y\otimes_{\calO_K}\calO_L)=\Spec L\times_{x,X}Y
\]
is connected.
Thus replacing $K$ by $L$, $X$ by a nonempty open subscheme of $X\otimes_{\calO_K}\calO_L$ that is \'etale over $X$, and $Y$ by its pullback, we may further assume that the generic fiber $X_K\ra \Spec K$ has a section $x\colon \Spec K\ra X_K$
such that $\Spec K\times_{x,X}Y$ is connected. We also denote the image of $x$ in $X_K$ by $x$.

If $\dim X=2$,  the morphism $X_K\ra \Spec K$ is an elementary fibration with a section $x$. 
After shrinking $X$, we can spread it out into an elementary fibration $X\ra S$ over an open subscheme $S$ of $\Spec \calO_K$ such that it admits a section $S\ra X$. By construction, $S\times_X Y$ is connected.

Now assume $\dim X\geq 3$. We apply Artin's theorem on elementary fibration to the pair $(X_K, x)$, and by shrinking $X$ if necessary we get an elementary fibration $\pi\colon X_K\ra S_K$ over $K$ with a geometrically irreducible smooth $K$-scheme $S_K$.

By smoothness, there exist an open neighborhood $V_K$ of $x$ in $X_K$ and an \'etale morphism $\alpha\colon V_K\ra \A^1_{S_K}$ such that $\pi|_{V_K}\colon V_K\ra S_K$ admits a factorization $V_K\stackrel{\alpha}{\ra}\A^1_{S_K}\ra S_K$.
Take a section $\tau\colon S_K\ra \A^1_{S_K}$ of the projection such that
$\alpha(x)$ lies in $\tau(S_K)$.

Consider the connected component $S'_K$ of $S_K\times_{\tau,\A^1_{S_K}}V_K$ that contains the $K$-rational point $(\pi(x), x)$.
As is shown in Lemma~\ref{lem:totally real elementary fibration}, $S_K'$ is \'etale over $S_K$ and geometrically integral over $K$, and $S_K'(K)\neq \emptyset$.

The section $\tau$ defines the morphism
$S'_K \ra X_K\times_{S_K}S'_K \ra X_K$.
The composite of this morphism and $(\pi(x),x)\colon \Spec K\ra S'_K$ coincides with $x\colon \Spec K\ra X_K$.
Since $S'_K\times_XY\ra S'_K$ is an \'etale covering with connected base and
\[
\Spec K\times_{(\pi(x),x),S_K'}(S_K'\times_XY) = \Spec K\times_{x, X}Y
\]
is connected, it follows that $S_K'\times_XY$ is connected.

We replace $S_K$ by $S'_K$, $X_K$ by $X_K\times_{S_K}S'_K$ and
$Y_K$ by $Y_K\times_{S_K}S'_K$.
By this replacement, the elementary fibration $\pi\colon X_K\ra S_K$ admits a section such that $S_K(K)\neq\emptyset$ and $S_K\times_{X_K}Y_K$ is connected.
As is discussed in the last paragraph of the proof of Lemma~\ref{lem:totally real elementary fibration}, after shrinking $X$, we can spread out $X_K\ra S_K$ and $Y_K\ra S_K$ into an elementary fibration $X\ra S$ with a section $S\ra X$ and a covering $Y\ra X$, where $S$ is an irreducible regular $\calO_K$-scheme with enough totally real curves.
Since $S\times_X Y$ is connected by construction, this $S$ works.

For part (ii),
it is easy to verify that the same argument works
if we apply Theorem~\ref{thm:Dr-thm2.15 for CM} instead of Corollary~\ref{cor:Dr-prop2.17}.
\end{proof}

Next we show that if $f$ has a kernel and arises from a lisse sheaf over every
totally real curve then it actually arises from a lisse sheaf.
\begin{prop} \label{prop:Dr-prop2.13}
Let $K$ be a totally real field and X an irreducible smooth separated
$\calO_K[\ell^{-1}]$-scheme with enough totally real curves.
Suppose that $f\in \widetilde{\LS}_r(X)$ satisfies the following conditions:
\begin{enumerate}
  \item $f$ arises from a lisse sheaf over every totally real curve.
  \item $f$ has a kernel.
\end{enumerate}
Then $f\in \LS_r(X)$.
\end{prop}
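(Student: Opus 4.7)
The plan is to construct a continuous semisimple representation $\rho\colon \pi_1(X) \to \GL_r(E_\lambda)$ with $\det(1-\rho(\Frob_x)t) = f_x(t)$ for every $x\in|X|$; the associated lisse $E_\lambda$-sheaf then witnesses $f \in \LS_r(X)$. I would follow Drinfeld's proof of Proposition~2.13 of \cite{MR3024821}, substituting Corollary~\ref{cor:Dr-prop2.17} (the approximation theorem for totally real curves) for Drinfeld's approximation theorem over arbitrary regular curves.

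Using assumption (ii), fix a closed normal subgroup $H \subset \pi_1(X)$ and open normal subgroups $H_n \supset H$ (replacing each by its normal core, which still contains $H$) such that $\pi_1(X)/H$ contains an open pro-$\ell$ subgroup and $f|_{X_n}$ is trivial modulo $\fkm^n$, where $X_n\to X$ is the Galois cover corresponding to $H_n$. For each $n$, apply Corollary~\ref{cor:Dr-prop2.17} with $U = X$, the closed normal subgroup $H$, and $S = \emptyset$ (permitted by Remark~\ref{rem:sm and sep assump for Dr-thm2.15}) to obtain a totally real curve $\varphi_n\colon C_n \to X$ for which $\pi_1(C_n) \twoheadrightarrow \pi_1(X)/H$, and a fortiori $\pi_1(C_n) \twoheadrightarrow \pi_1(X)/H_n$, is surjective. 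Assumption (i) then yields a semisimple continuous representation $\rho_{C_n}\colon \pi_1(C_n) \to \GL_r(E_\lambda)$ realizing $\varphi_n^\ast f$.

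Because the pullback of $f$ to $C_n\times_X X_n$ is trivial modulo $\fkm^n$, the restriction of $\rho_{C_n}$ to $\ker(\pi_1(C_n) \twoheadrightarrow \pi_1(X)/H_n)$ has every Frobenius acting with characteristic polynomial $(1-t)^r$ mod $\fkm^n$. Chebotarev density plus Brauer-Nesbitt handles the mod $\fkm$ descent directly, and the pro-$\ell$ structure on $\pi_1(X)/H$ allows one to lift this factorization inductively to $\calO/\fkm^n$ by controlling the obstruction classes in Galois cohomology with $\ell$-torsion coefficients (or, equivalently, by invoking pseudo-character theory \`a la Rouquier and Chenevier over $\calO/\fkm^n$). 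One thus obtains $\bar\rho_n\colon \pi_1(X)/H_n \to \GL_r(\calO/\fkm^n)$ realizing $f$ mod $\fkm^n$. Applying Corollary~\ref{cor:Dr-prop2.17} once more to produce a totally real curve through any given $x\in|X|$ confirms that $\bar\rho_n(\Frob_x)$ has characteristic polynomial $f_x(t)$ mod $\fkm^n$ and shows $\bar\rho_n$ is independent of the chosen $\varphi_n$. Taking the inverse limit (with compatible $\calO$-lattice choices, justified by Brauer-Nesbitt uniqueness and compactness of $\GL_r(\calO)$) yields the sought $\rho$.

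The main obstacle is the descent of $\rho_{C_n} \bmod \fkm^n$ to $\pi_1(X)/H_n$ for $n>1$: modulo $\fkm$ it is standard Brauer-Nesbitt, but passing to higher $\fkm^n$ requires genuinely using the "has a kernel" hypothesis, namely the existence of the pro-$\ell$ subgroup of $\pi_1(X)/H$, in order to kill the deformation-theoretic obstructions and force the descended representation to exist. The approximation theorem supplies both the abundance of totally real curves needed to specify the character data and the ability to verify its values at every closed point of $X$.
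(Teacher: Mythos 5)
Your plan has the same overall shape as the paper's proof (find a totally real curve $\varphi\colon C\ra X$ with $\pi_1(C)\twoheadrightarrow \pi_1(X)/H$, realize $\varphi^\ast f$ by a semisimple $\rho_C$, descend it to $\pi_1(X)$, then verify the characteristic polynomials at every closed point), but there is a genuine gap in the descent step, and a second one in the verification step, which together mean the argument does not close.

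\textbf{The descent step.} The paper deduces that $\Ker\rho_C$ contains $H_C=\Ker\bigl(\pi_1(C)\ra\pi_1(X)/H\bigr)$ by working over $E_\lambda$ directly: $\rho_C|_{H_C}$ is semisimple by Clifford theory; the ``has a kernel'' hypothesis together with Chebotarev forces every element of $\rho_C(H_C)$ to have characteristic polynomial $(1-t)^r$ exactly (since this holds modulo $\fkm^n$ for \emph{all} $n$); and a semisimple representation in which every element is unipotent is trivial. This is Drinfeld's Lemma~4.1, which the paper invokes. Your proposal instead tries to produce descents $\bar\rho_n\colon\pi_1(X)/H_n\ra\GL_r(\calO/\fkm^n)$ one $n$ at a time and assemble them by an inverse limit. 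The trouble is that knowing the characteristic polynomials on $\Ker\bigl(\pi_1(C_n)\ra\pi_1(X)/H_n\bigr)$ are $(1-t)^r$ modulo $\fkm^n$ does \emph{not} imply that the restriction of $\rho_{C_n}$ to that subgroup is trivial modulo $\fkm^n$ for any choice of lattice. Already for $r=2$, a diagonal matrix $\mathrm{diag}(1+\pi,(1+\pi)^{-1})$ has characteristic polynomial $\equiv(1-t)^2\bmod\fkm^2$ but is not $\equiv I\bmod\fkm^2$; semisimplicity over $E_\lambda$ does not repair this. So the ``obstruction class / pseudo-character'' mechanism you invoke is being asked to bridge a gap that is not a lifting problem at all, and the inductive lifting picture is the wrong framework. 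The clean route, as in the paper, is to show vanishing on $H_C$ once and for all over $E_\lambda$, not modulo $\fkm^n$ for each $n$ separately. (A related issue: your $\bar\rho_n$ are produced from different curves $C_n$, so compatibility in the inverse system is not automatic; the paper avoids this by never forming the limit.)

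\textbf{The verification step.} Even after $\rho_C$ is a representation of $\pi_1(X)/H$, one must show $\det\bigl(1-t\,\rho_C(\Frob_x)\bigr)=f_x(t)$ at \emph{every} closed point $x\in X$, not merely those lying on $C$. The paper does this by choosing, via Faltings' finiteness lemma (Satz~5 of \cite{MR718935}), a finite set $T\subset\lvert C\rvert$ such that semisimple $E_\lambda$-representations of $\pi_1(C)$ are determined by their Frobenius traces on $T$, and then using Corollary~\ref{cor:Dr-prop2.17} to pass a totally real curve $C'$ simultaneously through $x$ and through $\varphi(T)$, with $\pi_1(C')\twoheadrightarrow\pi_1(X)/H$. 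Comparing $\rho_C$ and $\rho_{C'}$ on $T$ then gives $\rho_C\cong\rho_{C'}$, hence the value at $x$. Your proposal asserts that producing a curve through $x$ ``confirms'' the value and ``shows $\bar\rho_n$ is independent of the chosen $\varphi_n$,'' but without the Faltings pinning-down argument there is nothing to compare against: the representation coming from a curve through $x$ could a priori differ from the one you already built. This piece of the paper's proof is essential and is missing from your sketch.
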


\begin{proof}
We follow Section 4 of \cite{MR3024821}.
Since $f$ has a kernel, we take a closed subgroup $H$ of $\pi_1(X)$
as in the definition of having a kernel.
In particular, $\pi_1(X)/H$ contains an open pro-$\ell$ subgroup.

By Corollary~\ref{cor:Dr-prop2.17}, there exists a totally real curve $C$ with a morphism $\varphi\colon C\ra X$ such that $\varphi_\ast\colon\pi_1(C)\ra \pi_1(X)/H$ is surjective.
By assumption (i), for any such pair $(C,\varphi)$, the pullback  $\varphi^\ast(f)$ arises from
a semisimple representation $\rho_C\colon \pi_1(C)\ra\GL_r(E_\lambda)$.
Define 
\[
H_C:=\Ker\bigl(\varphi_\ast\colon\pi_1(C)\ra\pi_1(X)/H\bigr).
\]

Then condition (ii) in the definition of having a kernel, together with the Chebotarev density theorem, shows that $\Ker \rho_C$ contains $H_C$.
See Lemma~4.1 of \cite{MR3024821} for details.
Thus we regard $\rho_C$ as a representation
\[
\rho_C\colon \pi_1(X)\ra\pi_1(X)/H\ra \GL_r(E_\lambda) 
\] 
of $\pi_1(X)$. Note that $\rho_C|_{\pi_1(C)}$ gives the original representation of $\pi_1(C)$.

We will show that the lisse sheaf on $X$ corresponding to this representation gives $f$. For this, we need to show that 
\[
 \det\bigl(1-t\rho_C(\Frob_x)\bigr)=f_x(t)
\]
for all closed points $x\in X$.
We know that this equality holds for each closed point $x\in \varphi(C)$ such that $\varphi^{-1}(x)$ contains a point whose residue field is equal to $k(x)$.

Take any closed point $x\in X$. We will first construct a curve $C'$ passing through $x$ and some finitely many points on $C$ specified below. We will then construct a lisse sheaf on $C'$ whose Frobenius polynomial at $x$ is $f_x(t)$,
and prove that the lisse sheaf on $C'$ extends over $X$ and
 the corresponding representation of $\pi_1(X)$ coincides with $\rho_C$.

We use a lemma by Faltings; 
define $T_0$
to be the set of closed points of $C$ which have the same image in $\Spec\calO_K$ as that of $x$.
By the theorem of Hermite, the Chebotarev density theorem, and the Brauer-Nesbitt theorem, 
there exists a finite set $T\subset \lvert C\rvert\setminus T_0$ satisfying the following properties:
\begin{enumerate}
 \item $T\ra \Spec\calO_K$ is injective.
 \item For any semisimple representations $\rho_1,\rho_2\colon\pi_1(C)\ra \GL_r(E_\lambda)$, the equality $\tr \rho_1(\Frob_y)=\tr \rho_2(\Frob_y)$ for all $y\in T$ implies $\rho_1\cong\rho_2$.
\end{enumerate}
See Satz 5 of \cite{MR718935} or 
Th\'eor\`eme~3.1 of \cite{MR768952} for details.

By Corollary~\ref{cor:Dr-prop2.17} applied to $S=\varphi(T)\cup\{x\}$, there exists a totally real curve $C'$ with a morphism $\varphi'\colon C'\ra X$ such that the map $\varphi'_\ast\colon \pi_1(C')\ra \pi_1(X)/H$ is surjective and for each $y\in \varphi(T)\cup\{x\}$ there exists a point in $\varphi'^{-1}(y)$ whose residue field is equal to $k(y)$.
As discussed before, this pair $(C',\varphi')$ also defines a semisimple representation 
\[
 \rho_{C'}\colon\pi_1(X)\ra\pi_1(X)/H\ra \GL_r(E_\lambda) 
\]
such that $\det\bigl(1-t\rho_{C'}(\Frob_y)\bigr)=f_y(t)$ for each $y\in \varphi(T)\cup\{x\}$. 
Note that the surjectivity of $\varphi_\ast$ implies that
$\rho_{C'}|_{\pi_1(C)}$ is semisimple. 

It follows from property (ii) of $T$ that $\rho_C|_{\pi_1(C)}$ and $\rho_{C'}|_{\pi_1(C)}$ are isomorphic as representations of $\pi_1(C)$. 
Since the map $\varphi_\ast\colon\pi_1(C)\ra \pi_1(X)/H$ is surjective, we have $\rho_C\cong\rho_{C'}$ as representations of $\pi_1(X)/H$ and thus they are also isomorphic as representations of $\pi_1(X)$. 
In particular, we have 
\[
 \det\bigl(1-t\rho_C(\Frob_x)\bigr)=\det\bigl(1-t\rho_{C'}(\Frob_x)\bigr)=f_x(t).
\]
Hence $f$ comes from the lisse sheaf on $X$ corresponding to $\rho_C$.
\end{proof}

We now prove the last proposition of our three key ingredients for Theorem~\ref{thm:Dr-thm2.5}. This concerns extendability of a lisse sheaf on a dense open subset to the whole scheme.
In the proof we use the Zariski-Nagata purity theorem; thus the regularity assumption for $X$ is crucial.
We further need to assume that $X$ is smooth as we use 
Corollary~\ref{cor:Dr-prop2.17} to find a totally real curve passing through 
a given point in a given tangent direction.

\begin{prop}\label{prop:Dr-prop2.14}
Let $K$ be a totally real field and X an irreducible smooth separated $\calO_K[\ell^{-1}]$-scheme with enough totally real curves.
Suppose that $f\in \widetilde{\LS}_r(X)$ satisfies the following conditions:
\begin{enumerate}
  \item $f$ arises from a lisse sheaf over every totally real curve.
  \item There exists a dense open subscheme $U\subset X$ such that $f|_U\in \LS_r(U)$.
\end{enumerate}
Then $f\in \LS_r(X)$.
\end{prop}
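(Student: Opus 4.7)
The plan is to show that the semisimple representation $\rho_U \colon \pi_1(U) \to \GL_r(E_\lambda)$ attached to $f|_U$ factors through $\pi_1(X)$; by the Chebotarev density theorem, the factored representation will then define a lisse sheaf on $X$ whose Frobenius characteristic polynomials recover $f$. Since $X$ is regular, the Zariski--Nagata purity theorem for \'etale coverings reduces this factorization to showing that $\rho_U$ is unramified at every codimension one point $\xi \in X \setminus U$.

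Fix such a $\xi$, let $D = \overline{\{\xi\}}$, and pick a closed point $s \in D$ at which $D$ is regular inside $X$ together with a one-dimensional subspace $l_s \subset T_sX$ transversal to $T_sD$. The image $\rho_U(\pi_1(U))$ is a compact subgroup of $\GL_r(E_\lambda)$, hence contains an open pro-$\ell$ subgroup. I would apply Corollary~\ref{cor:Dr-prop2.17} with $Y = X$, $H = \ker(\rho_U)$, $S = \{s\}$, and $l_s$ as above, obtaining a totally real curve $C$, a morphism $\varphi \colon C \to X$ with $\varphi^{-1}(U) \neq \emptyset$, and a section $\sigma\colon \{s\} \to C$ satisfying $\Image(T_{\sigma(s)}C \to T_sX) = l_s$ together with the surjectivity of $\pi_1(\varphi^{-1}(U)) \to \pi_1(U)/\ker(\rho_U)$. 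By hypothesis (i), $f|_C$ arises from a semisimple representation $\rho_C \colon \pi_1(C) \to \GL_r(E_\lambda)$. Thanks to the surjectivity, $\varphi^\ast \rho_U$ has the same image as $\rho_U$ in $\GL_r(E_\lambda)$, so it inherits semisimplicity; since $\varphi^\ast \rho_U$ and $\rho_C|_{\pi_1(\varphi^{-1}(U))}$ have equal Frobenius characteristic polynomials at every closed point of $\varphi^{-1}(U)$, the Brauer--Nesbitt theorem yields $\varphi^\ast \rho_U \cong (\rho_C|_{\pi_1(\varphi^{-1}(U))})^{\mathrm{ss}}$. The right-hand side factors through $\pi_1(C)$, so its kernel contains $\ker(\pi_1(\varphi^{-1}(U)) \to \pi_1(C))$, and hence so does $\ker(\varphi^\ast \rho_U)$.

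Consequently the \'etale cover of $\varphi^{-1}(U)$ cut out by $\ker(\varphi^\ast \rho_U)$ extends to $C$, so by Zariski--Nagata on the regular curve $C$ this cover is unramified at $\sigma(s)$. Transversality of $\varphi$ to $D$ at $s$ then forces unramifiedness of $\rho_U$ at $\xi$: in a formal neighborhood of $s$, a local equation $t$ of $D$ pulls back to a uniformizer at $\sigma(s)$, so the induced embedding of henselian local fields $K_\xi \hookrightarrow K_{\sigma(s)}$ has ramification index one, and a finite Galois extension of $K_\xi$ is unramified if and only if its base change to $K_{\sigma(s)}$ is. The principal technical obstacle is precisely this final step: translating unramifiedness of the pulled-back cover at the closed point $\sigma(s)$ into unramifiedness of $\rho_U$ at the codimension one point $\xi$, uniformly across any possible wild ramification. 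This is handled cleanly by the observation that unramified base change of henselian discretely valued fields preserves unramifiedness of finite Galois extensions.
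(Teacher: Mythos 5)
Your strategy begins the same way the paper does: reduce via Zariski--Nagata purity to showing that $\rho_U$ is unramified at each codimension-one point $\xi\in X\setminus U$, then invoke Corollary~\ref{cor:Dr-prop2.17} to produce a suitable totally real curve and use hypothesis~(i). The fatal gap is the final transition from ``the pullback is unramified at $\sigma(s)$'' to ``$\rho_U$ is unramified at $\xi$.'' There is no embedding of henselian local fields $K_\xi\hookrightarrow K_{\sigma(s)}$ as you claim. The DVR $\calO_{X,\xi}$ has residue field $k(\xi)=k(D)$, a function field of transcendence degree $\dim D$ over $k(s)$, while $\calO_{C,\sigma(s)}$ has residue field $k(s)$; no local injection can exist. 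Concretely, $\calO_{X,\xi}$ is the localization of $\calO_{X,s}$ at the prime of $D$, and elements of $\calO_{X,s}$ not vanishing on $D$ become units in $\calO_{X,\xi}$, yet such an element can pull back to a non-unit in $\calO_{C,\sigma(s)}$ (e.g.\ a transversal parameter $t_1$ vanishing on $C$). So the map $\calO_{X,s}\to\calO_{C,\sigma(s)}$ does not factor through $\calO_{X,\xi}$, and the base-change lemma you cite is inapplicable.

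Moreover, the implication itself — unramifiedness on one transversal curve implies unramifiedness along $D$ — is false in the presence of wild ramification. For instance, over $\A^2_{\F_p}=\Spec\F_p[x,y]$ the Artin--Schreier cover $z^p-z=x/y^m$ is wildly ramified along $\{y=0\}$, yet its pullback to the transversal curve $\{x=0\}$ becomes trivial (hence unramified) at the origin. This is precisely why the paper cannot proceed by choosing \emph{any} point $s$ and transversal direction $l_s$; instead it invokes Drinfeld's specialization argument (Corollary~5.2 of \cite{MR3024821}), a substantive lemma producing a closed point $x$ and direction $l$ with the key property~($\ast$): for \emph{every} regular curve through $x$ in direction $l$ meeting $U$, the pullback of $\calE_U$ is ramified. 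Only with this quantifier can the contradiction be drawn once a totally real such curve is produced via Corollary~\ref{cor:Dr-prop2.17}. You cannot bypass this input. A secondary omission: after extending $\calE_U$ to a lisse sheaf $\calE$ on $X$, the Chebotarev argument on $U$ alone does not show $\det(1-\Frob_x t,\calE_{\bar x})=f_x(t)$ for $x\in X\setminus U$; the paper handles this with a second totally real curve through each such $x$ meeting $U$, and your proof needs an analogous final step.
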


\begin{proof}
We follow Section 5.2 of \cite{MR3024821}.
Let $\calE_U$ be the semisimple lisse $E_\lambda$-sheaf on $U$ corresponding to $f|_U$.
First we show that $\calE_U$ extends to a lisse $E_\lambda$-sheaf on $X$.

Suppose the contrary.
Since $X$ is regular, the Zariski-Nagata purity theorem implies
that there exists an irreducible divisor $D$ of $X$ contained in $X\setminus U$
such that $\calE_U$ is ramified along $D$.
Then by a specialization argument (Corollary~5.2 of \cite{MR3024821}), we can find a closed point $x\in X\setminus U$ and a one-dimensional subspace $l\subset T_xX$ satisfying the following property:
\begin{itemize}
\item[($\ast$)]
Consider a triple $(C,c,\varphi)$ consisting of
a regular curve $C$, a closed point $c\in C$, and a morphism $\varphi\colon C\ra X$ such that $\varphi(c)=x$, $\varphi^{-1}(U)\neq\emptyset$, and $\Image \bigl(T_cC\ra  T_xX\otimes_{k(x)}k(c)\bigr)=l\otimes_{k(x)}k(c)$.
For any such triple, the pullback of $\calE_U$ to $\varphi^{-1}(U)$ is ramified at $c$.
\end{itemize}

Let $H$ be the kernel of the representation $\rho_U\colon \pi_1(U)\ra\GL_r(E_\lambda)$ corresponding to $\calE_U$.
The group $\pi_1(U)/H\cong \Image \rho_U$ contains an open pro-$\ell$ subgroup because $\Image\rho_U$ is a compact open subgroup of $\GL_r(E_\lambda)$.
Therefore by Corollary~\ref{cor:Dr-prop2.17} we
find a totally real curve $C$, a closed point $c\in C$, and a morphism $\varphi\colon C\ra X$ such that
\begin{itemize}
 \item $\varphi(c)=x$ and $k(c)\cong k(x)$,
 \item $\varphi^{-1}(U)\neq\emptyset$ and $\varphi_\ast\colon \pi_1\bigl(\varphi^{-1}(U)\bigr)\ra \pi_1(U)/H$ is surjective, and
 \item $\Image(T_cC\ra T_xX)=l$.
\end{itemize}

Since $\varphi_\ast\colon \pi_1\bigl(\varphi^{-1}(U)\bigr)\ra\pi_1(U)/H$ is surjective, the pullback of $\calE_U$ to $\varphi^{-1}(U)$ is semisimple. 
Thus this lisse $E_\lambda$-sheaf has no ramification at $c$ by assumption (i), which contradicts property ($\ast$).
Hence $\calE_U$ extends to a lisse $E_\lambda$-sheaf $\calE$ on $X$.

Let $f'$ be the element of $\LS_r(X)$ corresponding to $\calE$. 
We know $f|_U=f'|_U$.
Take any closed point $x\in X$.
It suffices to show that $f(x)=f'(x)$.
We can find a totally real curve $C'$, a closed point $c'\in C'$, and a morphism $\varphi'\colon C'\ra X$ such that $\varphi'(c')=x$,  $k(c')=k(x)$, and $\varphi'^{-1}(U)\neq \emptyset$.
Then 
\[
 \varphi'^\ast(f)|_{\varphi'^{-1}(U)}=(f|_U)|_{\varphi'^{-1}(U)}
=(f'|_U)|_{\varphi'^{-1}(U)}=\varphi'^\ast(f')|_{\varphi'^{-1}(U)}.
\]
Since $\varphi'^{-1}(U)\neq \emptyset$, the homomorphism $\pi_1\bigl(\varphi'^{-1}(U)\bigr)\ra \pi_1(C')$ is surjective and thus $\varphi'^\ast(f)=\varphi'^\ast(f')$. 
In particular,  $f(x)=\varphi'^\ast(f)(c')=\varphi'^\ast(f')(c')=f'(x)$. 
\end{proof}

\begin{proof}[Proof of Theorem~\ref{thm:Dr-thm2.5}]
First note that a polynomial-valued map $f$ of degree $r$ in the theorem lies in $\widetilde{\LS}_r(X)$.
One direction of the equivalence is obvious, and
thus it suffices to prove that if $f$ satisfies conditions (i) and (ii), then $f$ lies in $\LS_r(X)$.

First assume that $X$ is separated.
Let $k_\lambda$ be the residue field of $E_\lambda$ and 
$N$ be the cardinality of $\GL_r(k_\lambda)$.
Put $X':=X\otimes_{\Z}\Z[N^{-1}]$.
Then $X'\ra X$ is a dominant \'etale morphism and satisfies the following property:
\begin{quote}
 The pullback $f|_{X'}$ arises from a tame lisse sheaf over every separated smooth curve over a finite field.
\end{quote}

Thus by Proposition~\ref{prop:Dr-prop2.12}, there exists an open dense subscheme $U$ of $X$ such that $f|_U$ has a kernel.
Therefore $f|_U$ lies in $\LS_r(U)$ by Proposition~\ref{prop:Dr-prop2.13} and we have $f\in \LS_r(X)$ by Proposition~\ref{prop:Dr-prop2.14}.

In the general case, we consider a covering $X=\bigcup_i U_i$ by open separated subschemes. Then we can apply the above discussion to each $f|_{U_i}$
and obtain a lisse $E_\lambda$-sheaf $\calE_i$ on $U_i$ 
that represents $f|_{U_i}$.
Since $U_i$ is normal, we can replace $\calE_i$ by its semisimplification 
and assume that each $\calE_i$ is semisimple.

Put $U=\bigcap_i U_i$. This is nonempty, and the restrictions
$\calE_i|_U$ are isomorphic to each other.
Thus $\{ \calE_i\}_i$ glues to a lisse $E_\lambda$-sheaf on $X$ and this sheaf represents $f$.
\end{proof}

We end this section with variants of Theorem~\ref{thm:Dr-thm2.5}.
Condition (i) in Theorem~\ref{thm:var of Dr-thm2.5} or Remark~\ref{rem:var of Dr-thm2.5 for unramified case} is weaker than
that of Theorem~\ref{thm:Dr-thm2.5} since they concern only totally real curves with additional properties.
This weaker condition is essential to use the result of \cite{MR3152941} in the proof of our main theorems in the next section. Theorem~\ref{thm:Dr-thm2.5 for CM} is a variant in the CM case.

\begin{thm}\label{thm:var of Dr-thm2.5}
Let $K$ be a totally real field.
Let $X$ be an irreducible smooth $\calO_K[\ell^{-1}]$-scheme with enough totally real curves. 
An element $f\in \widetilde{\LS}_r(X)$ belongs to $\LS_r(X)$ if and only if it satisfies the following conditions:
 \begin{enumerate}
  \item There exists a connected \'etale covering $Y\ra X$ such that $f$ arises from a lisse sheaf over every totally real curve $C$ with the property that $C\times_{X}Y$ is connected.
  \item The restriction of $f$ to each separated smooth curve over a finite field arises from a lisse sheaf.
 \end{enumerate}
\end{thm}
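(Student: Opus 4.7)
The forward direction is immediate from the definitions, so I focus on the converse: assuming (i) and (ii) I want to produce a lisse $E_\lambda$-sheaf on $X$ realizing $f$. The plan is to run the three-step argument that proves Theorem~\ref{thm:Dr-thm2.5}, namely Propositions~\ref{prop:Dr-prop2.12}, \ref{prop:Dr-prop2.13}, and \ref{prop:Dr-prop2.14}, but to use the enhanced geometric inputs of Lemma~\ref{lem:var of totally real elementary fibration}(i) and Corollary~\ref{cor:Dr-prop2.17} in order to cope with the weaker hypothesis~(i). As in the last paragraph of the proof of Theorem~\ref{thm:Dr-thm2.5}, one first reduces to the case $X$ separated by an open cover and gluing of semisimplifications, so I assume $X$ is separated from now on.

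The variant of Proposition~\ref{prop:Dr-prop2.12} I would need says: under (i) and (ii), some dense open $U\subset X$ carries a restriction $f|_U$ with a kernel. The original proof builds a chain of split fibrations $X_n\to\cdots\to X_1$ ending in a totally real curve $X_1$ via Lemma~\ref{lem:totally real elementary fibration}, and the sole role of hypothesis~(i) of Theorem~\ref{thm:Dr-thm2.5} is to guarantee $f|_{X_1}\in\LS_r(X_1)$ before running the inductive application of Lemma~\ref{lem:induction in Dr-lem3.1}. I would instead apply Lemma~\ref{lem:var of totally real elementary fibration}(i) to the connected \'etale cover $Y\to X$ provided by our hypothesis, producing such a chain with the extra property that $X_1\times_X Y$ is connected; then the weaker hypothesis~(i) of our theorem already gives $f|_{X_1}\in\LS_r(X_1)$, and the remainder of Drinfeld's induction is unaffected.

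The variants of Propositions~\ref{prop:Dr-prop2.13} and \ref{prop:Dr-prop2.14} are handled similarly. Both proofs repeatedly invoke Corollary~\ref{cor:Dr-prop2.17} to produce an auxiliary totally real curve $\varphi\colon C\to X$ with surjectivity of $\varphi_\ast\colon\pi_1(\varphi^{-1}(U))\to\pi_1(U)/H$, prescribed tangent directions at points of $S$, and inertness in a given flat, generically \'etale $X$-scheme. In every such application I would enlarge that input by taking its disjoint union with $Y$, thereby forcing $\varphi(\Spec L)$ to be inert in $Y\to X$. For the \'etale covering $Y\to X$ and a regular curve $C$ with fraction field $L$, inertness is equivalent to $C\times_X Y$ being connected: the finite \'etale cover $C\times_X Y\to C$ has connected base, so its connected components biject with the connected components of its generic fiber $\Spec L\times_X Y$. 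Consequently the resulting curve lies in the class to which hypothesis~(i) of our theorem applies, so $\varphi^\ast f\in\LS_r(C)$, and the arguments of Propositions~\ref{prop:Dr-prop2.13} and \ref{prop:Dr-prop2.14} proceed without change; together with the variant of Proposition~\ref{prop:Dr-prop2.12} this shows $f\in\LS_r(X)$.

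The main technical point to watch is that in each call to Corollary~\ref{cor:Dr-prop2.17} one must be able to impose inertness in $Y\to X$ simultaneously with the surjectivity condition on $\pi_1(U)/H$ and the prescribed tangent conditions at $S$. This is not a real obstacle, however, since these are independent constraints handled by different ingredients of the theorem of Moret-Bailly in the proof of Theorem~\ref{thm:Dr-thm2.15}, and one simply absorbs $Y$ into the $\calO_K$-scheme $Y$ appearing as data~(i) of Corollary~\ref{cor:Dr-prop2.17}. No new geometric input beyond Section~\ref{section:tatally real curves} is required.
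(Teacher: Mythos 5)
Your proof is correct and follows the same approach as the paper: replace Lemma~\ref{lem:totally real elementary fibration} by Lemma~\ref{lem:var of totally real elementary fibration}(i) in the proof of Proposition~\ref{prop:Dr-prop2.12}, and in Propositions~\ref{prop:Dr-prop2.13} and \ref{prop:Dr-prop2.14} add $Y$ to the input of Corollary~\ref{cor:Dr-prop2.17} to force the resulting totally real curve $C$ to have $C\times_X Y$ connected. Your explicit remark that inertness of $\varphi(\Spec L)$ in the connected \'etale cover $Y\ra X$ is equivalent to connectedness of $C\times_X Y$ (via the bijection between components of a finite \'etale cover of the normal connected $C$ and those of its generic fiber) is a useful justification of a step the paper leaves implicit.
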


\begin{proof}
Recall that Theorem~\ref{thm:Dr-thm2.5} is deduced from Propositions~\ref{prop:Dr-prop2.12}, \ref{prop:Dr-prop2.13}, and \ref{prop:Dr-prop2.14} and that these propositions have the same condition (i) that $f$ arises from a lisse sheaf over every totally real curve.
Consider the variant statements of Propositions~\ref{prop:Dr-prop2.12}, \ref{prop:Dr-prop2.13}, and \ref{prop:Dr-prop2.14} where 
we replace condition (i) by 
\begin{enumerate}
  \item[(i')] $f$ arises from a lisse sheaf over 
every totally real curve $C$ such that 
$C\times_{X}Y$ is connected.
\end{enumerate}
It suffices to prove that these variants also hold;
then the theorem is deduced from them in the same way as Theorem~\ref{thm:Dr-thm2.5}.

The variant of Proposition~\ref{prop:Dr-prop2.12} is proved in the same way as Proposition~\ref{prop:Dr-prop2.12} if one uses Lemma~\ref{lem:var of totally real elementary fibration} (i) instead of Lemma~\ref{lem:totally real elementary fibration}.
For the variants of Propositions~\ref{prop:Dr-prop2.13} and \ref{prop:Dr-prop2.14}, the same proof works; 
observe that whenever one uses Corollary~\ref{cor:Dr-prop2.17} 
in the proof to find a totally real curve $C$, one can impose the additional condition that $C\times_X Y$ is connected by adding the covering $Y\ra X$ to the input of Corollary~\ref{cor:Dr-prop2.17}. 
\end{proof}

\begin{rem}\label{rem:var of Dr-thm2.5 for unramified case} 
We need another variant of Theorem~\ref{thm:Dr-thm2.5} to prove Theorem~\ref{thm:main thm-special}:
With the notation as in Theorem~\ref{thm:var of Dr-thm2.5},
suppose further that
\begin{itemize}
 \item $K$ is unramified at $\ell$, and
 \item $X$ extends to an irreducible smooth $\calO_K$-scheme $X'$ with nonempty fiber over each place of $K$ above $\ell$.
\end{itemize}

Then condition (i) in Theorem~\ref{thm:var of Dr-thm2.5}
can be replaced by
\begin{enumerate}
  \item[(i')] There exists a connected \'etale covering $Y\ra X$ such that $f$ arises from a lisse sheaf over 
every totally real curve $C$ with the properties that
\begin{itemize}
 \item $C\times_{X}Y$ is connected and that
 \item the fraction field of $C$ is unramified at $\ell$.
\end{itemize}
\end{enumerate}

This statement is proved in the same way as Theorem~\ref{thm:var of Dr-thm2.5}; it suffices to prove variants of Propositions~\ref{prop:Dr-prop2.12}, \ref{prop:Dr-prop2.13}, and \ref{prop:Dr-prop2.14} where condition (i) in these propositions is replaced by the following condition:
\begin{enumerate}
  \item[] The map $f$ arises from a lisse sheaf over 
every totally real curve $C$ such that 
$C\times_{X}Y$ is connected and the fraction field of $C$
is unramified at $\ell$.
\end{enumerate}

For the proof of the variant of Proposition~\ref{prop:Dr-prop2.12},
we also need to consider the variant of Lemma~\ref{lem:var of totally real elementary fibration} (i) where we further require that the fraction field of $X_1$ is unramified at $\ell$.

We now explain how to prove the variants of Lemma~\ref{lem:var of totally real elementary fibration} (i) and Propositions~\ref{prop:Dr-prop2.12}, \ref{prop:Dr-prop2.13}, and \ref{prop:Dr-prop2.14}.
By the additional condition on $X$, for each place $v$ of $K$ above $\ell$,
 there exist a finite unramified extension $L$ of $K_v$
and a morphism $\Spec \calO_L\ra X'$.
We denote the image of the closed point of $\Spec\calO_L$ by $s_v$.
Since $L$ is unramified over $K_v$, we can find
a horizontal one-dimensional subspace $l_v$ of $T_{s_v}X'$
with respect to $X'\ra \Spec\calO_K$.

If we add $\{s_v\}_{v|\ell}$ and $l_v$ to the input when we use Corollary~\ref{cor:Dr-prop2.17}, the fraction field of the resulting totally real curve is unramified over $K$ at each $v$, hence unramified at $\ell$.
Thus we can prove the variant of Lemma~\ref{lem:var of totally real elementary fibration} (i) in the same way as Lemma~\ref{lem:var of totally real elementary fibration} (i), and the arguments given in Theorem~\ref{thm:var of Dr-thm2.5} work for the current variants of Propositions~\ref{prop:Dr-prop2.12}, \ref{prop:Dr-prop2.13}, and \ref{prop:Dr-prop2.14}.
Hence the statement of this remark follows. 
\end{rem}

\begin{thm}
\label{thm:Dr-thm2.5 for CM}
Let $F$ be a CM field.
Let $Z$ be an irreducible smooth $\calO_F[\ell^{-1}]$-scheme with geometrically irreducible generic fiber.
An element $f\in \widetilde{\LS}_r(Z)$ belongs to $\LS_r(Z)$ if and only if it satisfies the following conditions:
 \begin{enumerate}
  \item There exists a connected \'etale covering $Y\ra Z$ such that $f$ arises from a lisse sheaf over every CM curve $C$ with the property that $C\times_{Z}Y$ is connected. 
  \item The restriction of $f$ to each separated smooth curve over a finite field arises from a lisse sheaf.
 \end{enumerate}
\end{thm}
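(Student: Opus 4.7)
The plan is to mimic the proof of Theorem~\ref{thm:var of Dr-thm2.5}, replacing everywhere "totally real" by "CM" and using the CM infrastructure already built in Section~\ref{section:tatally real curves} and Lemma~\ref{lem:var of totally real elementary fibration}\,(ii). Concretely, I will establish CM analogues of Propositions~\ref{prop:Dr-prop2.12}, \ref{prop:Dr-prop2.13}, and \ref{prop:Dr-prop2.14} in which condition (i) is replaced by
\begin{quote}
(i$'$) the map $f$ arises from a lisse sheaf over every CM curve $C$ with $C\times_Z Y$ connected,
\end{quote}
and then assemble them exactly as in the proofs of Theorems~\ref{thm:Dr-thm2.5} and \ref{thm:var of Dr-thm2.5}. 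The ``if" direction is the only content; the converse is built into the definition of $\LS_r$.

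For the CM analogue of Proposition~\ref{prop:Dr-prop2.12}, I would set $N=\lvert\GL_r(k_\lambda)\rvert$, replace $Z$ by $Z\otimes_\Z\Z[N^{-1}]$ to absorb the tameness needed for condition (ii), and then apply Lemma~\ref{lem:var of totally real elementary fibration}\,(ii) to obtain a chain of split fibrations $Z_n\to\cdots\to Z_1$ with $Z_1$ a CM curve such that $Z_1\times_Z Y$ is connected. Condition (i$'$) gives $f|_{Z_1}\in\LS_r(Z_1)$; an iterated application of Lemma~\ref{lem:induction in Dr-lem3.1} along the chain yields a dense open $U\subset Z$ on which $f|_U$ has a kernel. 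For the CM analogue of Proposition~\ref{prop:Dr-prop2.13}, if $H\subset\pi_1(Z)$ witnesses that $f$ has a kernel, I apply Theorem~\ref{thm:Dr-thm2.15 for CM} to the input $(Y\sqcup Y', S, (l_s))$, where $Y'$ is a Galois cover corresponding to a sufficiently small open $G_0\subset\pi_1(Z)/H$ as in the proof of Corollary~\ref{cor:Dr-prop2.17}; this produces a CM curve $C$ with $\pi_1(C)\to\pi_1(Z)/H$ surjective and $C\times_Z Y$ connected. By (i$'$), $\varphi^\ast(f)$ comes from a semisimple $\rho_C$ which, by the Chebotarev-type argument of Lemma~4.1 of \cite{MR3024821}, factors through $\pi_1(Z)/H$. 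The Faltings–Hermite–Chebotarev argument (as in the proof of Proposition~\ref{prop:Dr-prop2.13}) then propagates the identification $\det(1-t\rho_C(\Frob_x))=f_x(t)$ from $\varphi(C)$ to every closed point of $Z$, using another application of Theorem~\ref{thm:Dr-thm2.15 for CM} for each test point.

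For the CM analogue of Proposition~\ref{prop:Dr-prop2.14}, I would follow the Zariski–Nagata purity argument verbatim: if the semisimple $\calE_U$ fails to extend to $Z$, then there is a divisor $D\subset Z\setminus U$ along which it is ramified, and a specialization argument (Corollary~5.2 of \cite{MR3024821}) produces a closed point $z\in Z\setminus U$ and a (possibly non-horizontal) tangent line $l\subset T_zZ$ with the obstruction property. Applying Theorem~\ref{thm:Dr-thm2.15 for CM} with the datum $(Y, \Ker\rho_U, \{z\}, l)$ yields a CM curve $C$ passing through $z$ tangent to $l$ with $C\times_Z Y$ connected and $\pi_1(\varphi^{-1}(U))\to \pi_1(U)/\Ker\rho_U$ surjective; by (i$'$) the pullback of $\calE_U$ to $\varphi^{-1}(U)$ is semisimple and extends across $c$, contradicting the obstruction. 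Smoothness of $Z$ is crucial here since $l$ need not be horizontal. Finally, exactly as in the proof of Theorem~\ref{thm:Dr-thm2.5}, the three CM propositions combine to give the separated case, and the general case is handled by covering $Z$ with open separated subschemes, passing to semisimplifications, and gluing along the nonempty common intersection.

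The main obstacle is ensuring that each time Theorem~\ref{thm:Dr-thm2.15 for CM} is invoked the side constraint ``$C\times_Z Y$ connected" can be imposed \emph{simultaneously} with the tangent-line and subgroup-surjectivity constraints; this is exactly what is achieved by enlarging the generically \'etale input from $Y$ to $Y\sqcup Y'$ (or similar disjoint unions) and is the reason Theorem~\ref{thm:Dr-thm2.15 for CM} was formulated with such flexible input. Once this bookkeeping is made explicit, the remaining arguments are transported from Drinfeld's framework without incident. Regularity (as opposed to smoothness) would suffice for every step in which $S=\emptyset$ (cf.~Remark~\ref{rem:sm and sep assump for Dr-thm2.15}), but the tangent-line input to Proposition~\ref{prop:Dr-prop2.14} forces us to retain the smoothness hypothesis that appears in the statement.
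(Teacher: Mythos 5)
Your proposal is correct and is essentially identical to the paper's (very terse) proof: the paper likewise states that CM variants of Propositions~\ref{prop:Dr-prop2.12}, \ref{prop:Dr-prop2.13}, and \ref{prop:Dr-prop2.14} are proved using Theorem~\ref{thm:Dr-thm2.15 for CM} and Lemma~\ref{lem:var of totally real elementary fibration}~(ii), and then assembled as in Theorems~\ref{thm:Dr-thm2.5} and \ref{thm:var of Dr-thm2.5}. You have simply made explicit the bookkeeping that the paper leaves to the reader.
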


\begin{proof}
We can prove variants of Propositions~\ref{prop:Dr-prop2.12}, \ref{prop:Dr-prop2.13}, and \ref{prop:Dr-prop2.14} for the CM case using Theorem~\ref{thm:Dr-thm2.15 for CM} and Lemma~\ref{lem:var of totally real elementary fibration} (ii). 
Then the theorem is deduced from them in the same way as Theorems~\ref{thm:Dr-thm2.5} and \ref{thm:var of Dr-thm2.5}.
\end{proof}

\section{Proofs of the main theorems}
\label{section:prf of main thm}
In this section, we prove theorems on the existence of the compatible system of a lisse sheaf.
Theorem~\ref{thm:main thm} concerns the totally real case and
Theorem~\ref{thm:main thm for CM} concerns the CM case.
Theorem~\ref{thm:main thm-special} in the introduction is proved after Theorem~\ref{thm:main thm}.
Following the discussion in the Subsection~2.3 of \cite{MR3024821}, we deduce these main theorems from Theorems~\ref{thm:var of Dr-thm2.5}, \ref{thm:Dr-thm2.5 for CM}, and theorems in \cite{MR1875184} and \cite{MR3152941}.

As we mentioned in the introduction, some of the assumptions in the main theorems come from the potential diagonalizability condition, which is introduced in \cite{MR3152941}. We first review this notion. See Section~1.4 of \cite{MR3152941} for details.

Let $L$ be a finite extension of $\Q_\ell$. Let $E_\lambda$ be a finite extension of $\Q_\ell$.
We say that an $\overline{E}_\lambda$-representation $\rho$ of $\Gal(\overline{L}/L)$ is \emph{potentially diagonalizable} if it is potentially crystalline and there is a finite extension $L'$ of $L$ such that $\rho|_{\Gal(\overline{L}/L')}$ lies on the same irreducible component of the universal crystalline lifting ring of the residual representation $\overline{\rho}|_{\Gal(\overline{L}/L')}$ with fixed Hodge-Tate numbers as a sum of characters lifting $\overline{\rho}|_{\Gal(\overline{L}/L')}$.

There are two important examples of this notion (see Lemma~1.4.3 of \cite{MR3152941}): 
Ordinary representations are potentially diagonalizable.
When $L$ is unramified over $\Q_\ell$, a crystalline representation
is potentially diagonalizable if for each $\tau\colon L\hra \overline{E}_\lambda$
the $\tau$-Hodge-Tate numbers lie in the range $[a_{\tau},a_{\tau}+\ell-2]$ for some integer $a_{\tau}$.

We first prove our main theorem for the totally real case.

\begin{thm}\label{thm:main thm}
Let $\ell$ be a rational prime.
Let $K$ be a totally real field and $X$ an irreducible smooth $\calO_K[\ell^{-1}]$-scheme with enough totally real curves. 
Let $E$ be a finite extension of $\Q$ and $\lambda$ a prime of $E$ above $\ell$.
Let $\calE$ be a lisse $E_\lambda$-sheaf on $X$ and $\rho$ the corresponding representation of $\pi_1(X)$.
Suppose that $\calE$ satisfies the following assumptions:
\begin{enumerate}
 \item The polynomial $\det (1-\Frob_xt,\calE_{\bar{x}})$ has coefficients in $E$ for every $x\in \lvert X\rvert$.
 \item For every totally real field $L$ and every morphism $\alpha\colon \Spec L\ra X$, 
the $E_\lambda$-representation $\alpha^\ast\rho$ of $\Gal(\overline{L}/L)$ is potentially diagonalizable at each prime $v$ of $L$ above $\ell$ and for each
$\tau\colon L\hra \overline{E}_\lambda$ it has 
distinct $\tau$-Hodge-Tate numbers.
 \item $\rho$ can be equipped with symplectic (resp.~ orthogonal) structure with multiplier $\mu\colon \pi_1(X)\ra E_\lambda^\times$ 
such that 
$\mu|_{\pi_1(X_K)}$ admits a factorization
\[
\mu|_{\pi_1(X_K)}\colon \pi_1(X_K)\lra\Gal(\overline{K}/K)\stackrel{\mu_K}{\lra} E_\lambda^\times
\]
with a totally odd (resp.~ totally even) character $\mu_K$.
 \item The residual representation $\overline{\rho}|_{\pi_1(X[\zeta_\ell])}$ is absolutely irreducible.
 \item $\ell\geq 2(\rank \calE+1)$.
\end{enumerate}
Then for each rational prime $\ell'$ and each prime $\lambda'$ of $E$ above $\ell'$ there exists a lisse $\overline{E}_{\lambda'}$-sheaf on $X[\ell'^{-1}]$ which is compatible with $\calE|_{X[\ell'^{-1}]}$.
\end{thm}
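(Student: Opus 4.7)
The plan is to apply Theorem~\ref{thm:var of Dr-thm2.5} (over a sufficiently large finite extension of $\Q_{\ell'}$ inside $\overline{E}_{\lambda'}$) to the polynomial-valued map $f$ on $X[\ell'^{-1}]$ of degree $r = \rank\calE$ defined by $f_x(t) = \det(1-\Frob_x t, \calE_{\bar x})$. By assumption (i) the values of $f$ lie in $E[t]$, and via the embedding $E \hookrightarrow \overline{E}_{\lambda'}$ determined by $\lambda'$ they lie in a finite extension of $\Q_{\ell'}$; enlarging the coefficient field if necessary, we view $f \in \widetilde{\LS}_r(X[\ell'^{-1}])$. It then suffices to choose a connected \'etale cover $Y \to X[\ell'^{-1}]$ and to verify conditions (i) and (ii) of Theorem~\ref{thm:var of Dr-thm2.5}.

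Condition (ii) follows from Lafforgue's theorem as packaged by Drinfeld: for any separated smooth curve $C$ over a finite field with a morphism $C \to X[\ell'^{-1}]$, the restriction $\calE|_C$ decomposes into irreducible lisse $E_\lambda$-sheaves on $C$, and applying \cite{MR1875184} (via Theorem~2.5 of \cite{MR3024821}) to each irreducible constituent yields compatible lisse $\overline{E}_{\lambda'}$-sheaves on $C$ whose direct sum realizes $f|_C$.

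For condition (i), let $\tilde X \to X[\zeta_\ell]$ be the connected \'etale cover corresponding to $\Ker(\bar\rho|_{\pi_1(X[\zeta_\ell])})$, and take $Y \to X[\ell'^{-1}]$ to be a connected \'etale cover Galois over $X[\ell'^{-1}]$ that dominates $\tilde X$. Given a totally real curve $C$ with fraction field $L$ and a morphism $\varphi\colon C \to X[\ell'^{-1}]$ such that $C \times_X Y$ is connected, the field $L$ is totally real and contains $K$, and the surjection $\pi_1(C) \twoheadrightarrow \Gal(Y/X[\ell'^{-1}])$ forces $\bar\rho|_{\pi_1(C[\zeta_\ell])}$ to have the same image as $\bar\rho|_{\pi_1(X[\zeta_\ell])}$, hence to be absolutely irreducible. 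The representation $\varphi^\ast\rho$ lifts through the quotient $\Gal(\overline L/L) \twoheadrightarrow \pi_1(C)$ to a representation of $\Gal(\overline L/L)$ which is potentially diagonalizable at each prime above $\ell$ with distinct $\tau$-Hodge-Tate numbers (assumption (ii), applied to the generic point $\Spec L \to X$), carries the polarized structure inherited from assumption (iii) whose multiplier is identified with $\mu_K|_{\Gal(\overline L/L)}$ via functoriality of $\pi_1$ (and which remains totally odd, resp.\ totally even, since each complex conjugation of $L$ restricts to one of $K$), and has rank bounded by $(\ell-1)/2$ by assumption (v). These are the hypotheses of Theorem~C of \cite{MR3152941}, which supplies a compatible lisse $\overline{E}_{\lambda'}$-sheaf on an open subscheme of $\Spec \calO_L$ containing $\varphi(C)$ and realizing $f|_C$.

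The main technical obstacle lies in step (i): choosing a single cover $Y$ whose connectedness against any admissible $C$ simultaneously forces the residual irreducibility hypothesis needed by BLGGT, while ensuring that the polarization and its totally odd or even character descend cleanly along $\Gal(\overline L/L) \to \Gal(\overline K/K)$. Once $Y$ is chosen to dominate $\tilde X$ and the polarization bookkeeping is completed via the complex conjugation observation, the conclusion is a direct combination of Theorem~\ref{thm:var of Dr-thm2.5} with the Lafforgue and Barnet-Lamb--Gee--Geraghty--Taylor inputs.
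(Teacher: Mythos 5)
Your overall strategy matches the paper's: reduce to Theorem~\ref{thm:var of Dr-thm2.5} over a large enough finite extension of $\Q_{\ell'}$, verify condition (i) via BLGGT on totally real curves that stay inert against a cover built from $\Ker\overline\rho|_{\pi_1(X[\zeta_\ell])}$, and verify condition (ii) via Lafforgue. Your treatment of condition (i) is essentially the paper's argument, including the observation that a totally real $L$ over $K$ has complex conjugations restricting to those of $K$, so the parity of the multiplier is preserved.

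There is, however, a genuine gap in your treatment of condition (ii). Writing $\varphi^\ast\calE$ on a curve $C$ over $\F_p$ as a direct sum of irreducibles and ``applying Lafforgue (via Drinfeld) to each irreducible constituent'' does not work: Th\'eor\`eme~VII.6 of \cite{MR1875184} applies to irreducible sheaves with \emph{finite-order determinant}, and the constituents $\calE_i$ of $\varphi^\ast\calE$ will typically not have this property, since $\calE$ lives on an arithmetic scheme rather than a variety over $\F_p$. One must write $\calE_i\cong\calF_i\otimes\alpha^\ast\calG_i$ with $\calF_i$ of finite-order determinant and $\calG_i$ a rank-one sheaf pulled back from $\Spec\F_p$. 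Lafforgue handles $\calF_i$, but one still needs a compatible $\overline{E}_{\lambda'}$-sheaf replacing $\calG_i$, which amounts to showing that the Frobenius eigenvalue $\beta_i$ of $\calG_i$ is an algebraic number that is a $\lambda'$-adic unit; without this, the candidate character of $\widehat{\Z}$ is not even continuous $\ell'$-adically. Algebraicity is easy, but the $\lambda'$-adic unit property is not an output of Lafforgue or of Drinfeld's Theorem~2.5 in this setting. The paper proves it by a separate argument that crucially re-uses BLGGT: it picks a closed point $x\in C$, produces (via Corollary~\ref{cor:Dr-prop2.17}) a \emph{totally real} curve $C'\to X$ passing through $\varphi(x)$ with $C'\times_XY$ connected, and then extracts the $\lambda'$-adic integrality of $\beta_i$ from the BLGGT compatible system on $C'$. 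This interaction between the finite-field curves (Lafforgue) and the totally real curves (BLGGT) is a key and non-routine step that your sketch does not address.

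A more minor point: to land in $\LS_r^M(X)$ for a fixed finite $M\supset\Q_{\ell'}$ one should pin down $M$ to have degree $r!$ over $E_{\lambda'}$ so that the Brauer obstruction of each $\overline{E}_{\lambda'}$-sheaf with $E_{\lambda'}$-valued Frobenius traces vanishes over $M$ (the paper invokes this on both curves over $\F_p$ and totally real curves, via Lemma~2.7 of \cite{MR3024821}); your phrase ``enlarging the coefficient field if necessary'' is hiding this, and one should check that a single $M$ works uniformly over all curves.
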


\begin{proof}
Replacing $X$ by $X[\ell'^{-1}]$, we may assume that $\ell'$ is invertible in $\calO_X$.
Let $r$ be the rank of $\calE$.
Take an arbitrary extension $M$ of $E_{\lambda'}$ of degree $r!$. 
By assumption (i), we regard the map $f\colon x\mapsto \det(1-\Frob_xt,\calE_{\bar{x}})$ as an element of $\widetilde{\LS}^{M}_r(X)$ via the embedding $E\hra E_{\lambda'}\hra M$.

We will apply Theorem~\ref{thm:var of Dr-thm2.5} to $f\in \widetilde{\LS}^{M}_r(X)$.
Here we use the prime $\ell'$ and the field $M$ (we used $\ell$ and $E_\lambda$ in Section~\ref{section:prf of Dr-thm2.5}).

First we show that the map $f$ satisfies condition (i) in Theorem~\ref{thm:var of Dr-thm2.5}.
Let $Y$ be the connected \'etale covering $Y\ra X[\zeta_{\ell}]$
that corresponds to $\Ker \overline{\rho}|_{\pi_1(X[\zeta_\ell])}$.
We regard $Y$ as a connected \'etale covering over $X$
via $Y\ra X[\zeta_{\ell}]\ra X$.
We will prove that this $Y\ra X$ satisfies condition (i).
Take any totally real curve $\varphi\colon C\ra X$ 
such that $C\times_{X}Y$ is connected.

To show that $\varphi^\ast (f)$ arises from a lisse $M$-sheaf on $C$,
it suffices to prove that there exists a lisse $\overline{E}_{\lambda'}$-sheaf on $C$ which is compatible with $\varphi^\ast\calE$;
this follows from Lemma~2.7 of \cite{MR3024821}. 
Namely, let $\rho_C'\colon \pi_1(C)\ra \GL_r(\overline{E}_{\lambda'})$ denote the semisimplification of the corresponding $\overline{E}_{\lambda'}$-representation. 
Since $\det\bigl(1-t\rho_C'(\Frob_x)\bigr)=f_x(t)\in E_{\lambda'}[t]$ for every closed point $x\in C$, the character of $\rho_C'$ is defined over $E_{\lambda'}$ by the Chebotarev density theorem.
It follows from $[M: E_{\lambda'}]=r!$ that the Brauer obstruction of $\rho_C'$ in the Brauer group $\Br(E_{\lambda'})$ vanishes in $\Br(M)$ and
$\rho_C'$ can be defined over $M$.
This means $\varphi^\ast(f)\in \LS_r^M(C)$.

We will construct a lisse $\overline{E}_{\lambda'}$-sheaf on $C$ which is compatible with $\varphi^\ast\calE$.
For this we will apply
Theorem C of \cite{MR3152941} to the $\overline{E}_\lambda$-representation 
$\varphi_L^\ast \rho$ of $\Gal(\overline{L}/L)$,
where $L$ denotes the fraction field of $C$ and
$\varphi_L\colon \Spec L\ra X$ denotes $\varphi|_{\Spec L}$.

We need to see that the Galois representation $\varphi_L^\ast \rho$ satisfies the assumptions in Theorem C.
By assumptions (ii) and (v) it remains to check that
\begin{enumerate}
 \item[(a)] $\varphi_L^\ast \rho$ can be equipped with symplectic (resp.~ orthogonal) structure with totally odd (resp.~ totally even) multiplier, and
 \item[(b)] the residual representation $(\varphi_L^\ast \overline{\rho})|_{\Gal(\overline{L}/L(\zeta_\ell))}$ is absolutely irreducible.
\end{enumerate}

Assumption (a) follows from assumption (iii).
To see (b), recall that $C\times_XY$ is connected.
Hence $C\times_XX[\zeta_{\ell}]$ is connected with fraction field $L(\zeta_\ell)$, and $C\times_XY\ra C\times_XX[\zeta_{\ell}]$ is a connected \'etale covering. It follows from the definition of $Y$ that
$\Image (\varphi_L^\ast \overline{\rho})|_{\Gal(\overline{L}/L(\zeta_\ell))}$
coincides with
$\Image\overline{\rho}|_{\pi_1(X[\zeta_\ell])}$, and thus 
$(\varphi_L^\ast \overline{\rho})|_{\Gal(\overline{L}/L(\zeta_\ell))}$ is absolutely irreducible by assumption (iv).

Hence by Theorem C of \cite{MR3152941} we obtain an $\overline{E}_{\lambda'}$-representation of $\Gal(\overline{L}/L)$. The proof of the theorem, which uses potential automorphy and Brauer's theorem, shows that this representation is unramified at each closed point of $C$, and thus
it gives rise to a lisse $\overline{E}_{\lambda'}$-sheaf on $C$
which is compatible with $\varphi^\ast\calE$.
Hence $f$ satisfies condition (i) in Theorem~\ref{thm:var of Dr-thm2.5}.

Next we show that $f$ satisfies condition (ii) in Theorem~\ref{thm:var of Dr-thm2.5}.
Let $C$ be a separated smooth curve
over $\F_p$ for some prime $p$ and 
denote the structure morphism $C\ra \Spec \F_p$ by $\alpha$.
Let $\varphi\colon C\ra X$ be a morphism.
Note that $p$ is different from $\ell$ and $\ell'$.

We write the semisimplification of $\varphi^\ast\calE$ as $\bigoplus_i\calE_i^{\oplus r_i}$, where $\calE_i$ are distinct irreducible lisse $\overline{E}_\lambda$-sheaves on $C$.
Then there exist an irreducible lisse $\overline{E}_\lambda$-sheaf $\calF_i$ on $C$ and a lisse $\overline{E}_\lambda$-sheaf $\calG_i$ of rank 1 on $\Spec \F_p$ such that $\calF_i$ has determinant of finite order and $\calE_i\cong \calF_i\otimes \alpha^\ast\calG_i$ (see Section I.3 of \cite{MR601520} or Section 0.4 of \cite{MR3024820} 
for example).

By Th\'{e}or\`{e}m VII.6 of \cite{MR1875184}, for each closed point $x\in C$, the roots of $\det(1-\Frob_xt,\calF_{i,\bar{x}})$ are algebraic numbers that are $\lambda'$-adic units. Moreover, there exists an irreducible lisse $\overline{E}_{\lambda'}$-sheaf $\calF_i'$ on $C$ which is compatible with $\calF_i$.

We will show that there exists a lisse $\overline{E}_{\lambda'}$-sheaf $\calG'_i$ on $\Spec \F_p$ which is compatible with $\calG_i$.
Note that the lisse $\overline{E}_\lambda$-sheaf $\calG_i$ is determined by the value of the corresponding character of $\Gal(\overline{\F}_p/\F_p)$
at the geometric Frobenius. Denote this value by $\beta_i \in \overline{E}_\lambda^\times$. 
 It suffices to prove that $\beta_i$ is an algebraic number that is a $\lambda'$-adic unit.
Since the roots of $\det(1-\Frob_xt,\calE_{\bar{x}})$ and $\det(1-\Frob_xt,\calF_{i,\bar{x}})$ are all algebraic numbers,
so is $\beta_i$.

We prove that $\beta_i$ is a $\lambda'$-adic unit. 
To see this, take a closed point $x$ of $C$.
Then by Corollary~\ref{cor:Dr-prop2.17}
we can find a totally real curve $C'$ and a morphism $\varphi'\colon C'\ra X$ such that $\varphi(x)\in \varphi'(C')$ and  $C'\times_XY$ is connected.
As discussed before, Theorem C of \cite{MR3152941} implies that
there exists a lisse $\overline{E}_{\lambda'}$-sheaf on $C'$ whose Frobenius characteristic polynomial map is $\varphi'^\ast(f)$. 
Thus for each closed point $y\in C'$ the roots of $\varphi'^\ast(f)(y)$
are algebraic numbers that are $\lambda'$-adic units.
Considering a point $y\in \varphi'^{-1}(\varphi(x))$,
we conclude that some power of $\beta_i$ is a $\lambda'$-adic unit and thus so is $\beta_i$.
Hence there exists a lisse $\overline{E}_{\lambda'}$-sheaf $\calG'_i$ on $\Spec \F_p$ which is compatible with $\calG_i$.

The Frobenius characteristic polynomial map associated with the semisimple lisse $\overline{E}_{\lambda'}$-sheaf $\bigoplus_i(\calF'_i\otimes\alpha^\ast\calG'_i)^{\oplus r_i}$ is $\varphi^\ast(f)$.
As discussed before, this sheaf can be defined over $M$. 
Thus $f$ satisfies condition (ii) in Theorem~\ref{thm:var of Dr-thm2.5}.

Therefore by Theorem~\ref{thm:var of Dr-thm2.5}
there exists a lisse $M$-sheaf on $X$ which is compatible with $\calE$.
\end{proof}

\begin{proof}[Proof of Theorem~\ref{thm:main thm-special}]
All the discussions in the proof of Theorem~\ref{thm:main thm}
also work in this setting by using Remark~\ref{rem:var of Dr-thm2.5 for unramified case} instead of Theorem~\ref{thm:var of Dr-thm2.5}.
\end{proof}

We also have a theorem for the CM case.

\begin{thm}\label{thm:main thm for CM}
Let $\ell$ be a rational prime, $E$ a finite extension of $\Q$, and $\lambda$ a prime of $E$ above $\ell$.
Let $F$ be a CM field with $\zeta_\ell\not\in F$ and $Z$ an irreducible smooth $\calO_F[\ell^{-1}]$-scheme with geometrically irreducible generic fiber.
Let $\calE$ be a lisse $E_\lambda$-sheaf on $X$ and $\rho$ the corresponding representation of $\pi_1(Z)$.
Suppose that $\calE$ satisfies the following assumptions:
\begin{enumerate}
 \item The polynomial $\det (1-\Frob_xt,\calE_{\bar{x}})$ has coefficients in $E$ for every $x\in \lvert X\rvert$.
 \item For any CM field $L$ with $\zeta_\ell\not\in L$ and any morphism $\alpha\colon \Spec L\ra Z$, the $E_\lambda$-representation $\alpha^\ast\rho$ of $\Gal(\overline{L}/L)$ satisfies the following two conditions:
\begin{enumerate}
 \item $\alpha^\ast\rho$ is potentially diagonalizable 
       at each prime $v$ of $L$ above $\ell$ and 
       for each $\tau\colon L \hra \overline{E}_\lambda$
       it has distinct $\tau$-Hodge-Tate numbers.
 \item $\alpha^\ast\rho$ is totally odd and polarizable in the sense of 
Section 2.1 of \cite{MR3152941}.
\end{enumerate}
 \item The residual representation $\overline{\rho}|_{\pi_1(Z[\zeta_\ell])}$ is absolutely irreducible.
 \item $\ell\geq 2(\rank \calE+1)$.
\end{enumerate}
Then for each rational prime $\ell'$ and each prime $\lambda'$ of $E$ above $\ell'$ there exists a lisse $\overline{E}_{\lambda'}$-sheaf on $Z[\ell'^{-1}]$ which is compatible with $\calE|_{Z[\ell'^{-1}]}$.
\end{thm}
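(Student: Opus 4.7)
The plan is to mimic the proof of Theorem~\ref{thm:main thm}, with Theorem~\ref{thm:Dr-thm2.5 for CM} playing the role of Theorem~\ref{thm:var of Dr-thm2.5} and the CM-field potential automorphy results of \cite{MR3152941} replacing their totally real counterparts. Replacing $Z$ by $Z[\ell'^{-1}]$ we may assume $\ell'$ is invertible on $Z$. Let $r=\rank\calE$, fix an extension $M$ of $E_{\lambda'}$ of degree $r!$, and view the Frobenius characteristic polynomial map $f\colon x\mapsto \det(1-\Frob_xt,\calE_{\bar x})$ as an element of $\widetilde{\LS}^M_r(Z)$ via $E\hookrightarrow E_{\lambda'}\hookrightarrow M$. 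It suffices to verify conditions (i) and (ii) of Theorem~\ref{thm:Dr-thm2.5 for CM} for $f$.

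For condition (i), let $Y\to Z[\zeta_\ell]$ be the connected \'etale covering corresponding to $\Ker(\overline{\rho}|_{\pi_1(Z[\zeta_\ell])})$, regarded as a covering of $Z$. Given a CM curve $\varphi\colon C\to Z$ with fraction field $L$ such that $C\times_Z Y$ is connected, I would apply the CM-field version of Theorem C of \cite{MR3152941} to the Galois representation $\varphi_L^\ast\rho$ of $\Gal(\overline L/L)$. Its hypotheses are exactly what we have arranged: assumption (ii)(a) of the theorem gives potential diagonalizability and distinct Hodge-Tate numbers at primes above $\ell$, assumption (ii)(b) gives the totally odd polarizable structure, and assumption (iii), combined with the connectedness of $C\times_Z X[\zeta_\ell]$ (fraction field $L(\zeta_\ell)$) and of the covering $C\times_Z Y\to C\times_Z X[\zeta_\ell]$, shows that $(\varphi_L^\ast\overline{\rho})|_{\Gal(\overline L/L(\zeta_\ell))}$ is absolutely irreducible; assumption (iv) supplies the bound on $\ell$. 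Since the resulting $\overline{E}_{\lambda'}$-representation of $\Gal(\overline L/L)$ obtained via potential automorphy is unramified at every closed point of $C$, it corresponds to a lisse $\overline{E}_{\lambda'}$-sheaf on $C$ compatible with $\varphi^\ast\calE$. The Brauer obstruction argument used in Theorem~\ref{thm:main thm} (relying on $[M:E_{\lambda'}]=r!$ and Lemma~2.7 of \cite{MR3024821}) then shows $\varphi^\ast(f)\in\LS_r^M(C)$.

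For condition (ii), let $\varphi\colon C\to Z$ be a separated smooth curve over $\F_p$ with structure morphism $\alpha\colon C\to\Spec\F_p$. Writing the semisimplification of $\varphi^\ast\calE$ as $\bigoplus_i(\calF_i\otimes\alpha^\ast\calG_i)^{\oplus r_i}$ with $\calF_i$ irreducible with determinant of finite order and $\calG_i$ of rank one on $\Spec\F_p$, Lafforgue's theorem (Th\'eor\`eme~VII.6 of \cite{MR1875184}) produces an irreducible $\overline{E}_{\lambda'}$-compatible companion $\calF_i'$. It then remains to check that the scalar $\beta_i\in\overline{E}_\lambda^\times$ attached to $\calG_i$ is a $\lambda'$-adic unit; this is where one uses Theorem~\ref{thm:Dr-thm2.15 for CM} in place of Corollary~\ref{cor:Dr-prop2.17} to find a CM curve $C'\to Z$ passing through (a point above) a chosen closed point of $\varphi(C)$ and with $C'\times_Z Y$ connected. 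Applying part (i) of this proof to $C'$ shows that $\varphi'^\ast(f)$ has algebraic $\lambda'$-adic unit roots, whence a positive power of $\beta_i$, and so $\beta_i$ itself, is a $\lambda'$-adic unit. Assembling the $\calF_i'\otimes\alpha^\ast\calG_i'$ produces a lisse $\overline{E}_{\lambda'}$-sheaf compatible with $\varphi^\ast\calE$, and again the Brauer argument gives $\varphi^\ast(f)\in\LS_r^M(C)$.

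The main obstacle is step two: pinning down the exact formulation of the CM-field analogue of Theorem C of \cite{MR3152941} applicable here, and verifying that the residual-irreducibility, totally-odd, and polarizability hypotheses propagate from the global sheaf $\calE$ to each restriction $\varphi_L^\ast\rho$ along a CM curve. Once this is done, the remaining ingredients---the Moret-Bailly-based CM approximation (Theorem~\ref{thm:Dr-thm2.15 for CM}), the sheaf-reconstruction theorem (Theorem~\ref{thm:Dr-thm2.5 for CM}), and Lafforgue's companion theorem---slot in exactly as in the totally real case and give the conclusion.
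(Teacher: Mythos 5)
Your approach is essentially the same as the paper's; the proof reduces to Theorem~\ref{thm:Dr-thm2.5 for CM}, Lafforgue, and a CM potential automorphy input, exactly as you outline. Two small points are worth filling in. First, the ``CM-field analogue of Theorem~C'' that you leave unspecified is Theorem~5.5.1 of \cite{MR3152941}, which is the precise statement the paper invokes; your hypotheses (ii)(a), (ii)(b), and the residual irreducibility are arranged to feed directly into its assumptions. Second, before you can invoke assumption~(ii) for the representation $\varphi_L^\ast\rho$ along a CM curve $C$ with fraction field $L$, you must check $\zeta_\ell\notin L$, since assumption~(ii) only applies to such $L$. You observe that $C\times_Z Z[\zeta_\ell]$ is connected, which is exactly what is needed, but you stop short of drawing the conclusion: since $C\times_Z Z[\zeta_\ell]\cong C\otimes_{\calO_F}\calO_F[\zeta_\ell]$, its connectedness is equivalent to $L\cap F(\zeta_\ell)=F$, hence $\zeta_\ell\notin L$. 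The paper records precisely this remark; once it is in place your argument goes through unchanged.
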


\begin{proof}
In the same way as Theorem~\ref{thm:main thm}, the theorem is deduced from Theorem~\ref{thm:Dr-thm2.5 for CM}, Theorem~5.5.1 of \cite{MR3152941}, Th\'{e}or\`{e}m VII.6 of \cite{MR1875184}, and the following remark:
If $Y\ra Z[\zeta_\ell]$ denotes the connected \'etale covering defined by $\Ker \overline{\rho}_{\pi_1(Z[\zeta_\ell])}$ and $C$ is a CM curve with a morphism to $Z$ such that $C\times_ZY$ is connected, then $C\times_ZZ[\zeta_{\ell}]=C\otimes_{\calO_F}\calO_F(\zeta_\ell)$ is connected. In particular, the fraction field of $C$ does not contain $\zeta_{\ell}$.
\end{proof}


\begin{thebibliography}{99}


\bibitem[BLGGT]{MR3152941}
Thomas Barnet-Lamb, Toby Gee, David Geraghty, and Richard Taylor,
  \emph{Potential automorphy and change of weight}, Ann. of Math. (2)
  \textbf{179} (2014), no.~2, 501--609. 

\bibitem[Del1]{MR601520}
Pierre Deligne, \emph{La conjecture de {W}eil. {II}}, Inst. Hautes {\'E}tudes
  Sci. Publ. Math. (1980), no.~52, 137--252.

\bibitem[Del2]{MR768952}
\bysame, \emph{Preuve des conjectures de {T}ate et de {S}hafarevitch
  (d'apr{\`e}s {G}. {F}altings)}, Ast{\'e}risque (1985), no.~121-122, 25--41,
  Seminar Bourbaki, Vol. 1983/84. 

\bibitem[Del3]{MR3024820}
\bysame, \emph{Finitude de l'extension de {$\mathbb Q$} engendr{\'e}e par des
  traces de {F}robenius, en caract{\'e}ristique finie}, Mosc. Math. J.
  \textbf{12} (2012), no.~3, 497--514, 668. 

\bibitem[Dri]{MR3024821}
Vladimir Drinfeld, \emph{On a conjecture of {D}eligne}, Mosc. Math. J.
  \textbf{12} (2012), no.~3, 515--542, 668. 


\bibitem[Fal]{MR718935}
G.~Faltings, \emph{Endlichkeitss{\"a}tze f{\"u}r abelsche {V}ariet{\"a}ten
  {\"u}ber {Z}ahlk{\"o}rpern}, Invent. Math. \textbf{73} (1983), no.~3,
  349--366. 


\bibitem[KS]{MR2541032}
Moritz Kerz and Alexander Schmidt, \emph{Covering data and higher dimensional
  global class field theory}, J. Number Theory \textbf{129} (2009), no.~10,
  2569--2599.

\bibitem[Laf]{MR1875184}
Laurent Lafforgue, \emph{Chtoucas de {D}rinfeld et correspondance de
  {L}anglands}, Invent. Math. \textbf{147} (2002), no.~1, 1--241.

\bibitem[LZ]{LZ}
Ruochuan Liu and Xinwen Zhu,
\emph{Rigidity and Riemann-Hilbert correspondence for de Rham local systems}, 
arXiv:1602.06282.


\bibitem[MB]{MR1005158}
Laurent Moret-Bailly, \emph{Groupes de {P}icard et probl{\`e}mes de {S}kolem.
  {I}, {II}}, Ann. Sci. {\'E}cole Norm. Sup. (4) \textbf{22} (1989), no.~2,
  161--179, 181--194.


\bibitem[Ser]{MR2363329}
Jean-Pierre Serre, \emph{Topics in {G}alois theory}, second ed., Research Notes
  in Mathematics, vol.~1, A K Peters, Ltd., Wellesley, MA, 2008.

\bibitem[SGA4-3]{MR0354654}
Michael Artin, Alexandre Grothendieck, and Jean-Louis Verdier,
  \emph{Th{\'e}orie des topos et cohomologie {\'e}tale des sch{\'e}mas. {T}ome
  3}, Lecture Notes in Mathematics, Vol. 305, Springer-Verlag, Berlin-New York,
  1973.

\bibitem[Shi]{Shi}
Koji Shimizu,
\emph{Finiteness of Frobenius traces of a sheaf on a flat arithmetic scheme}, preprint, available at \url{http://www.math.harvard.edu/~shimizu/fin_Frob_traces.pdf}.


\bibitem[Tay]{MR1989198}
R.~Taylor, \emph{Galois representations}, Proceedings of the {I}nternational
  {C}ongress of {M}athematicians, {V}ol. {I} ({B}eijing, 2002), Higher Ed.
  Press, Beijing, 2002, pp.~449--474. 

\bibitem[Wie]{MR2268759}
G{{\"o}}tz Wiesend, \emph{A construction of covers of arithmetic schemes}, J.
  Number Theory \textbf{121} (2006), no.~1, 118--131. 

\end{thebibliography}
\end{document}